\def\l@subsection{\@tocline{2}{0pt}{2.5pc}{5pc}{}}
\newcommand{\id}{{\rm id}}
\newcommand{\TC}{{\sf TC}}
\newcommand{\tc}{{\sf TC}}
\newcommand{\cd}{{\rm cd}}
\newcommand{\cat}{{\sf cat}}
\newcommand{\secat}{{\sf secat}}
\newcommand{\height}{{\rm height}}
\newcommand{\E}{{\rm E}}
\newcommand{\D}{{\mathcal D}}
\newcommand{\quot}[2]{\left.\raisebox{.2em}{$#1$}\middle/\raisebox{-.2em}{$#2$}\right.}
\renewcommand{\setminus}{\smallsetminus}
\newcommand{\ZG}{\mathbb{Z}[G]}
\newcommand{\ZH}{\mathbb{Z}[H]}
\newcommand{\ZC}{\mathbb{Z}[C]}
\newcommand{\zgh}{\Z[G/H]}
\newcommand{\NN}{\mathbb{N}}
\newcommand{\ZZ}{\mathbb{Z}}
\newcommand{\Z}{\mathbb{Z}}
\newcommand{\C}{\mathcal{C}}
\newcommand{\im}{\mathrm{im}\,}
\newcommand{\Res}{\mathrm{Res}}
\newcommand*{\longhookrightarrow}{\ensuremath{\lhook\joinrel\relbar\joinrel\rightarrow}}
\DeclareMathOperator{\Hom}{Hom}
\DeclareMathOperator{\Ext}{Ext}
\DeclareMathOperator{\Img}{im}
\newcommand{\res}{\mathrm{Res}}
\newcommand{\Ind}{\mathrm{Ind}}
\newcommand{\vv}{\mathfrak{v}}
\newcommand{\Aut}{\mathrm{Aut}}
\newcommand{\ev}{\mathrm{ev}}
\newtheorem{theorem}{Theorem}[section]
\newtheorem{proposition}[theorem]{Proposition}
\newtheorem{lemma}[theorem]{Lemma}
\newtheorem{corollary}[theorem]{Corollary}
\newtheorem*{thmi}{Theorem}
\theoremstyle{definition}
\newtheorem{definition}[theorem]{Definition}
\theoremstyle{remark}
\newtheorem{example}[theorem]{Example}					
\newtheorem{remark}[theorem]{Remark}
\newtheorem{notation}[theorem]{Notation}
\numberwithin{equation}{section}
\begin{document}


\title[Sequential TC of aspherical spaces and sectional categories of subgroup inclusions]{Sequential topological complexity of aspherical spaces and sectional categories of subgroup inclusions}


\author[A. Espinosa Baro]{Arturo Espinosa Baro}
\address{Faculty of Mathematics and Computer Science,
	Adam Mickiewicz University, Umultowska 87, 60-479 Pozna\'n, Poland}
\email{arturo.espinosabaro@gmail.com, artesp1@amu.edu.pl}

\author[M. Farber]{Michael Farber}
\address{School of Mathematical Sciences \\
Queen Mary, University of London\\
London, E1 4NS\\
United Kingdom}
\email{m.farber@qmul.ac.uk}
\author[S. Mescher]{Stephan Mescher}
\address{Institut f\"ur Mathematik \\ Martin-Luther-Universit\"at Halle-Wittenberg \\ Theodor-Lieser-Strasse 5 \\ 06120 Halle (Saale) \\ Germany}
\email{stephan.mescher@mathematik.uni-halle.de}
\author[J. Oprea]{John Oprea}
\address{Department of Mathematics\\
Cleveland State University\\
Cleveland OH 44115  \\
U.S.A.}
\email{jfoprea@gmail.com}

\makeatletter
\@namedef{subjclassname@2020}{\textup{2020} Mathematics Subject Classification}
\makeatother

\subjclass[2020]{55M30 (68T40, 20J05)}
\keywords{Sectional category, sequential topological complexity, essential cohomology classes}

\begin{abstract}
We generalize results from topological robotics on the topological complexity (TC) of aspherical spaces to sectional categories of fibrations inducing subgroup inclusions on the level of fundamental groups. In doing so, we establish new lower bounds on sequential TCs of aspherical spaces as well as the parametrized TC of epimorphisms. Moreover, we generalize the Costa-Farber canonical class for TC to classes for sequential TCs and explore their properties. We combine them with the results on sequential TCs of aspherical spaces to obtain results on spaces that are not necessarily aspherical.
\end{abstract}

\maketitle

\setcounter{tocdepth}{1}

\tableofcontents

\section*{Introduction}

An autonomously functioning system in robotics operates a motion
planning algorithm which takes as input the initial and the desired
states of the system and produces as output a motion of the system from the initial to the desired
state. A topological approach to the robot motion planning problem was initiated in
\cite{Farber03}; the topological techniques explained relationships between instabilities
occurring in robot motion planning algorithms and topological features of robots'
configuration spaces. In this article we focus on the special case when the configuration space of the system is aspherical and analyse the arising topological and algebraic problems.

\subsection*{Sectional category and topological complexity} The \emph{sectional category} of a fibration $p:E \to B$ is given as the minimal value of $n$, for which  $B$ admits an open cover consisting of $n+1$ non-empty open sets, each with the property that $p$ admits a continuous local section over it. It was introduced under the name of \emph{genus} of a fibration by A. Schwarz in \cite{Schwarz66} and is usually denoted by $\secat(p:E \to B)$, or simply by $\secat(p)$. 

While sectional category can be seen as a generalization of the Lusternik-Schnirelmann category of a space, there is another special case of sectional category around whom a huge circle of ideas has been established in the past 20 years: the notion of topological complexity (TC). It has been introduced by M. Farber in \cite{Farber03}, see also \cite[Chapter 4]{Farber06b} or \cite{Farber08} for introductions to the subject and the introduction to \cite{FGLO17} for an overview of more recent developments. The TC of a path-connected topological space $X$ is defined as $\TC(X)=\secat(p)$, where $p$ denotes the path fibration
$$p: PX \to X \times X, \quad p(\gamma) = (\gamma(0),\gamma(1)),$$
with $PX=C^0([0,1],X)$. It is motivated by the field of topological robotics and it encodes the complexity of motion planning for autonomous robots in an arbitrary work space. 

An extension of TC is given by the notion of  \emph{sequential} or \emph{higher topological complexity}, which has been introduced by Y. Rudyak in \cite{RudyakHigher} and has been explored by various authors ever since, see e.g. \cite{BGRT}, \cite{GonzGrant} or \cite{GGY}. The $r$-th sequential topological complexity of a path-connected space $X$, where $r \geq 2$, is defined as $\TC_r(X)=\secat(p_r)$, where 
\begin{equation}
\label{EqIntrSeqTC}
p_r: PX \to X^r, \qquad p_r(\gamma) = (\gamma(0),\gamma(\tfrac{1}{r-1}),\dots,\gamma(\tfrac{r-2}{r-1}),\gamma(1)). 
\end{equation}
Obviously, it holds that $p_2=p$, the path fibration of $X$. It encodes a variation of the motion planning problem from topological robotics in which the robot is supposed to visit several pre-determined intermediate points along its way from start to end point.

\subsection*{Topological complexity of aspherical spaces} Both TC and its sequential version are homotopy invariants. Thus, the TC of an \emph{aspherical}, or Eilenberg-MacLane space, depends only on its fundamental group. It is a natural question to ask for a description of the TC of an aspherical space as an algebraic expression of its fundamental group. For the  Lusternik-Schnirelmann category of an aspherical space, such a description is given by the celebrated Eilenberg-Ganea theorem, see \cite{EG65}. 

The TC of aspherical spaces has received a lot of attention and has been studied from various perspectives. Complementary to computations for various classes of examples, see among others \cite{CohenPruidze}, \cite{GRM},  \cite{DranishNonor} and \cite{CVKlein}, there are also more general approaches to the problem. In \cite{FGLO17} methods and constructions from equivariant topology and Bredon cohomology are employed to produce new lower and upper bounds for the TC of aspherical spaces, while in \cite{FarMes20} the authors establish a spectral sequence from which a lower bound on the TC of an aspherical space can be derived.  

So far, less is known about the sequential TC of aspherical spaces. In \cite{FOSequ} the Bredon cohomology approach from \cite{FGLO17} is transferred to sequential TC yielding lower and upper bounds for sequential TC as well. There are also computations of sequential TCs of certain classes of aspherical spaces in the literature, see e.g. \cite{GGGHMR} for the case of a closed oriented surface. 

\subsection*{Sectional categories of fibrations inducing subgroup inclusions} As a more general question, one might investigate the sectional category of fibrations between aspherical spaces. It is well-known that the fibre homotopy type of such a fibration only depends on the homomorphism it induces on the level of fundamental groups. Using established properties of sectional category, this shows that the sectional category of a fibration between aspherical spaces depends only on its associated homomorphism of fundamental groups. Thus, one might expect a description of such a sectional category as an algebraic expression of its associated group homomorphism. 

 Since $PX$ has the homotopy type of $X$, the path fibration $p$ can be seen as an example of such a fibration in the case that $X$ is aspherical. One checks that if $X$ is of type $K(\pi,1)$ for some group $\pi$, then on fundamental group level, $p$ induces the diagonal inclusion 
 $$\pi \hookrightarrow \pi \times \pi, \quad g \mapsto (g,g).$$
More generally, for any $r \geq 2$, the fibration $p_r$ from \eqref{EqIntrSeqTC} induces the inclusion
$$\pi \hookrightarrow \pi^r, \quad g \mapsto (g,g,\dots,g),$$
between fundamental groups in the aspherical case. 

Motivated by these observations, the authors of  \cite{BCE22} study the sectional category of fibrations between aspherical spaces that induce subgroup inclusions between fundamental groups in loc. cit. in a systematic way. Their main algebraic tools are algebraically defined relative versions of Berstein-Schwarz classes that can be used to provide lower bounds on the corresponding sectional categories. 
As we shall see in the course of this article, studying sectional categories of fibrations inducing subgroup inclusions further subsumes the parametrized topological complexity of epimorphisms. The latter has been studied by M. Grant \cite{Grant22} and can be seen an an analogue of the TC of aspherical spaces in the framework of parametrized TC. This notion has been introduced by D. Cohen, M. Farber and S. Weinberger in \cite{CFW} and has been studied by several authors since then. 

\subsection*{Contents of this article.} The purpose of this article is twofold. On the one hand, we present  generalizations of the spectral sequence constructions from \cite{FarMes20} to sectional categories of fibrations inducing subgroup inclusions. Here, we only make use of methods from homological algebra and obtain a general lower bound for such sectional categories. We also generalize some of the main results of \cite{FarMes20} to sectional categories of subgroup inclusions. While a certain spectral sequence constructed therein can be used to derive lower bounds for the TC of aspherical spaces, we show how to generalize the construction to sectional categories of fibrations inducing subgroup inclusions. Eventually, we derive  the following lower bound for such sectional categories.

\begin{thmi}[Theorem \ref{TheoremSecatKappa}]
	Let $G$ be a geometrically finite group and let $H \leq G$ be a subgroup. Then
	$$\secat(H \hookrightarrow G) \geq \cd(G)-\kappa_{G,H},$$
	where $\kappa_{G,H} = \max \{\cd(H \cap xHx^{-1}) \ |\ x \in G \setminus H\}$.
\end{thmi}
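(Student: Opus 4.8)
The plan is to realize $\secat(H \hookrightarrow G)$ as the sectional category of the fibration $p \colon BH \to BG$ obtained by converting the map of aspherical spaces induced by the inclusion into a fibration; its homotopy fibre is the discrete coset space $G/H$, and geometric finiteness provides a finite-dimensional model for $BG$ with $\cd(G) =: N < \infty$. By Schwarz's characterization, $\secat(p) \le s$ if and only if the $(s+1)$-fold fibrewise join $E^{\ast(s+1)}_{BG} \to BG$ admits a section, so to prove $\secat(p) \ge N - \kappa$ (writing $\kappa := \kappa_{G,H}$) I would fix $s = N - \kappa - 1$ and show that the $(N-\kappa)$-fold fibrewise join admits no section, by producing a nonzero obstruction in top cohomological degree $N$.

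The algebraic input is organized around the reduced permutation module $\bar I = \ker(\zgh \xrightarrow{\epsilon} \Z)$ and the extension $0 \to \bar I \to \zgh \to \Z \to 0$, whose class is the relative Berstein--Schwarz class $\vv = \vv_{G,H} \in H^1(G;\bar I)$; note that $p^\ast \vv = \res^G_H \vv = 0$, since the extension splits after restriction to $H$. Writing $M_k = \bar I^{\oz k}$, I would first record the key vanishing statement. Tensoring the extension with $M_{k-1}$ and using the projection formula identifies the middle term with $\Ind_H^G \res^G_H M_{k-1}$, so that Shapiro's lemma gives $H^\ast(G;\zgh \oz M_{k-1}) \cong H^\ast(H;\res^G_H M_{k-1})$. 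The Mackey decomposition of $\res^G_H \zgh$, together with $\zgh \oz \zgh \cong \Z[G/H \times G/H] \cong \bigoplus_{HxH} \Ind_{H\cap xHx^{-1}}^G \Z$, shows that for $k \ge 2$ the module $\res^G_H M_{k-1}$ is a direct sum of modules induced from subgroups of the intersections $H \cap xHx^{-1}$ with $x \notin H$; by Shapiro and the definition of $\kappa$ this forces
$$H^j(G;\zgh \oz M_{k-1}) = 0 \qquad \text{for } j > \kappa,\ k \ge 2.$$
Feeding this into the long exact sequence of the tensored extension, whose connecting homomorphism is cup product with $\vv$, shows that
$$\cup\, \vv \colon H^{j}(G;M_{k-1}) \xrightarrow{\ \cong\ } H^{j+1}(G;M_k) \qquad (j \ge \kappa+1,\ k \ge 2).$$

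I would then assemble these groups into a spectral sequence adapted to the sectional-category (equivalently, join) filtration, generalizing the Farber--Mescher construction; its relevant page is built from the $H^\ast(G;M_\bullet)$, with the off-diagonal columns governed by the intersection subgroups. The isomorphisms above say that in the range $j \ge \kappa+1$ the top rows are linked by a chain of isomorphisms, so that a class in total degree $N$ can only be supported in filtration degree $\ge N-\kappa$: everything of lower filtration in total degree $N$ lies in the off-diagonal part, which vanishes above degree $\kappa$. The nonvanishing driving the argument is Berstein's theorem, $\mathfrak{b}_G^{N} \ne 0$ with $N = \cd(G)$, which supplies a nonzero top-degree class to be transported along this chain; since top-degree classes admit no higher correction (there is no cohomology above degree $N$), this certifies that the $(N-\kappa)$-fold fibrewise join has no section, giving $\secat(p) \ge N - \kappa$.

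The hard part will be twofold. First, one must set up the spectral sequence, verify its convergence, and establish the precise dictionary between its filtration degree and the sectional category, so that a surviving total-degree-$N$ class genuinely yields $\secat(p) \ge N - \kappa$ and not merely a cup-length estimate; indeed the isomorphisms above show that the naive powers $\vv^{k}$ already vanish for $k > \kappa$, so the cup-length of $\vv$ alone is far too weak and the spectral-sequence organization of the higher obstructions is essential. Second, the bottom of the induction, where the coefficient module is $\Z$ rather than a sum of off-diagonal inductions, behaves differently and must be handled separately in order to locate the nonzero class in the correct filtration; this is where the interplay between $\cd(G) = N$, the relation $\phi_\ast \mathfrak{b}_G = \vv$ for the surjection $\phi \colon I_G \twoheadrightarrow \bar I$ induced by $\ZG \twoheadrightarrow \zgh$, and the $\kappa$-vanishing has to be reconciled to pin down the surviving class.
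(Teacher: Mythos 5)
You have reconstructed the right skeleton — the tensor powers of the extension $0 \to \bar I \to \zgh \to \Z \to 0$, connecting homomorphism equal to cup product with the relative Berstein--Schwarz class, double-coset combinatorics producing the intersections $H \cap xHx^{-1}$, and an exact-couple filtration — but the proposal contains a genuine error at its central step: the ``key vanishing statement'' applies Shapiro's lemma in the wrong variable. Indeed $\zgh \oz M_{k-1} \cong \Ind_H^G \res^G_H M_{k-1}$, but Shapiro for cohomology in the \emph{coefficient} (covariant) variable requires \emph{co}induced modules; when $[G:H]=\infty$ one has $H^\ast(G;\Ind_H^G M) \not\cong H^\ast(H;M)$ in general. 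Concretely, take $G=\Z$, $H=\{1\}$, so $\kappa_{G,H}=0$: untwisting the diagonal action shows $\zgh \oz M_{k-1}$ is a free $\ZG$-module (of infinite rank), and since $H^1(\Z;\Z[\Z])\cong\Z\neq 0$ and $\Z$ is of type $FP_\infty$, one gets $H^1(G;\zgh\oz M_{k-1})\neq 0$, contradicting your claimed vanishing for $j>\kappa=0$. The paper avoids this by dualizing: it builds the exact couple from $E_0^{r,s}=\Ext^r_{\ZG}(\zgh\otimes I^s,A)$ and $D_0^{r,s}=\Ext^r_{\ZG}(I^s,A)$ for an \emph{arbitrary} auxiliary module $A$, uses Shapiro in the contravariant variable (Lemma \ref{LemmaIsomExt}: $\Ext^r_{\ZG}(\zgh\otimes M,N)\cong \Ext^r_{\ZH}(\widetilde M,\widetilde N)$, valid for induction at any index), and then decomposes $\widetilde I^{s}$ over $H$-orbits with stabilizers $N_C\leq H\cap xHx^{-1}$, $x\notin H$ (Theorem \ref{TheoremExtDecomp}), yielding $E_0^{r,s}=0$ for $r>\kappa_{G,H}$ and $s\geq 1$. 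Your Mackey decomposition is the right combinatorics, but it must be fed into $\Ext$ in the first variable, not into $H^\ast(G;-)$ with the module itself as coefficients.

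The second genuine error is the assertion that ``the naive powers $\vv^{k}$ already vanish for $k>\kappa$, so the cup-length of $\vv$ alone is far too weak.'' This is backwards: it is false in general (for $G=\pi\times\pi$, $H=\Delta_\pi$ with $\pi$ a hyperbolic surface group one has $\kappa=1$ yet $\omega^3\neq 0$), and the paper's conclusion is precisely that $\omega^{\cd(G)-\kappa}\neq 0$. The spectral sequence is not a substitute for the cup-length bound but the device that \emph{proves} it: a nonzero $u\in H^n(G;A)$, $n=\cd(G)$, lying in filtration $D^{n,0}_{n-\kappa}$ is by Proposition \ref{PropPushfwdBerstein} of the form $\psi_*(\omega^{n-\kappa}\cup\gamma)$, which forces $\omega^{n-\kappa}\neq 0$, whereupon $\secat(H\hookrightarrow G)\geq n-\kappa$ follows from Proposition \ref{PropSecatOmega}, i.e.\ from Theorem \ref{TheoremSecatInclProperties}.b) applied to the single class $\omega$; no fibrewise-join obstruction theory is needed at all. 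Your plan to transport $\mathfrak{b}_G^{N}\neq 0$ along a chain of isomorphisms into a section obstruction for the $(N-\kappa)$-fold join is exactly the content you defer as ``the hard part'' --- and as framed it rests on the false vanishing and the false cup-power claim; note also that pushing $\mathfrak{b}_G^{N}$ forward along $\phi^{\oz N}$ cannot work naively, since pushforwards kill classes and $\vv^{N}$ typically does vanish when $\kappa>0$. One of your instincts is nevertheless sound: the bottom of the filtration does behave differently, since for $s=0$ the obstruction group is $E_0^{n,0}\cong H^n(H;\widetilde A)$, not covered by the $\kappa$-vanishing, and entering filtration $1$ amounts to $u\in\ker[\iota^\ast:H^n(G;A)\to H^n(H;\widetilde A)]$ (Theorem \ref{TheoremFM}.b)); your proposal, however, offers no mechanism for this step.
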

The proof of this lower bound is given by entirely algebraic methods that do not involve any topological arguments at all. 

On the other hand, we carry out the consequences of this general result for sequential TC and approach the question of sequential TC of aspherical spaces from a more topological point of view. First of all, we show that in the case of sequential TC, the above lower bound can be rephrased in terms of cohomological dimensions of centralizers as in part a) of the following theorem.

\begin{thmi}
	Let $\pi$ be a geometrically finite group and let $r \in \NN$ with $r \geq 2$.Let $C(g)$ denote the centralizer of $g \in \pi$ and put $$k(\pi)=\max \{\cd(C(g)) \ | \ g \in \pi\setminus \{1\}\}.$$
	\begin{enumerate}[a)]
		\item (Theorem \ref{TheoremLowerSeqTC})\enskip Then
	$$\TC_r(K(\pi,1))\geq r \cdot \cd(\pi)-k(\pi).$$
	\item (Corollary \ref{CorSeqTClowerNonAsph}) \enskip Let $X$ be a connected locally finite CW complex with $\pi_1(X)=\pi$ whose universal cover is $(k-1)$-connected for some $k \in \NN$ with $\cd(\pi)\leq k$. Then 
$$	\TC_r(X)\geq r \cdot \cd(\pi)-k(\pi).$$
	\end{enumerate}
\end{thmi}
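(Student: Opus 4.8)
The plan is to derive part a) directly from the sectional-category bound of Theorem~\ref{TheoremSecatKappa} applied to the diagonal, and then to obtain part b) from part a) by exhibiting $K(\pi,1)$ as a homotopy retract of $X$.

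For part a), I would first record that, since $K(\pi,1)$ is aspherical, the fibration $p_r$ induces the diagonal inclusion on fundamental groups, so that
\[
\TC_r(K(\pi,1))=\secat\big(\Delta_r(\pi)\hookrightarrow \pi^r\big),\qquad \Delta_r(\pi)=\{(g,\dots,g)\mid g\in\pi\}.
\]
As $\pi$ is geometrically finite, so is $\pi^r$, and applying Theorem~\ref{TheoremSecatKappa} with $G=\pi^r$ and $H=\Delta_r(\pi)$ gives $\TC_r(K(\pi,1))\ge \cd(\pi^r)-\kappa_{\pi^r,\Delta_r(\pi)}$. Two inputs then finish the argument. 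The first is the product formula $\cd(\pi^r)=r\cdot\cd(\pi)$, which holds for geometrically finite groups via a Künneth argument producing a nonzero class in the top degree. The second, which is the crux, is the estimate $\kappa_{\pi^r,\Delta_r(\pi)}\le k(\pi)$: for $x=(x_1,\dots,x_r)\in\pi^r\setminus\Delta_r(\pi)$, the assignment $(h,\dots,h)\mapsto h$ identifies
\[
\Delta_r(\pi)\cap x\Delta_r(\pi)x^{-1}\ \cong\ \bigcap_{i=2}^{r}C(x_ix_1^{-1})\ \le\ \pi .
\]
Since $x\notin\Delta_r(\pi)$, some coordinate satisfies $x_j\ne x_1$, so $w:=x_jx_1^{-1}\ne 1$ and the intersection embeds in the centralizer $C(w)$ of a nontrivial element; hence its cohomological dimension is at most $\cd(C(w))\le k(\pi)$. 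Combining the two inputs yields $\TC_r(K(\pi,1))\ge r\cdot\cd(\pi)-k(\pi)$.

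For part b), the key observation is that the two hypotheses are exactly calibrated to make the classifying map split. Replacing $u\colon X\to K(\pi,1)$ by a fibration, its fibre is the universal cover $\widetilde X$, which is $(k-1)$-connected, so $\pi_i(\widetilde X)=0$ for $i\le k-1$. I would then lift $\id_{K(\pi,1)}$ to a section $s\colon K(\pi,1)\to X$ of $u$ by obstruction theory: the successive obstructions lie in $H^{i+1}(\pi;\pi_i(\widetilde X))$, and these vanish for $i\le k-1$ because the coefficients vanish, and for $i\ge k$ because $i+1\ge k+1>\cd(\pi)$ forces $H^{i+1}(\pi;-)=0$. Hence a section $s$ with $u\circ s\simeq \id_{K(\pi,1)}$ exists, exhibiting $K(\pi,1)$ as dominated by $X$.

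Finally, I would invoke the monotonicity of sequential topological complexity under domination: a homotopy retract has no larger $\TC_r$ than the ambient space, so $\TC_r(K(\pi,1))\le \TC_r(X)$, and part a) then gives $\TC_r(X)\ge r\cdot\cd(\pi)-k(\pi)$. I expect the main obstacle to be the obstruction-theoretic step: one must check that the two numerical hypotheses, $(k-1)$-connectivity of $\widetilde X$ and $\cd(\pi)\le k$, interlock so that every obstruction group is forced to vanish, and separately confirm that $\TC_r$ is monotone under domination for the locally finite CW spaces in question.
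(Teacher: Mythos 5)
Your proposal is correct. Part a) follows essentially the paper's own route, while part b) takes a genuinely different one, so let me compare. For a), you reproduce the paper's argument exactly: the identification $\TC_r(K(\pi,1))=\secat(\Delta_{\pi,r}\hookrightarrow \pi^r)$, an application of Theorem \ref{TheoremSecatKappa} with $G=\pi^r$, $H=\Delta_{\pi,r}$, and the centralizer computation $\Delta_{\pi,r}\cap x\Delta_{\pi,r}x^{-1}\cong \bigcap_{i\geq 2}C(x_ix_1^{-1})$, which is the content of Lemma \ref{LemmaKappaHigher} (you only establish $\kappa_{\pi^r,\Delta_{\pi,r}}\leq k(\pi)$, which is all the inequality needs; the paper also proves the reverse inequality). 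The one soft spot is your justification of $\cd(\pi^r)=r\cdot\cd(\pi)$: a naive K\"unneth argument does \emph{not} suffice, since $H^{n}(\pi;\ZZ[\pi])$ need not be free abelian and the tensor product of two nonzero abelian groups can vanish, so the cross product of top-degree classes can a priori be zero. For geometrically finite groups this product formula is a nontrivial theorem of Dranishnikov, and this is exactly what the paper cites \cite{Dranish19}; you should cite that result rather than gesture at K\"unneth. Since the formula is true under your hypotheses, this is a citation defect rather than a fatal gap.

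For b), the paper factors the bound through sequential $\D$-topological complexity: $\TC_r(X)\geq \TC^{\D}_r(X)$ by Theorem \ref{ThmTCDgeneral}.a), and $\TC^{\D}_r(X)=\TC_r(\pi)$ under precisely your two hypotheses by Lemma \ref{LemmaTCDconn} (imported from \cite{FGLOupper}). You instead show directly that $X$ dominates $K(\pi,1)$: the homotopy fibre of the classifying map is $\widetilde X$, the fibre is simply connected (being a universal cover), and the obstructions to a homotopy section lie in $H^{i+1}(\pi;\pi_i(\widetilde X))$, which vanish for $i\leq k-1$ by $(k-1)$-connectivity and for $i\geq k$ since $i+1>k\geq \cd(\pi)$; monotonicity of $\TC_r$ under domination (standard, proved exactly as for $r=2$ by pulling back local sections along $f^r$ and correcting with the domination homotopy) then gives $\TC_r(X)\geq \TC_r(\pi)$, and a) finishes. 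Your two numerical hypotheses do interlock as you hoped, and the argument is sound. What each approach buys: your route is self-contained, avoids the $\D$-TC machinery entirely, and does not even use local finiteness of $X$; the paper's route is immediate given the $\D$-TC apparatus it has already built, and additionally records the sharper intermediate equality $\TC^{\D}_r(X)=\TC_r(\pi)$, which the paper reuses in its later results relating $\TC_r$ and $\TC^{\D}_r$. Both deliver the same bound.
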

As an application, we will show that for a free amalgamated product of the form $\pi_1 *_H \pi_2$, where $H$ is malnormal both in $\pi_1$ and in $\pi_2$, we obtain that
$$\TC_r(K(\pi_1*_H \pi_2,1)) \geq r \cdot \cd(\pi_1 *_H \pi_2)-\max\{k(\pi_1),k(\pi_2)\}\qquad \forall r \geq 2.$$
We further introduce a generalization of the canonical cohomology class of a topological space, which has been introduced by A. Costa and M. Farber in \cite{Costa} in the study of lower bounds for TC. Our more general notion provides an analogue of this class for the $r$-th sequential TC of a space for any $r \geq 2$. We study its properties and link it to the algebraically defined relative Berstein-Schwarz classes in the case of sequential TC of aspherical spaces. In constrast to relative Berstein-Schwarz classes, canonical classes are defined for all, i.e. not necessarily aspherical topological spaces. Towards the end of the article, we will show how the results for aspherical spaces can be employed to derive results for not necessarily aspherical spaces as well, culminating in a proof of part b) of the above theorem.

\subsection*{Structure of the article} In Section 1 we recall various definitions and results on sectional categories with a focus on subgroup inclusions. Section 2 generalizes the notion of essential cohomology classes as introduced in \cite{FarMes20} for TC to the abovementioned situations involving subgroup inclusions. They are explored purely in terms of group cohomology and further used to compute the sectional category of the inclusion of a \emph{normal} subgroup. The abovementioned spectral sequence from \cite{FarMes20} is then generalized to fibrations inducing subgroup inclusions in Section 3. Its properties are studied in analogy with \cite{FarMes20} and it is shown that the spectral sequence encodes the properties of essential classes of subgroup inclusions. In Section 4 the spectral sequence is used to derive our main lower bound on sectional categories of fibrations inducing subgroup inclusions. These results generalize observations for TC that are implicitly contained, though not explicitly stated, in \cite{FarMes20}. 

We then turn our attention to the special cases of sequential and parametrized topological complexities in Section 5. We state and slightly simplify the corresponding lower bounds and present some simple consequences. Section 6 approaches sequential TC from a more topological point of view. We establish the notion of $r$-th canonical classes that are the sequential TC versions of Costa-Farber's construction for TC, i.e. the case of $r=2$, and show that they coincide with the respective relative Berstein-Schwarz classes in the aspherical case. In the final Section 7 we connect our previous results with the notion of sequential $\D$-topological complexity that was introduced in \cite{FOSequ} and use it to derive results on sequential TCs of spaces that are not necessaily aspherical.

\section*{Acknowledgements}

Michael Farber was supported by the EPSRC research grant EP/V009877/1.

Arturo Espinosa Baro has been supported by Polish National Science Center research grant UMO-2022/45/N/ST1/02814 and by a doctoral scholarship of Adam Mickiewicz University.  

The collaboration between Arturo Espinosa Baro and Stephan Mescher started at the research group \emph{Some problems in applied and computational topology} in February 2023 at Banach Center B\c{e}dlewo, which was organized by Wac\l{}aw Marzantowicz. A.E. and S.M. thank Prof. Marzantowicz and the Banach Center for initiating the collaboration that led to parts of this article. A.E. further thanks Martin-Luther-Universit\"at Halle-Wittenberg for its hospitality at a research visit in which part of this work was carried out.

The authors also wish to express their gratitude to Mark Grant for his many useful commentaries on an earlier version of the manuscript that helped improving the quality of this text.

\section{Preliminaries}
In this brief section, we recall various results and terminology for sectional categories and aspherical spaces.

\begin{definition}
\label{DefSecat}
	Let $X$ and $Y$ be topological spaces and let $f:X \to Y$ be continuous. The \emph{sectional category of $f$}, denoted by $\secat(f:X \to Y)$, or just $\secat(f)$, is given as the minimum number $n \in \NN$ for which there is an open cover $U_0,U_1,\dots,U_n$ of $Y$, such that for each $i \in \{0,1,\dots,n\}$ there exists a continuous map $s_i: U_i \to X$ for which $f \circ s_i$ is homotopic to the inclusion of $U_i$. If there is no finite open cover of $Y$ which has these properties, we put $\secat(f):= + \infty$.
\end{definition}

\begin{remark}
\begin{enumerate}[(1)]
	\item The sectional category of a fibration was originally defined under the name of \emph{genus} by A. Schwarz in \cite{Schwarz66}. Its generalization to arbitrary continuous maps was first considered by A. Fet in \cite{Fet} and by I. Berstein and T. Ganea in \cite{BersteinGanea}, see also \cite{ArkStrom} for a more general approach. If $f$ is a fibration in Definition \ref{DefSecat}, the maps $s_i$ can be required to be continuous sections of $f$, not only homotopy sections, without changing the value of $\secat(f)$.
	\item Note that our definition of $\secat(f)$ is the reduced version which differs from Schwarz's original definition by one, so that in the case that $f$ admits a global continuous homotopy section, $Y \to X$, by our definition $\secat(f)=0$, while by Schwarz's original definition, it would hold that $\secat(f)=1$. Likewise, we will use the reduced definition of Lusternik-Schnirelmann category as also considered in \cite{CLOT}. 
\end{enumerate}	
\end{remark}
In the following theorem, we collect various basic properties of sectional categories that we will use in the course of this article.

\begin{theorem}
\label{TheoremSecatProperties}
Let $X$ and $Y$ be topological spaces and let $f: X \to Y$. 
	\begin{enumerate}[a)]
		\item $\secat(f:X \to Y)\leq \cat(Y)$, the Lusternik-Schnirelmann category of $Y$.
		\item If $g:X \to Y$ is homotopic to $f$, then $\secat(f)=\secat(g)$.
		\item Let $k \in \NN$, If there are reduced cohomology classes $u_i \in \ker \left[f^*:\widetilde{H}^*(Y;A_i) \to \widetilde{H}^*(X;f^*A_i) \right]$, where $A_i$ is a local coefficient system over $Y$ for each $i \in \{1,2,\dots,k\}$, such that 
 $$u_1 \cup u_2 \cup \dots \cup u_k \neq 0 \in H^*(Y,A_1\otimes A_2 \otimes \dots \otimes A_k), $$
 then $$\secat(f) \geq k.$$
	\end{enumerate}
\end{theorem}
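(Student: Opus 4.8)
The plan is to prove the three parts of Theorem \ref{TheoremSecatProperties} using the covering-based definition of sectional category together with standard properties of the relative cup product.

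For part a), I would start by recalling that $\cat(Y)$, in the reduced convention, is the minimal $n$ such that $Y$ admits an open cover $V_0,\dots,V_n$ by sets each of which is contractible in $Y$ (i.e.\ the inclusion $V_i \hookrightarrow Y$ is nullhomotopic). The key observation is that if $V_i$ is contractible in $Y$ via a homotopy to a point $y_0 \in Y$, and if $Y$ is path-connected, then one may build a homotopy section of $f$ over $V_i$: choosing any preimage-type lift is unnecessary, since a nullhomotopy $V_i \times [0,1] \to Y$ from the inclusion to the constant map at $y_0$, combined with a single chosen point $x_0 \in X$, produces a map $s_i = x_0$ (constant) whose composite $f \circ s_i$ is the constant map at $f(x_0)$, which is homotopic to the inclusion of $V_i$ through the nullhomotopy (after adjusting the basepoint via path-connectedness of $Y$). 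Hence every $\cat$-categorical open set is also a sectional-category open set for $f$, giving $\secat(f) \leq \cat(Y)$. The mild subtlety here is matching basepoints, which path-connectedness of $Y$ handles.

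For part b), the plan is entirely formal: if $g \simeq f$, I would show that any open cover $U_0,\dots,U_n$ of $Y$ witnessing $\secat(f) \leq n$ also witnesses $\secat(g) \leq n$. Given a homotopy section $s_i : U_i \to X$ of $f$ over $U_i$, so that $f \circ s_i \simeq \iota_{U_i}$, the homotopy $g \simeq f$ gives $g \circ s_i \simeq f \circ s_i \simeq \iota_{U_i}$, so the same $s_i$ serves as a homotopy section of $g$. By symmetry the reverse inequality holds, so $\secat(f) = \secat(g)$.

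For part c), the cohomological lower bound, I would argue contrapositively using the standard relative-to-absolute cup product mechanism. Suppose $\secat(f) = m < k$, so there is an open cover $U_0,\dots,U_m$ of $Y$ with homotopy sections $s_i$ of $f$ over each $U_i$. The hypothesis $u_j \in \ker[f^*]$ means each $u_j$ pulls back to zero, and the crucial step is to lift each $u_j$ to a relative class $\bar u_j \in H^*(Y, U_{i} ; A_j)$ on each categorical piece: because $f \circ s_i \simeq \iota_{U_i}$, the restriction $u_j|_{U_i}$ factors through $f^*$ on $U_i$ and hence vanishes, so $u_j$ lifts to a relative class over the pair $(Y, U_i)$. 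Taking the product of these relative lifts over the $m+1$ sets $U_0,\dots,U_m$ and using the naturality of the relative cup product $H^*(Y,U_0) \otimes \dots \otimes H^*(Y,U_m) \to H^*(Y, U_0 \cup \dots \cup U_m) = H^*(Y,Y) = 0$, one concludes that a product of $m+1$ of the classes $u_j$ vanishes; in particular $u_1 \cup \dots \cup u_k$, which is divisible by such a product since $k \geq m+1$, must be zero, contradicting the hypothesis. The main obstacle, and the part requiring the most care, is precisely the construction of the relative lifts $\bar u_j$ over $(Y,U_i)$ from the vanishing of $u_j|_{U_i}$ and the compatibility of the local coefficient systems under $f^*$; this is the local-coefficient analogue of Schwarz's classical product inequality, and one must track the tensor products $A_1 \otimes \dots \otimes A_k$ of coefficient systems correctly through the relative cup product.
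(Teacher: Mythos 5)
The paper states this theorem without proof, presenting it as a collection of standard facts going back to Schwarz, so there is no internal argument to compare against; your proposal supplies exactly the classical proofs. All three parts are correct and are the standard arguments: constant maps serve as homotopy sections over categorical open sets (with the basepoint/path-connectedness caveat you rightly flag, which the paper's path-connected setting supplies), a homotopy $f\simeq g$ lets the same cover and sections witness both sectional categories since $g\circ s_i \simeq f\circ s_i \simeq \iota_{U_i}$, and for the cup-length bound the key identity $\iota_{U_i}^* u_j = s_i^* f^* u_j = 0$ yields relative lifts in $\widetilde{H}^*(Y,U_i;A_j)$ whose product lands in $H^*(Y, U_0\cup\dots\cup U_m; A_1\otimes\dots\otimes A_{m+1}) = H^*(Y,Y;\cdot) = 0$, forcing $u_1\cup\dots\cup u_k=0$ when $k\geq m+1$ — precisely Schwarz's relative cup-product mechanism, correctly adapted to local coefficients.
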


Let $G$ be a group. In the following we will say that a space $X$ is of type $K(G,1)$ if its homotopy groups satisfy
$$\pi_i(X) \cong \begin{cases}
	G & \text{if } i = 1, \\
	\{0\} & \text{if } i \neq 1.
\end{cases}$$
By abuse of notation, we will also write $K(G,1)$ as a space instead of $X$ if our considerations are independent of the chosen space of type $K(G,1)$. 

\begin{definition}
	A group $G$ is called \emph{geometrically finite} if there exists a finite CW complex of type $K(G,1)$.
\end{definition}

By classical algebraic topology, for any two groups $G_1$ and $G_2$ there is a one-to-one correspondence between based homotopy classes of continuous maps $K(G_1,1)\to K(G_2,1)$ and group homomorphisms $G_1 \to G_2$, induced by associating with any continuous $f: K(G_1,1) \to K_2(G_2,1)$ the induced homomorphism $\pi_1(f)$ between the fundamental groups. 

\begin{definition}
Let $G_1$ and $G_2$ be groups and let $\varphi:G_1 \to G_2$ be a group homomorphism. We define the sectional category of $\varphi$ by 
$$\secat(\varphi:G_1 \to G_2) := \secat(f_\varphi:K(G_1,1) \to K(G_2,1)),$$
or just $\secat(\varphi)$, where $f_\varphi$ is a continuous map with $\pi_1(f_\varphi)=\varphi$. By Theorem \ref{TheoremSecatProperties}.b), $\secat(\varphi)$ is well-defined.

Given a group $G$ and a subgroup $H \leq G$, we further let 
$$\secat(H \hookrightarrow G)$$
denote the sectional category of the inclusion of $H$ into $G$ without naming the map explicitly.
\end{definition}
The sectional category of subgroup inclusions was introduced and studied by Z. B\l{}aszczyk, J. Carrasquel Vera and the first author in \cite{BCE22}. Since the cohomology of aspherical spaces can be computed purely algebraically in terms of group cohomology, the properties of sectional category from Theorem \ref{TheoremSecatProperties} can for sectional categories of subgroup inclusions be translated into algebraic statements.

	Both assertions of the following theorem are obtained straightforwardly as special cases of Theorem \ref{TheoremSecatProperties}.a) and c). One uses that by the Eilenberg-Ganea theorem, see \cite{EG65}, and later work of Stallings \cite{Stallings} and Swan \cite{Swan}, it holds for every geometrically finite group $G$ that 
	$$\cat(K(G,1)) = \cd(G),$$
	the cohomological dimension of $G$. See also \cite{Oprea15} for a lucid exposition of the Eilenberg-Ganea results.

 \begin{theorem}
\label{TheoremSecatInclProperties}
	Let $G$ be geometrically finite and let $\iota: H \hookrightarrow G$ be the inclusion of a subgroup. 
	\begin{enumerate}[a)]
		\item $\secat(\iota: H \hookrightarrow G) \leq \cd(G)$. 
		\item If there are reduced cohomology classes $u_i \in \ker[\iota^*:\widetilde{H}^*(G;A_i) \to \widetilde{H}^*(H,\res^G_H(A_i))]$, where $A_i$ is a left $\ZG$-module for each $i \in \{1,2,\dots,k\}$, which satisfy $u_1 \cup u_2 \dots \cup u_k \neq 0$, then 
 $$\secat(\iota:H \hookrightarrow G)\geq k.$$
	\end{enumerate}
\end{theorem}

\begin{remark}
\label{RemarkSecatInclCovering}
Note that for any group $G$ and any subgroup $H \leq G$, $\secat(H \hookrightarrow G)$ coincides with the sectional category of the covering map $K(H,1) \to K(G,1)$ associated with $H$. In particular, this provides an explicit fibration whose sectional category coincides with $\secat(H \hookrightarrow G)$.	
\end{remark}

We end this section by fixing some notational conventions that we will frequently use throughout the article. 

\begin{notation} Let $G$ be a group. 
\begin{itemize}
	\item We let $1 \in G$ denote the unit element and $\ZG$ the integer group ring of $G$. 
	\item We further let $\varepsilon:\ZG\to \ZZ$, $\varepsilon(\sum_{g \in G} n_g \cdot g) = \sum_{g \in G} n_g$, be the augmentation and $K= \ker \varepsilon$ be the augmentation ideal, seen as a left $\ZG$-module. 
	\item Given a subgroup $H \leq G$, we let $\ZZ[G/H]$ be the associated permutation module as a left $\ZZ[G]$-module. We further let $\sigma: \zgh \to \ZZ$, $\sigma (\sum_{x \in G/H} n_x \cdot x) = \sum_{x\in G/H}n_x$ denote its augmentation, and $I = \ker \sigma $ the corresponding augmentation ideal.

\item For any left $\ZG$-module $M$, we put $\widetilde{M}:= \res^G_H(M)$ for the left $\ZH$-module that is obtained via restriction of scalars from $\ZG$ to $\ZH$.
\item We always let $\otimes$ without any subscript denote the tensor product $\otimes_{\ZZ}$ of abelian groups. Given a  $\ZG$-module $M$ and $p \in \NN$ we denote the $p$-fold tensor power of $M$ by 
$$M^p := M^{\otimes p} := M \otimes M \otimes \dots \otimes M$$
and consider it as a $\ZG$-module with respect to the diagonal $G$-action on $M^p$.
\item Given two left $\ZG$-modules $M_1$ and $M_2$ we consider $\Hom_{\ZZ}(M_1,M_2)$, the set of group homomorphisms from $M_1$ to $M_2$, as a left $\ZG$-module via the diagonal $G$-action given by 
$$ (g\cdot f)(m) := gf(g^{-1}m) \qquad \forall m \in M_1, \ g \in G, \ f \in \Hom_{\ZZ}(M_1,M_2).$$
\end{itemize}
\end{notation}
Note that as free abelian group, 
$$K= \bigoplus_{g \in G} \ZZ\cdot (g-1), \qquad I = \bigoplus_{gH \in G/H}\ZZ \cdot (gH-H), $$
where $H \in G/H$ denotes the class of the unit element.


\section{Essential classes relative to subgroups}
\subsection{The definition of essential classes} Let $G$ be a group and let $i:K \hookrightarrow \ZG$ be the inclusion of the augmentation ideal. It is shown in \cite{DranishRudyak09} and discussed in \cite{BCE22} that we obtain a projective resolution of $\ZZ$ over $\ZG$ via
\begin{equation}
\label{EqProjRes}
\dots \to \ZG \otimes K^s \xrightarrow{p_s} \ZG \otimes K^{s-1} \to \dots \to \ZG \otimes K \xrightarrow{p_1} \ZG \xrightarrow{\varepsilon} \ZZ \to 0, 
\end{equation}
where 
$$p_s: \ZG \otimes K^s \to \ZG \otimes K^{s-1}, \quad p_s(x \otimes y \otimes z) = \varepsilon(x) \cdot i(y) \otimes z \quad \forall x \in \ZG, \ y \in K, \ z \in K^{s-1}.$$
In the following, we shall use this resolution in our study of cohomology groups of $G$. 

Let us recall the following definition from \cite{BCE22}, which will play a crucial role throughout the rest ot this article.

\begin{definition}
\label{DefBSrelative}
Let $H \leq G$ be a subgroup. We define the \emph{Berstein-Schwarz class of $G$ relative to $H$} as the class $\omega \in H^1(G;I)$ represented by the cocycle $$\xi\in \Hom_{\ZG}(\ZG\otimes K,I), \qquad \xi = \mu \circ (\varepsilon \otimes \id_K),$$
where $\mu\colon K \rightarrow I$ is induced by the canonical projection $G \rightarrow G/H$. 
\end{definition}

We also recall the description of the cup product in group cohomology with respect to the resolution \eqref{EqProjRes} provided in \cite[Proposition 1.6]{BCE22}. For cohomology classes $[a]\in H^p(G;A)$ and $[b]\in H^q(G;B)$ represented by cocycles $a\colon \ZG\otimes K^{ p}\to A$ and $b\colon\ZG\otimes K^{ q}\to B$, the cup product $[a]\cup [b]\in H^{p+q}(G;A\otimes B)$ is represented by the map \[\ZG\otimes K^{ p+q}\xrightarrow{\varepsilon\otimes\id}K^{p+q}\xrightarrow{\hat{a}\otimes\hat{b}} A\otimes B\] where $\hat{a}:K^{ p}\to A$ the $\ZG$-module homomorphism that induced in the diagram 
\[\begin{tikzcd}
	K^{p+1}\ar[r]&\ZG \otimes K^{p}\ar[d, "a"]\ar[r]&K^{ p}\ar[dl, "\hat{a}"] \\
	&A,
\end{tikzcd}\]
since $a$ vanishes on $K^{p+1}$, and analogously for $\hat{b}$. Under this characterization, the $n$-th power of the relative Berstein-Schwarz class $\omega^n\in H^n(G;I^{ n})$ is represented by the map \[\ZG\otimes K^{ n}\xrightarrow{\varepsilon\otimes\id} K^{ n}\xrightarrow{\mu^{ n}} I^{n}.\]
The next proposition relates the powers of relative Berstein-Schwarz classes to sectional category.

\begin{proposition}
\label{PropSecatOmega}
Let $H \leq G$ be a subgroup and let $\omega \in H^1(G;I)$ be the Berstein-Schwarz class of $G$ relative to $H$. Then
	$$\secat( H \hookrightarrow G) \geq \height(\omega) = \sup \{n \in \NN\ | \ \omega^n \neq 0 \}.$$
\end{proposition}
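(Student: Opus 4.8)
The plan is to deduce this proposition as a direct application of the cohomological lower bound for sectional category of subgroup inclusions, namely Theorem \ref{TheoremSecatInclProperties}.b). The key observation is that the relative Berstein-Schwarz class $\omega$ already lies in the kernel of the relevant restriction map, and its powers $\omega^n$ are precisely the kind of cup product that the theorem detects.

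First I would verify the vanishing condition: I claim that $\omega \in \ker[\iota^*:\widetilde{H}^1(G;I) \to \widetilde{H}^1(H;\res^G_H(I))]$. This is where the relative nature of the construction is essential. The class $\omega$ is represented by the cocycle $\xi = \mu \circ (\varepsilon \otimes \id_K)$, where $\mu:K \to I$ is induced by the projection $G \to G/H$. Upon restriction to $H$, the permutation module $\ZZ[G/H]$ contains the trivial summand $\ZZ\cdot H$ corresponding to the coset of the identity, and the composite $K \xrightarrow{\mu} I \hookrightarrow \zgh$ followed by restriction becomes a coboundary over $\ZH$ — intuitively because, as an $H$-map, the projection factors through the fixed coset, making the restricted cocycle cohomologically trivial. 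I would make this precise by exhibiting an explicit $\ZH$-module homomorphism whose coboundary equals $\res^G_H(\xi)$, or alternatively by invoking the corresponding computation already carried out in \cite{BCE22}, where the relative Berstein-Schwarz class is introduced precisely so that it restricts trivially to $H$.

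Next I would handle the powers. Set $n = \height(\omega)$, so that $\omega^n \neq 0$ by definition of height (assuming the supremum is attained; if $\height(\omega)=\infty$ the bound is vacuously satisfied, and if finite one takes the top nonzero power). By the explicit cup product formula recalled from \cite[Proposition 1.6]{BCE22}, the power $\omega^n$ is represented by $\ZG \otimes K^n \xrightarrow{\varepsilon \otimes \id} K^n \xrightarrow{\mu^{ n}} I^{ n}$, and is a class in $H^n(G;I^{ n})$. Taking $u_1 = u_2 = \dots = u_n = \omega$ in Theorem \ref{TheoremSecatInclProperties}.b) with all coefficient modules $A_i = I$, the condition $u_1 \cup \dots \cup u_n \neq 0$ is exactly the statement $\omega^n \neq 0 \in H^n(G;I^{ n})$, and each factor lies in the requisite kernel by the first step. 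The theorem then yields $\secat(\iota:H \hookrightarrow G) \geq n = \height(\omega)$.

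The main obstacle I anticipate is the first step — rigorously establishing that $\omega$ restricts to zero over $H$ and, more subtly, that each of the $n$ identical factors $\omega$ genuinely satisfies the kernel hypothesis simultaneously (which is automatic once the single-factor claim holds, since they are the same class). One must be careful that the reduced cohomology $\widetilde{H}^*$ in the hypothesis of Theorem \ref{TheoremSecatInclProperties}.b) matches the unreduced setting in which $\omega$ is naturally defined; since $\omega$ lives in positive degree, reduced and unreduced cohomology agree, so this is a non-issue. The genuinely load-bearing input is therefore the restriction computation, which I would either reproduce directly from the resolution \eqref{EqProjRes} or cite from \cite{BCE22}.
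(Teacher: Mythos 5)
Your proposal is correct and matches the paper's proof essentially verbatim: the paper likewise cites \cite[Lemma 2.6]{BCE22} for the fact that $\omega \in \ker \iota^*$ and then applies Theorem \ref{TheoremSecatInclProperties}.b) with $u_1 = \dots = u_k = \omega$ where $k = \height(\omega)$. Your extra care about attainment of the supremum and the reduced/unreduced distinction is harmless but unnecessary; the load-bearing restriction computation is exactly the citation to \cite{BCE22} that the paper itself uses.
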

\begin{proof}
Let $i: H \hookrightarrow G$ be the inclusion and let $k := \height(\omega)$. By \cite[Lemma 2.6]{BCE22}, it holds that $\omega \in \ker \iota^*$. Thus, it follows immediately from Theorem \ref{TheoremSecatInclProperties}.b) by taking $u_i = \omega$ for each $i \in \{1,2,\dots,k\}$.
\end{proof}
The role of relative Berstein-Schwarz classes for sectional categories of subgroup inclusions is analogous to the role of Berstein-Schwarz classes for Lusternik-Schnirelmann category and the role of the so-called canonical classes for topological complexity. The latter were introduced by A. Costa and M. Farber in \cite{Costa}. We shall discuss  below how canonical classes are related to relative Berstein-Schwarz classes.


The following lemma provides an alternative characterization of relative Berstein-Schwarz classes and is an analogue of \cite[Lemma 5]{Costa}.

\begin{lemma}
	Consider the short exact sequence of $G$-modules 
\begin{equation}
\label{EqShortExact}
	0 \to I \stackrel{i}\to \zgh \stackrel{\sigma}\to \ZZ \to 0
	\end{equation}
	and let $\delta: H^0(G;\ZZ) \to H^1(G;I)$ denote the Bockstein homomorphism	associated with that sequence. Then 
	$$\omega = \delta(1),$$
	where $1 \in H^0(G;\ZZ)$ is a generator.
\end{lemma}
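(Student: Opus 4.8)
The plan is to compute the connecting (Bockstein) homomorphism $\delta$ explicitly using the projective resolution \eqref{EqProjRes}, and to match the resulting cocycle with the cocycle $\xi = \mu \circ (\varepsilon \otimes \id_K)$ representing $\omega$. Writing $P_s = \ZG \otimes K^s$ for the resolution \eqref{EqProjRes} of $\ZZ$ over $\ZG$, each $P_s$ is free, hence projective, so applying $\Hom_{\ZG}(P_\bullet,-)$ to the short exact sequence \eqref{EqShortExact} yields a short exact sequence of cochain complexes
$$0 \to \Hom_{\ZG}(P_\bullet, I) \to \Hom_{\ZG}(P_\bullet, \zgh) \to \Hom_{\ZG}(P_\bullet, \ZZ) \to 0,$$
whose associated long exact sequence has $\delta$ as its connecting map. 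Since $P_0 = \ZG$, a $\ZG$-homomorphism $P_0 \to \ZZ$ is determined by its value on $1$, and the generator $1 \in H^0(G;\ZZ)$ is represented by the augmentation $\varepsilon \colon \ZG \to \ZZ$ itself.

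Next I would run the snake-lemma zig-zag starting from $\varepsilon$. The key choice is the lift of $\varepsilon$ along $\sigma$: I would take the canonical projection $q \colon \ZG \to \zgh$ induced by $G \to G/H$. This is $\ZG$-linear and satisfies $\sigma \circ q = \varepsilon$ (indeed $q(g) = gH$ and $\sigma(gH) = 1 = \varepsilon(g)$ on generators), so $q$ lifts the cocycle $\varepsilon$ to $\Hom_{\ZG}(P_0, \zgh)$. Applying the coboundary $d^0(f) = f \circ p_1$ of the middle complex gives $q \circ p_1 \colon \ZG \otimes K \to \zgh$. Using $p_1(x \otimes y) = \varepsilon(x)\cdot i(y)$ with $i \colon K \hookrightarrow \ZG$ the inclusion, and the fact that $q$ restricts to $\mu$ on $K$ (both send $g-1 \mapsto gH - H$), one computes
$$(q \circ p_1)(x \otimes y) = q\big(\varepsilon(x)\cdot y\big) = \varepsilon(x)\cdot \mu(y) = \mu\big((\varepsilon \otimes \id_K)(x \otimes y)\big) = \xi(x \otimes y).$$
Moreover $\sigma \circ (q \circ p_1) = \varepsilon \circ p_1 = 0$, since $\varepsilon(y) = 0$ for $y \in K$, so $q \circ p_1$ factors uniquely through $i \colon I \hookrightarrow \zgh$; its preimage in $\Hom_{\ZG}(P_1, I)$ is exactly $\xi$. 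By the definition of the connecting homomorphism, $\delta(1)$ is the class of this preimage, and comparing with the description of $\omega$ recalled before the lemma gives $\delta(1) = [\xi] = \omega$.

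The computation is a clean diagram chase, and I do not expect a genuine obstacle; the only points requiring care are identifying the correct lift $q$ (rather than an arbitrary one) and verifying the on-the-nose identity $q \circ p_1 = \xi$, which rests on the compatibility $q|_K = \mu$ together with the explicit formula for $p_1$. I would also note in passing that $d^0(\sigma \circ q) = d^0(\varepsilon) = \varepsilon \circ p_1 = 0$, which confirms that the construction is consistent with $\varepsilon$ being a cocycle and that the preimage in $\Hom_{\ZG}(P_1,I)$ is well-defined, matching the standard sign conventions for $\delta$ in degree zero.
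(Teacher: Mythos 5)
Your proof is correct and follows essentially the same route as the paper: both lift the cocycle $\varepsilon$ representing $1\in H^0(G;\ZZ)$ along $\sigma$ via the canonical projection $\ZG\to\zgh$ (your $q$, the paper's $\rho$), apply the coboundary to obtain $q\circ p_1=\mu\circ(\varepsilon\otimes\id_K)=\xi$ landing in $I$, and conclude $\delta(1)=[\xi]=\omega$. The explicit verifications you add (that $\sigma\circ q=\varepsilon$, that $q|_K=\mu$, and that $\varepsilon\circ p_1=0$ forces the factorization through $I$) are exactly the diagram chase the paper performs.
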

\begin{proof}
	Let $\rho: \ZG \to \zgh$ denote the homomorphism induced by the orbit space projection. We know that $\omega \in H^1(G;I)$ is represented by $f:\ZG \otimes K\to I$, $f= \rho \circ (\varepsilon \otimes \id_K)$. We consider the long exact sequence associated with 
	$$0 \to C^0(G;I) \to C^0(G;\zgh) \to C^0(G;\ZZ)\to 0.$$
	In terms of our resolution, its connecting homomorphism, i.e. the Bockstein homomorphism, is obtained via diagram chasing in 
	$$ \small{\begin{tikzcd}[cramped]
		0 \arrow[r]& \Hom_{\ZG}(\ZG,I) \arrow[d, "d_1^*"] \arrow[r, "i_*"]& \Hom_{\ZG}(\ZG,\zgh)\arrow[r, "\sigma_*"] \arrow[d, "d_1^*"] & \Hom_{\ZG}(\ZG,\ZZ)\ar[r] \arrow[d, "d_1^*"]& 0 \\
		0 \arrow[r]& \Hom_{\ZG}(\ZG\otimes K,I) \ar[r, "i_*"]& \Hom_{\ZG}(\ZG\otimes K,\zgh)\ar[r, "\sigma_*"] & \Hom_{\ZG}(\ZG\otimes K,\ZZ)\ar[r] & 0
	\end{tikzcd}} $$  
	The augmentation $\varepsilon \in \Hom_{\ZG}(\ZG,\ZZ)$ is a cocycle. By definition of the maps involved, it holds that $\varepsilon=\sigma \circ \rho$, i.e. $\sigma_*(\rho)=\varepsilon$. Diagram chasing shows that
	$$i_*(f)= d_1^*(\rho), \qquad $$
	so by definition of the Bockstein homomorphism, we obtain that 
	$$\delta(1) = \delta([\varepsilon]) = [f]=\omega.$$
	Here, one sees that $1=[\varepsilon]$ generates $H^0(G;\ZZ)$ as $\varepsilon(g)=1$ for each  $g \in G$.   
\end{proof}

By elementary homological algebra, tensoring the short exact sequence \eqref{EqShortExact} with a left $\ZG$-module $M$ that is $\ZZ$-free yields a short exact sequence of $\ZG$-modules 
\begin{equation}
\label{EqSESmodule}
0 \to I \otimes M \to \zgh \otimes M \to M \to 0
	\end{equation}
	with respect to the diagonal $G$-actions. In complete analogy with a statement for the canonical class observed in Section 3 of \cite{FarMes20}, we derive the following statement.
	
\begin{corollary}
	\label{CorBockstein}
	Let $M$ be a $\ZZ$-free left $\ZG$-module and let $u \in H^i(G;M)$, where $i \in \NN_0$. Consider the Bockstein homomorphism $\delta$ of the coefficient sequence
	$$0 \to I \otimes M \to \zgh \otimes M \to M \to 0.$$
	Then 
	$$\delta(u) = \omega \cup u \in H^{i+1}(G;I \otimes M).$$
\end{corollary}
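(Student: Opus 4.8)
The plan is to give a direct cochain-level computation with respect to the projective resolution \eqref{EqProjRes}, which renders both sides of the claimed identity completely explicit, rather than invoking in the abstract a general compatibility between connecting homomorphisms and cup products. The preceding lemma is the exact template: it computes $\omega = \delta(1)$ for the sequence \eqref{EqShortExact} by lifting the augmentation $\varepsilon$ to the orbit projection $\rho: \ZG \to \zgh$ and pushing it through one step of the resolution. I would run the analogous computation one degree higher and with coefficients twisted by $M$, and then match the resulting cocycle against the cup product formula recalled above.

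First I would fix a cocycle $a \in \Hom_{\ZG}(\ZG \otimes K^i, M)$ representing $u$. As in the discussion of the cup product, the cocycle condition $a \circ p_{i+1} = 0$ forces $a$ to vanish on $\im(p_{i+1}) = K^{i+1} = \ker(\varepsilon \otimes \id_{K^i})$, so that $a$ factors as $a = \hat a \circ (\varepsilon \otimes \id_{K^i})$ for a $\ZG$-module map $\hat a: K^i \to M$. This factorization is precisely the datum feeding the cup product formula, so securing it at the outset is what will eventually let the two computations meet.

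Next I would produce an explicit lift of $a$ along $\sigma \otimes \id_M: \zgh \otimes M \to M$. The natural candidate is $\tilde a(x \otimes z) = \rho(x) \otimes \hat a(z)$, with $\rho$ the orbit projection from the lemma. I would verify that $\tilde a$ is $\ZG$-equivariant, using the equivariance of $\rho$ and of $\hat a$, and that $(\sigma \otimes \id_M) \circ \tilde a = a$, using $\sigma \circ \rho = \varepsilon$. By the definition of the Bockstein homomorphism of \eqref{EqSESmodule} relative to the resolution, $\delta(u)$ is then represented by $\tilde a \circ p_{i+1}$, which lands in $I \otimes M$ because $a \circ p_{i+1} = 0$. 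A one-line computation, using $\rho \circ i = \mu$ on $K$, gives $\tilde a \circ p_{i+1}(x \otimes y \otimes z) = \varepsilon(x)\,\mu(y) \otimes \hat a(z)$.

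Finally I would compare with the cup product representative: by the formula recalled above, $\omega \cup u$ is represented by $(\mu \otimes \hat a) \circ (\varepsilon \otimes \id_{K^{i+1}})$, since $\widehat\omega = \mu$, and evaluating on $x \otimes y \otimes z$ yields the identical expression $\varepsilon(x)\,\mu(y) \otimes \hat a(z)$. The two cocycles coincide verbatim, so $\delta(u) = \omega \cup u$. I expect the only genuinely delicate point to be the bookkeeping: identifying $\im(p_{i+1})$ with $\ker(\varepsilon \otimes \id_{K^i})$ to obtain the factorization of $a$, and checking that the sign and ordering conventions in the Bockstein match those in the cup product formula — both of which are already pinned down by the fact that these same conventions produce $\omega = \delta(1)$ in the lemma. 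As an independent sanity check I would keep in reserve the conceptual route: invoke the standard compatibility of the connecting homomorphism of \eqref{EqSESmodule} with cup products and chase $1 \in H^0(G;\ZZ)$ through the corresponding square, using $\omega = \delta(1)$ and $1 \cup u = u$; since $1$ sits in degree $0$, no sign can intervene.
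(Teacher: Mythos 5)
Your proof is correct, but it takes a genuinely different route from the paper's. The paper disposes of the corollary in one line: invoking the standard compatibility of connecting homomorphisms with cup products from \cite[V.(3.3)]{Brown82}, it computes $\delta(u) = \delta(u \cup 1) = (-1)^i\, u \cup \delta(1) = (-1)^i\, u \cup \omega = \omega \cup u$, using the preceding lemma ($\omega = \delta(1)$) and graded commutativity to absorb the sign. That is precisely the ``conceptual route'' you keep in reserve --- with one caveat: your remark that ``no sign can intervene'' holds for your ordering $u = 1 \cup u$ with $\delta(1\cup u)=\delta(1)\cup u$, whereas the paper's ordering $u \cup 1$ does produce a $(-1)^i$, cancelled only by the final commutation; either way one needs the $\ZZ$-freeness of $M$ to keep the tensored sequence exact, which is exactly why that hypothesis appears in the statement. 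Your main argument is instead an explicit cochain-level verification on the resolution \eqref{EqProjRes}, and every step checks out: $\im(p_{i+1}) = K^{i+1} = \ker(\varepsilon \otimes \id_{K^i})$ since $K^i$ is $\ZZ$-free (the paper itself uses this factorization $a = \hat a \circ (\varepsilon\otimes\id)$ in its cup-product discussion); the lift $\tilde a = \rho \otimes \hat a$ is $\ZG$-equivariant and satisfies $(\sigma \otimes \id_M)\circ \tilde a = a$ via $\sigma\circ\rho = \varepsilon$; and $\tilde a \circ p_{i+1}(x\otimes y \otimes z) = \varepsilon(x)\,\mu(y)\otimes \hat a(z)$ coincides verbatim with the representative of $\omega \cup u$, since $\hat{\xi} = \mu$ for the cocycle $\xi$ defining $\omega$. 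Your Bockstein convention (no sign, $\delta[a] = [\tilde a \circ p_{i+1}]$) is indeed the one pinned down by the lemma's diagram chase, since your lift specializes at $i=0$ to the lemma's lift of $\varepsilon$ to $\rho$. What your computation buys: it is self-contained, avoids the appeal to Brown's abstract lemma and all sign and coefficient-swap bookkeeping (your representative lands directly in $I\otimes M$, in the order of the statement), and it is exactly the lift-and-chase style the paper itself deploys in the preceding lemma and again in the proof of Proposition \ref{Bocksteinev}; the cost is length where the paper needs one line.
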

\begin{proof}
It follows straight from \cite[V.(3.3)]{Brown82}	 and the graded commutativity of the cup product that
	$$\delta(u) = \delta(u \cup 1) = (-1)^i u \cup \delta(1)=(-1)^i u \cup \omega = \omega \cup u.$$
\end{proof}
We  let $\Hom_{\ZZ}(I^s,M)$ be equipped with the diagonal $G$-action and consider
$$\ev_s: I\otimes \Hom_{\ZZ}(I^{s+1},M) \to \Hom_{\ZZ}(I^s,M),\qquad  \ev( x \otimes f) = f(x \otimes \cdot),$$
which is seen to be a $\ZG$-homomorphism. The following statement and its proof are straightforward analogues and carried out along the lines of \cite[Proposition 7.3]{FarMes20}. 

\begin{proposition} \label{Bocksteinev}
	Let $A$ be a left $\ZG$-module. For any cohomology class $u \in H^r(G;\Hom_{\ZZ}(I^{s+1},A))$ one has
	$$\delta(u)	=-(\ev_s)_*(\omega \cup u), $$
	where $\delta$ is the Bockstein homomorphism associated with the short exact coefficient sequence
	\begin{equation}		
\label{EqDualSES}
	0\to \Hom_{\ZZ}(I^s,A) \stackrel{\sigma^*}\to \Hom_{\ZZ}(\zgh \otimes I^s,A) \stackrel{i^*}{\to} \Hom_{\ZZ}(I^{s+1},A) \to 0
	\end{equation}
	obtained by applying $\Hom_{\ZZ}(\cdot,A)$ to \eqref{EqSESmodule} in the case of $M=I^s$.
\end{proposition}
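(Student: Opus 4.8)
The plan is to prove the identity at the level of cochains for the resolution \eqref{EqProjRes}, exhibiting explicit representatives of $\delta(u)$ and of $(\ev_s)_*(\omega\cup u)$ that agree up to the stated sign; this mirrors the proof of \cite[Proposition 7.3]{FarMes20}. Represent $u$ by a cocycle $c\in\Hom_{\ZG}(\ZG\otimes K^r,\Hom_{\ZZ}(I^{s+1},A))$. As in the cup product description recalled above, $c$ vanishes on the image of $K^{r+1}\hookrightarrow\ZG\otimes K^r$ and hence factors as $c=\hat c\circ(\varepsilon\otimes\id)$ for a $\ZG$-homomorphism $\hat c\colon K^r\to\Hom_{\ZZ}(I^{s+1},A)$. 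Using the formula for $\omega\cup u$ together with the representative $\mu$ of $\omega$ from Definition \ref{DefBSrelative} and the definition of $\ev_s$, one reads off that $(\ev_s)_*(\omega\cup u)$ is represented by the cochain $\beta$ with $\beta(g\otimes\kappa\otimes z)(\zeta)=\hat c(z)(\mu(\kappa)\otimes\zeta)$, where $g\otimes\kappa\otimes z\in\ZG\otimes K^{r+1}$ with $\kappa\in K$, $z\in K^r$, and $\zeta\in I^s$.

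To compute $\delta(u)$ I must lift $c$ through the epimorphism $i^*$ of \eqref{EqDualSES} by a $\ZG$-homomorphism. The sequence \eqref{EqShortExact} splits over $\ZZ$ via $\pi_I\colon\zgh\to I$, $\pi_I(\theta)=\theta-\sigma(\theta)\cdot H$, but $\pi_I$ is not $G$-equivariant, so one cannot simply set $\tilde c(w)=c(w)\circ(\pi_I\otimes\id)$. Correcting for this by transporting along the $G$-action produces the $\ZG$-homomorphism determined on generators by
\[\tilde c(g\otimes z)(\theta\otimes\zeta)=\hat c(z)\big((\theta-\sigma(\theta)\,gH)\otimes\zeta\big),\]
and since $\pi_I\circ i=\id_I$ one checks $i^*\tilde c=c$. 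Feeding in the differential $p_{r+1}(g\otimes\kappa\otimes z)=i(\kappa)\otimes z$ and expanding $i(\kappa)=\sum_h n_h h$ with $\sum_h n_h=0$, the terms proportional to $\theta$ cancel and only the basepoint correction survives, giving
\[(\tilde c\circ p_{r+1})(g\otimes\kappa\otimes z)(\theta\otimes\zeta)=-\sigma(\theta)\,\hat c(z)(\mu(\kappa)\otimes\zeta).\]
The right-hand side is $\sigma^*$ applied to the cochain $e$ with $e(g\otimes\kappa\otimes z)(\zeta)=-\hat c(z)(\mu(\kappa)\otimes\zeta)$, so by definition of the connecting homomorphism $\delta(u)=[e]$.

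Comparing the two representatives gives $e=-\beta$ on generators, whence $\delta(u)=[e]=-[\beta]=-(\ev_s)_*(\omega\cup u)$, as claimed; this is precisely the evaluation-twisted analogue of Corollary \ref{CorBockstein}, and the minus sign is traceable to the subtracted basepoint term $-\sigma(\theta)\cdot H$ in $\pi_I$. The one genuinely delicate point, and the step I expect to require the most care, is the construction of the equivariant lift $\tilde c$: because the $\ZZ$-splitting $\pi_I$ is not $G$-equivariant, one has to transport it through the $G$-action and then verify, by a careful bookkeeping of that action through $p_{r+1}$, that the non-equivariant part drops out and leaves exactly the contraction against $\mu(\kappa)$ with the correct sign. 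The remaining manipulations are routine diagram chases in \eqref{EqProjRes}.
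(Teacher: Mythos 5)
Your proof is correct, but it takes a genuinely different route from the paper's. The paper never touches cochains for this proposition: it first invokes Corollary \ref{CorBockstein} to identify $\omega\cup u$ with the Bockstein $\beta(u)$ of the sequence obtained from \eqref{EqSESmodule} with $M=\Hom_{\ZZ}(I^{s+1},A)$, and then constructs an explicit morphism of short exact coefficient sequences --- with vertical maps $\ev_s$, the $\ZG$-homomorphism $F(xH\otimes f)(zH\otimes y)=f((zH-xH)\otimes y)$, and the identity, and with $-\sigma^*$ inserted in the bottom row to produce the sign --- so that naturality of connecting homomorphisms gives $\delta=-(\ev_s)_*\circ\beta$ at once. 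You instead compute both sides at the cochain level of the resolution \eqref{EqProjRes}: your factorization $c=\hat c\circ(\varepsilon\otimes\id)$ is justified because the image of $p_{r+1}$ is exactly $K^{r+1}\subset\ZG\otimes K^r$; your lift $\tilde c(g\otimes z)(\theta\otimes\zeta)=\hat c(z)\bigl((\theta-\sigma(\theta)\,gH)\otimes\zeta\bigr)$ is indeed $\ZG$-equivariant for the diagonal actions (using $\sigma(g'^{-1}\theta)=\sigma(\theta)$ and equivariance of $\hat c$) and satisfies $i^*\tilde c=c$ since $\sigma$ vanishes on $I$; and the cancellation $\sum_h n_h=0$ in $i(\kappa)$ correctly isolates $-\sigma(\theta)\,\hat c(z)(\mu(\kappa)\otimes\zeta)=\sigma^*(e)$, giving $\delta(u)=[e]=-(\ev_s)_*(\omega\cup u)$ under the paper's sign-free convention $d^*f=f\circ p$ (the same convention used in the paper's computation of $\omega=\delta(1)$). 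Incidentally, your remark that this ``mirrors'' \cite[Proposition 7.3]{FarMes20} is not quite accurate --- that proof, like the paper's, is the naturality argument --- but this does not affect correctness. What each approach buys: yours is self-contained (it effectively reproves Corollary \ref{CorBockstein} rather than citing it) and makes the provenance of the minus sign completely transparent, namely the basepoint correction $-\sigma(\theta)\,gH$ in the non-equivariant splitting; the paper's argument is shorter, reuses established structure, and replaces the equivariance-and-cancellation bookkeeping by the verification of two commuting squares, which is arguably less error-prone and generalizes more readily whenever a comparable map of coefficient sequences is available.
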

\begin{proof}
We first observe that \eqref{EqDualSES} is indeed short exact, since $I^s$, $\zgh \otimes I^s$ and $I^{s+1}$ are all $\ZZ$-free. Let $\beta:H^r(G;\Hom_{\ZZ}(I^{s+1},A))\to H^{r+1}(G;I \otimes \Hom_{\ZZ}(I^{s+1},A))$ denote the Bockstein homomorphism of the short exact coefficient sequence 
	$$0\to I \otimes \Hom_{\ZZ}(I^{s+1},A) \hookrightarrow \zgh \otimes \Hom_{\ZZ}(I^{s+1},A) \stackrel{\sigma \otimes \id }\to\Hom_{\ZZ}(I^{s+1},S) \to 0$$
	obtained by letting $M=\Hom_{\ZZ}(I^{s+1},A)$ in \eqref{EqSESmodule}.
	By Corollary \ref{CorBockstein}, it holds that
	$$\beta(u) = \omega \cup u.$$ 
	Thus, the claim immediately follows if we can show that $\delta = -(\ev_s)_* \circ \beta$. We first consider the homomorphism  
	$F: \zgh \otimes \Hom_{\ZZ}(I^{s+1},A) \to \Hom_{\ZZ}(\zgh \otimes I^s,A)$ given by $\ZZ$-linearly extending 
	$$(F(xH \otimes f))(zH \otimes y) = f((zH-xH)\otimes y) \qquad \forall x,z \in G, \ y \in I^s, \ f \in \Hom_{\ZZ}(I^{s+1},A).$$
	We compute that
	\begin{align*}
		(F(g \cdot (xH\cdot f)))(zH \otimes y) &= F(gxH\otimes (g \cdot f))(zH \otimes y)\\
		&= (g \cdot f)((zH-gxH)\otimes y) = g f((g^{-1}zH - xH)\otimes g^{-1}y) \\
		&= g (F(x\otimes f))(g^{-1}zH\otimes g^{-1}y) =  ( g \cdot F(x\otimes f))(zH \otimes y)
	\end{align*}
	for all $g,x,z\in G$, $y \in I^s$ and $f \in \Hom_{\ZZ}(I^{s+1},M)$. Hence, $F$ is a $\ZG$-homomorphism. Consider the following diagram with exact rows:
	$$ \begin{tikzcd}[cramped]
		0 \ar[r] & I \otimes \Hom_{\ZZ}(I^{s+1},M) \ar[d,"\ev_s"]\ar[r, "i \otimes \id"] & \zgh \otimes \Hom_{\ZZ}(I^{s+1},M) \ar[d, "F"] \ar[r, "\sigma \otimes \id"]& \Hom_{\ZZ}(I^{s+1},M)\ar[d, "\id"]\ar[r] & 0 \\
		0 \ar[r] &  \Hom_{\ZZ}(I^{s},M) \ar[r, "-\sigma^*"] &  \Hom_{\ZZ}(\zgh\otimes  I^{s},M) \ar[r,"i^*"]& \Hom_{\ZZ}(I^{s+1},M) \ar[r]& 0 
	\end{tikzcd} $$
	To show that the left-hand square of this diagram commutes, we compute for all $x,z\in G$, $y \in I^s$ and $f \in \Hom_{\ZZ}(I^{s+1},M)$ that
	$$((-\sigma^* \circ \ev_s)((xH-H)\otimes f))(zH\otimes y)=-\sigma(zH)\cdot f((xH-H)\otimes y)=- f((xH-H)\otimes y).$$
	and
	\begin{align*}
		&((F \circ (i \otimes \id))((xH-H)\otimes f))(zH\otimes y) = (F(xH\otimes f))(zH \otimes y) - (F(H\otimes f))(zH\otimes y)\\
		&=f((zH-xH)\otimes y) - f((zH-H)\otimes y) = -f((xH-H)\otimes y).
	\end{align*}
	Comparing the results shows the commutativity of the left-hand square. Concerning the right-hand square, we derive that
	\begin{align*}
		&((i^*\circ F)(xH \otimes f ))((zH-H)\otimes y) =  (F(xH \otimes f))(zH\otimes y) - (F(xH \otimes f))(H\otimes y) \\
		&= f((zH-xH)\otimes y)- f((H-xH)\otimes y) \\
		&= f((zH-H)\otimes y) = \sigma(xH) \cdot f((zH-H)\otimes y) = ((\sigma \otimes \id)(x \otimes f))((zH-H)\otimes y).
	\end{align*}
	Thus, the above diagram commutes. Considering the long exact cohomology sequences associated with the coefficient groups of the above diagram, the naturality of Bockstein homomorphisms shows that 
	$$-(\ev_s)_* \circ \beta  = \delta \circ \id_* = \delta,$$
	which we wanted to show. The claim immediately follows. Here, the additional sign stems from the fact that we have considered $-\sigma^*$ instead of $\sigma^*$ in the bottom row of the diagram. 
\end{proof}
We introduce some additional terminology which generalizes the notion of essential classes introduced in \cite{FarMes20}.

\begin{definition}
Let $n \in \NN$ and let $\alpha \in H^n(G;A)$ with $\alpha \neq 0$. We say that $\alpha$ is \textit{essential relative to $H$} if there exists a homomorphism of $\Z[G]$-modules $\varphi \colon I^{n} \rightarrow A$, such that \[ \varphi_*(\omega^n)=\alpha. \] 
\end{definition} 
\begin{remark}
\label{RemarkEssential}
Cohomology classes which are essential relative to subgroups are used to derive lower bounds on the sectional category of the corresponding subgroup inclusion: Assume that for some $n \in \NN$, there exists a class $u \in H^n(G;A)$ with $u \neq 0$ that is essential relative to $H$. By definition of essential classes, this requires that $\omega^n \neq 0 \in H^n(G;I^n)$, which in turn yields that $\secat(H \hookrightarrow G) \geq n$ by Proposition \ref{PropSecatOmega}. \medskip 
\end{remark}

\subsection{Essential classes relative to normal subgroups}

To close this section, let us consider the case of the inclusion of a normal subgroup. In this setting we can characterize essential classes relative to that subgroup as pullbacks of non-trivial classes in the cohomology of the quotient group through the homomorphism in cohomology induced by the quotient map. This is, in certain measure, a generalization of the ideas present in the case of the inclusion of the diagonal subgroup in abelian groups, as considered in \cite[Section 6]{FarMes20}.

\begin{proposition}
\label{PropSecatNormal}
Let $N \triangleleft G$ be a normal subgroup, put $Q := G/N$ for the quotient group and let $\pi: G \to Q$ denote the projection. 
\begin{enumerate}[a)]
\item Let $\omega \in H^1(G;I)$ be the Berstein-Schwarz class of $G$ relative to $N$ and let $\beta \in H^1(Q;I_Q)$ be the Berstein-Schwarz class of $Q$, where $I_Q \subset \ZZ[Q]$ denotes the augmentation ideal of $Q$. Then
$$\pi^*\beta = \omega.$$
	\item Let $A$ be a left $\ZZ[Q]$-module and let $n \in \NN$. A cohomology class $u \in H^n(G;\pi^*A)$ with $u \neq 0$ is essential relative to $N$ if and only if there exists $v \in H^n(Q;A)$ with $\pi^*v=u$.
\end{enumerate}
\end{proposition}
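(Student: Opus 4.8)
The plan rests on one structural observation: because $N$ is normal, the permutation module $\ZZ[G/N]$ is exactly $\ZZ[Q]$ with $G$ acting through $\pi$, so the relative augmentation ideal is $I = \pi^*I_Q$ and hence $I^n = \pi^*(I_Q^n)$ as $\ZG$-modules for every $n$. Throughout I read $\beta$ as the Berstein-Schwarz class of $Q$ relative to its trivial subgroup, so that by Definition \ref{DefBSrelative} (applied to $Q$ and $\{1\}$, where $\mu_Q=\id$) it is represented by $\varepsilon_Q\otimes\id_{I_Q}$ on the resolution \eqref{EqProjRes} for $Q$.

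For part a) I would compare the representing cocycles directly. The homomorphism $\pi$ induces a chain map $\Phi_\bullet$ from the resolution \eqref{EqProjRes} of $\ZZ$ over $\ZG$ to the one over $\ZZ[Q]$, lying over $\pi_*\colon\ZG\to\ZZ[Q]$; in degree one it is $\pi_*\otimes\bar\pi$, where $\bar\pi\colon K\to I_Q$ is the restriction of $\pi_*$, and the chain-map identity reduces to $\varepsilon_Q\circ\pi_*=\varepsilon$. Since $\pi^*$ on $H^1$ is precomposition with $\Phi_\bullet$, the class $\pi^*\beta$ is represented by $x\otimes y\mapsto\varepsilon(x)\,\bar\pi(y)$. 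On the other hand $\omega$ is represented by $x\otimes y\mapsto\varepsilon(x)\,\mu(y)$, and the decisive point is the identity $\mu(g-1)=gN-N=\pi(g)-1=\bar\pi(g-1)$, i.e. $\mu=\bar\pi$ as maps $K\to I_Q$. The two cocycles coincide, giving $\pi^*\beta=\omega$, and by multiplicativity of $\pi^*$ this upgrades to $\pi^*(\beta^n)=\omega^n$.

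For part b) the remaining ingredient is the universality of the absolute class $\beta$: for $n\geq 1$ every $v\in H^n(Q;A)$ equals $\psi_*(\beta^n)$ for some $\ZZ[Q]$-homomorphism $\psi\colon I_Q^n\to A$. This is immediate from \eqref{EqProjRes}, since a representing cocycle $c\colon\ZZ[Q]\otimes I_Q^n\to A$ factors as $\hat c\circ(\varepsilon_Q\otimes\id)$ with $\hat c\colon I_Q^n\to A$, and $\beta^n$ is represented by $\varepsilon_Q\otimes\id$. Given $u=\pi^*v$, I write $v=\psi_*(\beta^n)$ and pull back: naturality of $\psi_*$ under $\pi^*$ together with part a) gives $u=\psi_*(\omega^n)$, now reading $\psi$ as a $\ZG$-map $I^n\to\pi^*A$, so $u$ is essential relative to $N$. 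Conversely, if $u=\varphi_*(\omega^n)$ for a $\ZG$-homomorphism $\varphi\colon I^n\to\pi^*A$, I set $v:=\varphi_*(\beta^n)$ and obtain $\pi^*v=\varphi_*(\omega^n)=u$ by part a); the only thing to verify is that $v$ makes sense, i.e. that $\varphi$ is genuinely $\ZZ[Q]$-linear.

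That last check is where I expect the real content to sit, and it is also where normality of $N$ is used in an essential way. Both $I^n=\pi^*(I_Q^n)$ and $\pi^*A$ carry $G$-actions that factor through the epimorphism $\pi$, so $\ZG$-linearity of $\varphi$ reads $\varphi(\pi(g)\cdot x)=\pi(g)\cdot\varphi(x)$ for all $g\in G$; since $\pi$ is surjective, $\pi(g)$ exhausts $Q$, whence $\varphi$ is automatically $\ZZ[Q]$-linear and no descent computation is needed. Thus the two directions are perfectly dual, the only genuine inputs being the cocycle identity $\mu=\bar\pi$ of part a), the universality of $\beta$, and the surjectivity of $\pi$.
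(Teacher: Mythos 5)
Your proof is correct and follows essentially the same route as the paper's: part a) is proved by comparing representing cocycles on the standard resolutions \eqref{EqProjRes} via the chain map given factorwise by $\pi$ (the paper's $\pi_\#$, with $\mu=\bar\pi$ exactly as you note), and part b) combines universality of $\beta$ with naturality of $\pi^*$ with respect to coefficient homomorphisms, just as in the paper's diagram argument. You merely make explicit two points the paper leaves implicit, namely deriving universality of $\beta$ from the factorization of cocycles through $\varepsilon_Q\otimes\id$ (the paper cites the literature) and verifying that $\ZG$-linearity of $\varphi$ between modules pulled back along the surjection $\pi$ automatically yields $\ZZ[Q]$-linearity.
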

\begin{proof}
Throughout the proof, we will use the projective resolution $(\ZG\otimes K^*,p_*)$ of $\ZZ$ over $\ZG$ and the projective resolution $(\ZZ[Q]\otimes I_Q^*,p_*)$ of $\ZZ$ over $\ZZ[Q]$, both defined as in the beginning of this section, to compute the cohomology groups of $G$ and $Q$, respectively. By abuse of notation, we further denote the ring homomorphism induced by $\pi$ by $\pi:\ZG \to \ZZ[Q]$ as well.
	\begin{enumerate}[a)]
		\item By definition of the augmentation ideals and of $\pi$, it holds that $\pi(K) \subset I_Q$ and in the notation of Definition \ref{DefBSrelative}, we write  $\mu:= \pi|_K: K \to I_Q$. By factorwise applying $\pi$, one obtains a chain map 
		$$\pi_\#: \ZG \otimes K^* \to \ZZ[Q]\otimes I_Q^*,$$
This map induces a cochain map
	$$(\pi_\#)^*: \Hom_{\ZZ[Q]}(\ZZ[Q]\otimes I_Q^*,A) \to \Hom_{\ZG}(\ZG \otimes K^*,\pi^*A),$$
	which in turn induces the pullback map $$ \pi^*:H^*(Q;A)\to H^*(G;\pi^*A). $$

Denote the augmentations of $G$ and $Q$ by $\varepsilon_G$ and $\varepsilon_Q$, respectively. As shown in \cite{DranishRudyak09}, the Berstein-Schwarz class $\beta \in H^1(Q;I_Q)$ is then represented by the cocycle 
		$$f_\beta: \ZZ[Q]\otimes I_Q \to I_Q, \qquad f_\beta = \varepsilon_Q \otimes \id_{I_Q}.$$
	One easily checks that $\varepsilon_Q \circ \pi = \varepsilon_G$, so that 
	$$(\pi_\#)^*(f_\beta) (x)(y) = (\varepsilon_Q \circ \pi)(x) \cdot \pi_\#(y) = \varepsilon_G(x)\cdot \mu(y)= (\mu \circ (\varepsilon_G\otimes \id_K))(x \otimes y).$$
	for all $x \in \ZG$ and $y \in K$. By definition of $\omega$, it is represented by this latter cocycle. Passing to cohomology then shows that $\pi^*\beta =\omega$.

		\item  Assume that $u$ is essential relative to $N$, such that there exists a $\ZG$-homomorphism $\varphi:I^n \to \pi^*A$ with 
 $$u = \varphi_*(\omega^n)= \varphi_*((\pi^*\beta)^n) = (\varphi_* \circ \pi^*)(\beta^n).$$
  One easily checks that $\pi^*I_Q = I$ as $\ZG$-modules. Moreover, since we can view $\varphi:I_Q^n \to A$ as a $\ZZ[Q]$-homomorphism and since the diagram 
\begin{equation}
\label{EqCD}
\begin{CD}
 	H^n(Q;I_Q^n) @>{\varphi_*}>> H^n(Q;A)\\
 	@V{\pi^*}VV @V{\pi^*}VV\\ 
H^n(G;I^n) @>{\varphi_*}>>H^n(G;\pi^*A)
 \end{CD} 
 \end{equation}
 obviously commutes, we obtain that $u = \pi^*v$, where $v:= \varphi_*(\beta^n) \in H^n(Q;A)$.
 
 	Conversely, assume that there exists a class $v \in H^n(Q;A)$, for which $u=\pi^*v$. By the universality of Berstein-Schwarz classes, see \cite{DranishRudyak09}, there exists a $\ZZ[Q]$-homomorphism 
	$$\psi:I_Q^n \to A, $$
	such that $v = \psi_*(\beta^n)$. In fact, by definition of pullback modules, we can view $\psi$ as a $\ZG$-homomorphism $$\psi:I^n = \pi^*I_Q^n \to \pi^*A.$$
	Replacing $\varphi_*$ by $\psi_*$, the diagram corresponding to \eqref{EqCD} commutes as well, so we obtain that 
	$$u = \pi^*(\psi_*(\beta^n))= \psi_*(\pi^*(\beta^n))= \psi_*(\omega^n),$$
	hence $u$ is essential.
		\end{enumerate}
\end{proof}

We want to derive an estimate for $\secat(N \hookrightarrow G)$ from the previous proposition, for which we need to introduce another notion. Recently, Mark Grant defined the cohomological dimension $\cd(\phi)$ of a group homomorphism $\phi\colon G \to H$ to be the
maximum $k$ for which there exists some $H$-module $A$ so that $\phi^*\colon H^k(H;A) \to H^k(G;\phi^*A)$ is non-trivial. The first published account of the study of this new dimension is even more recent, see \cite{DK}. \medskip  

For the proof of the following theorem we recall that the LS-category of a map $f\colon X \to Y$ is the smallest integer $m$ for which there are $m+1$ open sets $U_0,\ldots, U_m$ which cover $X$ and such that each of the restrictions $f|_{U_j}$ is nullhomotopic. For a group
homomorphism $\phi\colon G \to H$, we write $\cat(\phi)$ for the category of the associated map of aspherical spaces
$K(G,1) \to K(H,1)$.

\begin{theorem}
\label{TheoremSecatNormal}
	Let $N \triangleleft G$ be a normal subgroup, put $Q := G/N$ for the quotient group and let $\pi: G \to Q$ denote the projection. Then 
	$$\cd(\pi:G \to Q) \leq \secat(N \hookrightarrow G) \leq \cd(Q).$$
	In particular, if $\pi^*:H^{\cd(Q)}(Q;A) \to H^{\cd(Q)}(G;\pi^*A)$ is non-zero for some $\Z[Q]$-module $A$, then $\secat(N \hookrightarrow G) = \cd(Q)$.
\end{theorem}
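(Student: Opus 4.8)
The plan is to sandwich $\secat(N \hookrightarrow G)$ between the two stated bounds by two independent arguments and then read off the final assertion. The lower bound I would extract directly from the theory of essential classes developed above, whereas for the upper bound I would identify $\secat(N \hookrightarrow G)$ with the Lusternik--Schnirelmann category of the map $\pi$ and estimate the latter. Note that the two halves have quite different flavours: the lower bound is purely cohomological and uses no finiteness, while the upper bound is genuinely geometric.

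For the lower bound $\cd(\pi) \leq \secat(N \hookrightarrow G)$, I would set $k = \cd(\pi\colon G \to Q)$, the case $k=0$ being trivial since $\secat \geq 0$ always. By definition of $\cd(\pi)$ there exist a $\ZZ[Q]$-module $A$ and a class $v \in H^k(Q;A)$ with $\pi^* v \neq 0$ in $H^k(G;\pi^*A)$. Setting $u := \pi^* v$, Proposition \ref{PropSecatNormal}.b) shows that the nonzero class $u$ is essential relative to $N$. Remark \ref{RemarkEssential} then forces $\omega^k \neq 0$ and hence, via Proposition \ref{PropSecatOmega}, yields $\secat(N \hookrightarrow G) \geq k$, as desired.

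For the upper bound I would first invoke Remark \ref{RemarkSecatInclCovering} to replace $\secat(N \hookrightarrow G)$ by the sectional category of the covering $j\colon K(N,1) \to K(G,1)$. Since $N$ is normal, the extension $1 \to N \to G \to Q \to 1$ realizes $j$ as the fibre inclusion of a fibration $\pi\colon K(G,1) \to K(Q,1)$ with fibre $K(N,1)$. The key observation is the identity $\secat(j) = \cat(\pi)$: for an open set $U \subseteq K(G,1)$, the inclusion $U \hookrightarrow K(G,1)$ admits a homotopy lift along $j$ if and only if $\pi|_U$ is nullhomotopic, the nontrivial direction being an application of the homotopy lifting property, which deforms $U$ into the fibre once its composite to $K(Q,1)$ has been contracted. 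Thus a categorical open cover witnessing $\cat(\pi)$ is precisely a cover witnessing $\secat(j)$; for the theorem I only need the resulting inequality $\secat(j) \leq \cat(\pi)$. Finally, the category of any map into $K(Q,1)$ is bounded above by $\cat(K(Q,1))$ (pull back a categorical cover of the target), and $\cat(K(Q,1)) = \cd(Q)$ by the Eilenberg--Ganea--Stallings--Swan theorem. Combining gives $\secat(N \hookrightarrow G) = \secat(j) \leq \cat(\pi) \leq \cd(Q)$.

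I expect the main obstacle to be this upper bound, specifically the clean identification $\secat(j) = \cat(\pi)$ and the passage to $\cd(Q)$: one must set up the fibration attached to the normal extension and verify the lifting criterion carefully, and one must be mindful that $Q = G/N$ need not be geometrically finite, so the equality $\cat(K(Q,1)) = \cd(Q)$ is invoked non-vacuously only when $\cd(Q)$ is finite (the bound being trivial otherwise). The final assertion is then immediate: if $\pi^*\colon H^{\cd(Q)}(Q;A) \to H^{\cd(Q)}(G;\pi^*A)$ is nonzero for some $\ZZ[Q]$-module $A$, then $\cd(\pi) \geq \cd(Q)$ by definition, and the chain $\cd(Q) \leq \cd(\pi) \leq \secat(N \hookrightarrow G) \leq \cd(Q)$ collapses to give $\secat(N \hookrightarrow G) = \cd(Q)$.
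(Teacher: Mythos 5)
Your proposal is correct and takes essentially the same route as the paper: the lower bound via Proposition \ref{PropSecatNormal} together with essentiality and Proposition \ref{PropSecatOmega}, and the upper bound via the identification $\secat(N \hookrightarrow G) = \cat(\pi)$ followed by $\cat(\pi) \leq \cat(K(Q,1)) = \cd(Q)$. The only cosmetic difference is that you verify the key identity $\secat(j) = \cat(\pi)$ by hand with the homotopy lifting property, whereas the paper obtains it by citing \cite[Proposition 9.18]{CLOT} applied to the homotopy pullback of the based path fibration over $K(Q,1)$ --- which is exactly the argument you spell out.
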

\begin{proof}
Put $k:= \cd(\pi:G \to Q)$ and let $A$ be a left $\Z[Q]$-module and $u \in H^k(Q;A)$ with $\pi^*u \neq 0$. Then, by Proposition \ref{PropSecatNormal}, $\pi^*u\in H^k(G;\pi^*A)$ is essential relative to $N$. This in particular yields  that $\omega^k \neq 0$, where $\omega \in H^1(G;I)$ denotes the Berstein-Schwarz class of $G$ relative to $N$ and it follows from Proposition \ref{PropSecatOmega} that $\secat(N \hookrightarrow G) \geq k$. 
	 
To show the other inequality we can argue through properties of Lusternik-Schnirelmann category as
found in \cite{CLOT}.  First, let's note that the exact sequence $$\{1\} \to N \stackrel{i}{\to} G \stackrel{\pi}{\to} Q \to \{1\}$$ gives a fibre sequence
$$K(N,1) \stackrel{i}{\to} K(G,1) \stackrel{\pi}{\to} K(Q,1)$$ where we have used the same notation for the space maps. A fibre
sequence arises as a homotopy pullback
$$\begin{tikzcd}
	K(N,1) \ar[r] \ar[d, swap, "i"] & P_0(K(Q,1)) \ar[d] \\
	K(G,1) \ar[r, "\pi"] & K(Q,1)
\end{tikzcd} $$
where $P_0(K(Q,1)) \to K(Q,1)$ is the based path space fibration. By \cite[Proposition 9.18]{CLOT}, because $P_0(K(Q,1))$ is
contractible, we have $\secat(i\colon N \hookrightarrow G) = \cat(\pi)$. But a standard property of the category of a map is that it is bounded above by both the category of its domain and the category of its codomain. 
Hence, \begin{equation} \label{catmapcdQ}
	\secat(i \colon N \hookrightarrow G) = \cat(\pi) \leq \cat(K(Q,1))=\cd(Q).
\end{equation} 
Assume now the hypothesis that $\pi^*:H^{\cd(Q)}(Q;A) \to H^{\cd(Q)}(G;\pi^*A)$ is non-zero for some $\Z[Q]$-module $A$. Then, by definition, $\cd(\pi) \geq \cd(Q)$. Combining this with the lower bound by $\cd(\pi)$ and with inequality \ref{catmapcdQ} shows that $$ \secat(N \hookrightarrow G) = \cd(\pi) = \cd(Q).$$
\end{proof}

\begin{remark}\label{rem:alt}
\begin{enumerate}[(1)]
\item Notice that if we assume $\cd(Q) \neq 2$ (and thus we remove the pathological case prescribed by the Eilenberg-Ganea conjecture) one could also argue in the proof of the upper bound of Theorem \ref{TheoremSecatNormal} as follows: observe first that, by \cite[Corollary 2.4]{BCE22}, it holds that $$ \secat(N \hookrightarrow G) \leq \dim E_{\left<N\right>}G$$ where $E_{\left<N\right>}G$ is the classifying space of the family of groups generated by $N$. Since $N$ is a normal subgroup of $G$, we derive from \cite[Corollary 4.22]{ArcinCisner17} that $\dim (E_{\left<N\right>}G)=\dim (K(Q,1))$, where $K(Q,1)$ is a classifying space of $Q$. Using the Eilenberg-Ganea theorem and Theorem \ref{TheoremSecatInclProperties}.a) we derive that
$$\secat(N \hookrightarrow G) \leq \dim(K(Q,1))= \cd(Q).$$ Combining this with the first inequality of Theorem \ref{TheoremSecatNormal} shows the claim.
\item Since $\cat(K(Q,1))=\cd(Q)$ for any $Q$ by the Eilenberg-Ganea theorem, there arose the natural conjecture that
$\cat(\phi)=\cd(\phi)$ for any homomorphism $\phi\colon G \to H$. This was disproved by T. Goodwillie using an infinitely generated 
group $G$. In \cite[Theorem 5.4]{DK} a finitely generated example was derived and we shall use this in Example \ref{exam:nonness}. 
\end{enumerate}
\end{remark}

\begin{example}\label{exam:nonness}
While the hypothesis that $\cd(\pi:G \to Q)=\cd(Q)$ on cohomology in Theorem \ref{TheoremSecatNormal} is sufficient to derive $\secat(N \hookrightarrow G) = \cd(Q)$, it is not necessary. We can see this using \cite[Theorem 5.4]{DK} as follows. We recall that in \cite{Bolotov}, D. Bolotov defined a closed manifold $M^4$ with fundamental group $\pi_1(M) = \Z * \Z^3$ for which, as shown in \cite{DK}, the pullback map $$ \mu^*\colon H^3(K(\Z * \Z^3,1);A) \to H^3(M;A)$$ is the zero homomorphism for
all $\Z * \Z^3$-modules $A$, where $\mu\colon M \to K(\pi,1)$ is a classifying map of the universal cover. The hyperbolization procedure of \cite{CD}
gives a closed aspherical manifold $W^4$ and a degree one map $\alpha\colon W \to M$ which induces a surjection of fundamental groups.
The surjective group homomorphism $G=\pi_1(W) \to \Z*\Z^3$ is then induced by the composition $$ W \stackrel{\alpha}{\to} M
\stackrel{\mu}{\to} K(\Z * \Z^3,1)=S^1 \vee T^3$$ and we have a map $\theta\colon W \to T^3$ given by the composition
$$W \stackrel{\alpha}{\to} M \stackrel{\mu}{\to} S^1 \vee T^3 \stackrel{c}{\to} T^3$$
where $c\colon S^1 \vee T^3 \to T^3$ collapses $S^1$. Abusing notation, the induced homomorphism of fundamental groups 
$\theta\colon G \to \Z^3$ is also a surjection. Letting $N=\ker\theta$, we have an exact sequence 
$\{1\} \to N \stackrel{i}{\to} G \stackrel{\theta}{\to} \Z^3 \to \{1\}$ .
In \cite[Theorem 5.4]{DK} it is shown that
$$\cat(\theta) =\cd(\Z^3)=3.$$
As in Remark \ref{rem:alt}, this means that $\secat(N \hookrightarrow G)=3$ as well. However, the fact that $\mu^*=0$ and that $\theta = c\circ \mu\circ \alpha$ shows that the map $\theta^*\colon H^3(\Z^3;A) \to H^3(G;A)$ is trivial for any coefficient module, hence $\cd(\theta) < 3$. 
\end{example}

\section{Forming the spectral sequence and deriving a lower bound}

In this section we will proceed to generalize the construction of a spectral sequence to sectional categories of subgroup inclusions that has been carried out for the topological complexity of aspherical spaces by the second and third authors in \cite[Section 7]{FarMes20}. In our setting, the spectral sequence fom \cite{FarMes20} corresponds to the choice of $G=\pi \times \pi$ and $H= \Delta_\pi$, for a given group $\pi$.  The steps of the construction are carried out in complete analogy with the corresponding parts of \cite{FarMes20} and  instead of giving individual references for each statement, we view this as a general reference to \cite[Section 7]{FarMes20}. The interested reader will have no difficulties in finding the analogous statements therein. 
 
\subsection{The construction of the spectral sequence} Let $G$ be a group, let $H \leq G$ be a subgroup and let $\omega \in H^1(G;I)$ be the Berstein-Schwarz class of $G$ relative to $H$.  Let $A$ be a left $\ZG$-module. Define the groups $$ E_0^{r,s} = \Ext^r_{\ZG}(\Z[G/H] \otimes I^s, A), \qquad D_0^{r,s} = \Ext^r_{\ZG}(I^s,A) \qquad \forall r,s\in \NN_0. $$

Let $i: I \hookrightarrow \zgh$ denote the inclusion. For each $s\in \NN$ the short exact sequence from \eqref{EqSESmodule} with $M=I^s$ yields a short exact sequence of $\ZG$-modules
\begin{equation}
	\label{ShortExactSeq}
	 0 \rightarrow I^{s+1} \xrightarrow{f_s} \Z[G/H] \otimes I^s \xrightarrow{g_s} I^s \rightarrow 0 ,
\end{equation}
where $f_s:I^{s+1} \to \zgh \otimes I^s$, $f_s := i \otimes \id_{I^s}$ and $g_s:\zgh\otimes I^s \to I^s$, $g_s(x \otimes y) = \sigma(x) \cdot y$.

For each $s$, the sequence in \eqref{ShortExactSeq} induces a long exact $\Ext$-sequence with coefficients in $A$, which is in the above notation given as
\begin{equation}
	\label{CompSpSe}
	\cdots \rightarrow E_0^{r,s} \xrightarrow{k_0} D_0^{r,s+1} \xrightarrow{i_0} D_0^{r+1,s} \xrightarrow{j_0} E_0^{r+1,s} \rightarrow \cdots
\end{equation} 
where 
\begin{itemize}
	\item $i_0:D^{r,s+1}_0\to D^{r+1,s}_0$ denotes the connecting homomorphism,
	\item $j_0: D^{r,s}_0 \to E^{r,s}_0$ is induced by $(g_s)^*: \Hom_{\ZG}(I^s,A) \to \Hom_{\ZG}(\zgh\otimes I^s,A)$,
	\item $k_0: E^{r,s}_0 \to D^{r,s+1}_0$ is induced by $(f_s)^*:\Hom_{\ZG}(\zgh \otimes I^s,A) \to \Hom_{\ZG}(I^{s+1},A)$.
	\end{itemize}
We put
$$ E_0 := \bigoplus_{r,s \in \NN_0} E_0^{r,s} = \bigoplus_{r,s \in \NN_0} \Ext^r_{\ZG}(\Z[G/H] \otimes I^s,A)$$ and $$ D_0 := \bigoplus_{r,s \in \NN_0} D_0^{r,s} = \bigoplus_{r,s \in \NN_0}\Ext^r_{\ZG}(I^s,A) $$ 
and consider the summandwise defined maps
$$i_0:D_0 \to D_0, \qquad j_0: D_0 \to E_0, \qquad k_0: E_0 \to D_0.$$
Together with these maps the groups $D_0$ and $E_0$ form an exact couple
\[ \begin{tikzcd}
	D_0 \arrow[rr, "i_0"] & & D_0. \arrow[dl, "j_0"] \\
	& E_0 \arrow[lu, "k_0"]
\end{tikzcd} \] 
For each $p \in \NN$ we denote its $p$-th derived exact couple as
	\[ \begin{tikzcd}
	D_p \arrow[rr, "i_p"] & & D_p \arrow[dl, "j_p"] \\
	& E_p \arrow[lu, "k_p"]
\end{tikzcd} \] where, for each $ p \in \NN$ the module $D_p^{r,s}$ is defined as the image of $p$ compositions of the map $i_0$, i.e. 
$$ D_p^{r,s} = \Img(i_{p-1} \colon D_{p-1}^{r-1,s+1} \rightarrow D_{p-1}^{r,s}) = \Img(\underbrace{i_0\circ  \cdots \circ i_0}_{p} \colon D_0^{r-p,s+p} \rightarrow D_0^{r,s})  $$ 
and, naturally, the module $\E_p^{\ast, \ast}$ is defined by taking cohomology with respect to the differential defined by the exact couple, that is $$ E_p^{\ast, \ast} = H^*(E_{p-1}^{\ast, \ast}, d_{p-1}). $$ The zeroth page of the first-quadrant cohomological spectral sequence obtained thereby is formed by the groups $E^{r,s}_0$ and the differential 
$$d_0: E^{r,s}_0 \to E^{r,s+1}_0, \qquad d_0 := j_0 \circ k_0 \qquad \forall r,s \in \NN_0.$$  Note that we can view $D^{r,s}_p \subset D^{r,s}_0$ as subsets for each $p \in \NN$ and we will occasionally do so without further mention.\medskip 

Let $n,p \in \NN$ with $p \leq n$. Taking a class $\alpha \in D_p^{n,0}$ we know by definition that $\alpha = i_0^p(\gamma)$ for some $\gamma \in D_0^{n-p,p}$. By \cite[Proposition III.2.2]{Brown82}, we can identify $$ D_0^{n-p,p} = \Ext_{\ZG}^{n-p}(I^p,A) \cong H^{n-p}(G; \Hom_{\Z}(I^p,A)).$$ 
Following an iterated use of the identification provided by Proposition \ref{Bocksteinev}, we obtain the following characterization of $D_p^{n,0}$, which is a generalization of \cite[Corollary 7.4]{FarMes20}.

\begin{proposition}
\label{PropPushfwdBerstein}
Let $n,p \in \NN$ with $p \leq n$ and let $\alpha \in D^{n,0}_0$. Then $\alpha \in D^{n,0}_p$ if and only if there exists $\gamma \in H^{n-p}(G;\Hom_{\ZZ}(I^p,A))$ with
$$\alpha = \psi_*(\omega^p \cup \gamma), $$ where $\psi \colon I^p \otimes \Hom_{\Z}(I^p,A) \rightarrow A$ is the $\ZG$-homomorphism given by$$ \psi(x_1 \otimes \cdots \otimes x_p \otimes f) = f(x_p \otimes x_{p-1} \otimes \cdots \otimes x_1) .$$
\end{proposition}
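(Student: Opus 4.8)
The plan is to identify the connecting homomorphism $i_0$ with the Bockstein homomorphism of Proposition \ref{Bocksteinev} and then iterate that proposition $p$ times. Recall that $i_0 \colon D_0^{r,s+1} \to D_0^{r+1,s}$ is the connecting homomorphism of the long exact $\Ext$-sequence \eqref{CompSpSe} induced by \eqref{ShortExactSeq}. Applying $\Hom_\Z(-,A)$ to \eqref{ShortExactSeq} (which is \eqref{EqSESmodule} with $M=I^s$) produces exactly the short exact coefficient sequence \eqref{EqDualSES}, and under the identification $\Ext^\ast_{\ZG}(I^t,A)\cong H^\ast(G;\Hom_\Z(I^t,A))$ of \cite[Proposition III.2.2]{Brown82}, which is natural with respect to short exact sequences of $\Z$-free modules, the connecting homomorphism $i_0$ corresponds to the Bockstein homomorphism $\delta$ of \eqref{EqDualSES} up to a universal sign that will play no role below. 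Proposition \ref{Bocksteinev} therefore gives, for every $u \in H^r(G;\Hom_\Z(I^{s+1},A))$,
$$i_0(u) = -(\ev_s)_*(\omega \cup u).$$

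First I would prove, by induction on $k$, the formula
$$i_0^{k}(\gamma) = (-1)^{k}(\Phi_k)_*(\omega^{k} \cup \gamma), \qquad \gamma \in H^{n-p}(G;\Hom_\Z(I^p,A)),$$
where $\Phi_k \colon I^{k}\otimes \Hom_\Z(I^p,A) \to \Hom_\Z(I^{p-k},A)$ is the $\ZG$-homomorphism defined recursively by $\Phi_0 = \id$ and $\Phi_{k} = \ev_{p-k}\circ(\id_I \otimes \Phi_{k-1})$. The case $k=0$ is trivial since $\omega^0 = 1$. For the inductive step I would apply $i_0$ to $i_0^{k}(\gamma) \in H^{n-p+k}(G;\Hom_\Z(I^{p-k},A))$ via the displayed formula with $s=p-k-1$, and then push the cup product with $\omega$ through the coefficient homomorphism $\Phi_k$ by naturality of the cup product, namely $\omega \cup (\Phi_k)_*(\beta) = (\id_I \otimes \Phi_k)_*(\omega \cup \beta)$, together with $\omega \cup \omega^{k} = \omega^{k+1}$ under the identification $I \otimes I^{k} = I^{k+1}$. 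This yields the formula for $k+1$ with $\Phi_{k+1} = \ev_{p-k-1}\circ(\id_I \otimes \Phi_k)$, completing the induction.

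Next I would identify $\Phi_p$ with $\psi$. Unwinding the recursion gives
$$\Phi_p = \ev_0\circ(\id_I \otimes \ev_1)\circ(\id_{I^2}\otimes \ev_2)\circ \cdots \circ(\id_{I^{p-1}}\otimes \ev_{p-1}),$$
and evaluating this on a decomposable tensor $x_1 \otimes \cdots \otimes x_p \otimes f$ I would check directly that each successive application of an $\ev$ prepends one factor $x_i$ to the argument of $f$, building up the tensor in reverse order, so that $\Phi_p(x_1 \otimes \cdots \otimes x_p \otimes f) = f(x_p \otimes \cdots \otimes x_1) = \psi(x_1 \otimes \cdots \otimes x_p \otimes f)$. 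Setting $k=p$ in the induction then gives $i_0^p(\gamma) = (-1)^p \psi_*(\omega^p \cup \gamma)$.

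Finally, since $\alpha \in D_p^{n,0} = \Img(i_0^p \colon D_0^{n-p,p}\to D_0^{n,0})$ if and only if $\alpha = i_0^p(\gamma)$ for some $\gamma \in H^{n-p}(G;\Hom_\Z(I^p,A))$, and since replacing $\gamma$ by $(-1)^p\gamma$ converts $(-1)^p\psi_*(\omega^p \cup \gamma)$ into $\psi_*(\omega^p \cup \gamma)$, the overall sign is irrelevant to the existence statement and the claimed equivalence follows. The main obstacle I anticipate is the bookkeeping in the inductive step, specifically invoking naturality of the cup product with the correct coefficient maps at each stage and verifying that the resulting composite $\Phi_p$ of evaluation maps reverses the order of the tensor factors exactly as $\psi$ prescribes; the precise comparison $i_0 = \pm\delta$ also deserves a careful word, although, as noted, any such sign is ultimately immaterial.
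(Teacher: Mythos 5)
Your proof is correct and takes exactly the route the paper intends: the paper's entire ``proof'' is the remark that the statement follows from an iterated use of the identification in Proposition \ref{Bocksteinev}, and your contribution is to spell that iteration out --- identifying $i_0$ with the Bockstein of \eqref{EqDualSES} under the Brown isomorphism, running the induction $i_0^{k}(\gamma) = (-1)^{k}(\Phi_k)_*(\omega^{k} \cup \gamma)$ via naturality of the cup product, and checking that the composite of evaluation maps reverses the tensor factors so that $\Phi_p=\psi$. The sign bookkeeping is handled correctly, since replacing $\gamma$ by $(-1)^p\gamma$ makes the overall sign irrelevant to the image statement $D_p^{n,0}=\Img(i_0^p)$.
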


This proposition has an immediate consequence for sectional categories. 

\begin{theorem}
\label{TheoremSecatDp}
Let $n,p \in \NN$ with $p \leq n$. If $D^{n,0}_p \neq \{0\}$, then $\omega^p \neq 0$ and thus
$$\secat(H \hookrightarrow G) \geq p.$$
\end{theorem}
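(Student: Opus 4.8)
The plan is to read off the result as a direct consequence of the characterization of $D^{n,0}_p$ established in Proposition \ref{PropPushfwdBerstein}, combined with the height bound of Proposition \ref{PropSecatOmega}. The entire argument is formal once those two propositions are in hand, and I therefore do not anticipate any genuine obstacle; the substantive content is already packaged in Proposition \ref{PropPushfwdBerstein}.

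First I would assume $D^{n,0}_p \neq \{0\}$ and choose a nonzero class $\alpha \in D^{n,0}_p$. Applying Proposition \ref{PropPushfwdBerstein}, I obtain a class $\gamma \in H^{n-p}(G;\Hom_{\ZZ}(I^p,A))$ with $\alpha = \psi_*(\omega^p \cup \gamma)$, where $\psi \colon I^p \otimes \Hom_{\ZZ}(I^p,A) \to A$ is the $\ZG$-homomorphism described there. The key point is that this expresses $\alpha$ as a push-forward of a class containing $\omega^p$ as a tensor factor, so nonvanishing of $\alpha$ must propagate back to nonvanishing of $\omega^p$.

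Next I would argue that $\omega^p \neq 0$. Since $\psi_*$ is a homomorphism of abelian groups, the hypothesis $\alpha \neq 0$ forces $\omega^p \cup \gamma \neq 0$. As the cup product is bilinear, this in turn forces $\omega^p \neq 0$: were $\omega^p$ to vanish, then so would the product $\omega^p \cup \gamma$, and hence $\alpha$, a contradiction. Finally, $\omega^p \neq 0$ means precisely that $\height(\omega) \geq p$, so Proposition \ref{PropSecatOmega} yields $\secat(H \hookrightarrow G) \geq \height(\omega) \geq p$, which completes the argument.
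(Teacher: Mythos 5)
Your proof is correct and follows exactly the paper's own argument: apply Proposition \ref{PropPushfwdBerstein} to write a nonzero class of $D^{n,0}_p$ as $\psi_*(\omega^p \cup \gamma)$, deduce $\omega^p \neq 0$ from bilinearity of the cup product, and conclude via Proposition \ref{PropSecatOmega}. You merely spell out the contrapositive step (that $\omega^p = 0$ would kill the product and hence the class) which the paper leaves implicit; there is nothing to add.
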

\begin{proof}
By Proposition \ref{PropPushfwdBerstein}, every class in $D^{n,0}_p$ is obtained as a pushforward of a cup product of $\omega^p$ with another class. So if there is a non-trivial class in $D^{n,0}_p$, then it necessarily holds that $\omega^p \neq 0$ and the claim follows from Proposition \ref{PropSecatOmega}.
\end{proof}

\subsection{Essential classes and the spectral sequence} To extract further consequences for $\secat(H \hookrightarrow G)$ from the spectral sequence, we need to introduce some auxiliary lemmas on the groups $E^{r,s}_0$.

\begin{lemma}
	\label{LemmaIsomHom}
	Let $M$ and $N$ be left $\ZG$-modules. Let $\ZG \otimes M$ be equipped with the diagonal $G$-action. Then
	$$\Phi: \Hom_{\ZG}(\Z[G/H]\otimes M, N) \to \Hom_{\ZH}(\widetilde{M},\widetilde{N}), \quad (\Phi(f))(x):= f( H\otimes x) \quad \forall x \in M, $$
	is an isomorphism of abelian groups. 
\end{lemma}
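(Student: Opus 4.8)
The plan is to produce an explicit two-sided inverse to $\Phi$; since $\Phi$ is visibly additive, bijectivity then gives the asserted isomorphism of abelian groups. The guiding observation is that $\Z[G/H]\otimes M$ with the diagonal action is generated over $\ZG$ by the elements $H\otimes m$, $m\in M$, and that $G$-equivariance pins down the value of any $f$ on every other generator: since $g^{-1}\cdot(gH\otimes m)=H\otimes g^{-1}m$, linearity forces $f(gH\otimes m)=g\cdot f(H\otimes g^{-1}m)$. This both shows that $\Phi$ discards no information and dictates the formula for the inverse.

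First I would check that $\Phi$ is well-defined, i.e. that $\Phi(f)$ is genuinely $\ZH$-linear. For $h\in H$ one has $h\cdot(H\otimes x)=hH\otimes hx=H\otimes hx$, using $hH=H$, so that $\Phi(f)(hx)=f(H\otimes hx)=f(h\cdot(H\otimes x))=h\cdot f(H\otimes x)=h\cdot\Phi(f)(x)$ by the $\ZG$-linearity of $f$. Next I would define the candidate inverse $\Psi\colon \Hom_{\ZH}(\widetilde M,\widetilde N)\to\Hom_{\ZG}(\Z[G/H]\otimes M,N)$ by
$$\Psi(\phi)(gH\otimes m):=g\cdot\phi(g^{-1}m)\qquad\forall\, g\in G,\ m\in M,$$
extended $\Z$-linearly.

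The one genuinely delicate point, which I expect to be the main obstacle, is that $\Psi(\phi)$ is well-defined on cosets, i.e. independent of the representative $g$ of $gH$. Replacing $g$ by $gh$ with $h\in H$, the $\ZH$-linearity of $\phi$ yields $gh\cdot\phi(h^{-1}g^{-1}m)=g\cdot\bigl(h\cdot\phi(h^{-1}(g^{-1}m))\bigr)=g\cdot\phi(g^{-1}m)$, precisely because $\phi$ intertwines the $H$-actions; this is exactly the step where the passage to $H$-linear maps is essential. Granting this, the $\ZG$-linearity of $\Psi(\phi)$ is the short computation $\Psi(\phi)(g'\cdot(gH\otimes m))=\Psi(\phi)(g'gH\otimes g'm)=g'g\cdot\phi(g^{-1}m)=g'\cdot\Psi(\phi)(gH\otimes m)$.

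Finally I would confirm that $\Phi$ and $\Psi$ are mutually inverse, which is immediate on generators: $\Phi(\Psi(\phi))(x)=\Psi(\phi)(H\otimes x)=\phi(x)$, while $\Psi(\Phi(f))(gH\otimes m)=g\cdot f(H\otimes g^{-1}m)=f\bigl(g\cdot(H\otimes g^{-1}m)\bigr)=f(gH\otimes m)$ again by the $\ZG$-linearity of $f$. Since both composites are the identity on a generating set and all maps are additive, $\Phi$ is an isomorphism. I would close with the remark that this lemma is simply a concrete incarnation of Frobenius reciprocity: the diagonal module $\Z[G/H]\otimes M$ is isomorphic to $\Ind_H^G\widetilde M=\ZG\otimes_{\ZH}\widetilde M$, and $\Phi$ is the induction–restriction adjunction isomorphism written out on coset representatives.
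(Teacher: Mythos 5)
Your proof is correct and follows essentially the same route as the paper's: both construct the explicit inverse $\Psi(\phi)(gH\otimes m)=g\cdot\phi(g^{-1}m)$, verify independence of the coset representative using the $\ZH$-linearity of $\phi$, and then check $\ZG$-equivariance by the same computation. You merely spell out the mutual-inverse check that the paper dismisses as ``a simple computation,'' and your closing identification of $\Phi$ with the induction--restriction adjunction for $\Z[G/H]\otimes M\cong\ZG\otimes_{\ZH}\widetilde{M}$ is a correct conceptual gloss not needed for the argument.
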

\begin{proof}
	It is easy to see that $\Phi$ is a well-defined group homomorphism. Consider the map 
$$\Psi: \Hom_{\ZH}(\widetilde{M},\widetilde{N}) \to \Hom_{\ZG}(\Z[G/H]\otimes M, N),\quad (\Psi(f))( H \otimes x) = g f(g^{-1}x).$$
	For each $f \in \Hom_{\ZH}(\widetilde{M},\widetilde{N})$ and $h \in H$ we obtain that
	$$gh f(h^{-1}g^{-1}x)= g f(g^{-1}x) \quad \forall g \in G, x \in M,$$
	since $f$ is a $\ZH$-homomorphism. This shows that $\Psi(f)(gH \otimes m)$ is independent of the chosen representative of $gH$, thus $\Psi(f): \Z[G/H]\otimes M \to N$ well-defined.
	
	For all $f \in \Hom_{\ZH}(\widetilde{M},\widetilde{N})$, $g_1,g_2 \in G$ and $x \in M$ we further compute that 
	\begin{align*}
		(\Psi(f))(g_1 \cdot g_2H \otimes x) &= \Psi(f)(g_1g_2H \otimes g_1x)=  g_1g_2 f(g_2^{-1}g_1^{-1}g_1x)\\
		&= g_1 \cdot g_2 f(g_2^{-1}x) = g_1 \cdot (\Psi(f))(g_2H\otimes x),
	\end{align*}
	so $\Psi(f) \in \Hom_{\ZG}(\Z[G/H]\otimes M, N)$. Hence, $\Psi$ is well-defined and it is apparent that $\Psi$ is a group homomorphism. A simple computation shows that $\Psi$ is a two-sided inverse of $\Phi$. 
\end{proof}

\begin{lemma}
	\label{LemmaIsomExt}
	Let $M$ and $N$ be left $\ZG$-modules. Let $\zgh \otimes M$ be equipped with the diagonal $G$-action. Then there are isomorphisms
	$$\Ext^r_{\ZG}(\zgh \otimes M,N) \cong \Ext^r_{\ZH}(\widetilde{M},\widetilde{N}) \qquad\forall r \in \NN_0.$$
\end{lemma}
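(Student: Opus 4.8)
The plan is to upgrade the Hom-level isomorphism of Lemma~\ref{LemmaIsomHom} to an isomorphism of derived functors. Since the module $M$ stays fixed throughout, I would compute both Ext-groups by resolving the \emph{second} variable $N$ injectively, so that Lemma~\ref{LemmaIsomHom} can be applied verbatim at each stage of the resolution rather than having to be re-proved for arbitrary $\ZH$-modules.

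Concretely, I would choose an injective resolution $0 \to N \to J^0 \to J^1 \to \cdots$ of $N$ in the category of left $\ZG$-modules, so that $\Ext^r_{\ZG}(\zgh \otimes M, N) = H^r(\Hom_{\ZG}(\zgh \otimes M, J^\bullet))$. The isomorphism $\Phi$ of Lemma~\ref{LemmaIsomHom} is natural in the target module: for a $\ZG$-homomorphism $\phi\colon N \to N'$ one checks directly from the formula $(\Phi(f))(x) = f(H \otimes x)$ that $\Phi_{N'}(\phi \circ f) = \widetilde\phi \circ \Phi_N(f)$, where $\widetilde\phi = \res^G_H\phi$. Applying $\Phi$ degreewise with $N$ replaced by each $J^i$ therefore yields an isomorphism of cochain complexes
$$\Hom_{\ZG}(\zgh \otimes M, J^\bullet) \cong \Hom_{\ZH}(\widetilde M, \widetilde{J^\bullet}),$$
since naturality guarantees that $\Phi$ commutes with the differentials induced by $J^\bullet$. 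Passing to cohomology identifies $\Ext^r_{\ZG}(\zgh \otimes M, N)$ with $H^r(\Hom_{\ZH}(\widetilde M, \widetilde{J^\bullet}))$.

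It then remains to recognize the right-hand side as $\Ext^r_{\ZH}(\widetilde M, \widetilde N)$. Because $\res^G_H$ is exact, $\widetilde{J^\bullet}$ is an exact resolution of $\widetilde N$ over $\ZH$, so it suffices to know that each $\widetilde{J^i}$ is \emph{injective} as a $\ZH$-module, which would allow $\widetilde{J^\bullet}$ to replace a genuine injective $\ZH$-resolution of $\widetilde N$. This preservation of injectivity is the step I expect to require the most care. I would argue that $\res^G_H$ is right adjoint to induction $\Ind^G_H = \ZG \otimes_{\ZH} -$, which is exact because $\ZG$ is free, hence flat, as a right $\ZH$-module; since a right adjoint of an exact functor preserves injective objects, each $\widetilde{J^i}$ is indeed $\ZH$-injective. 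Combining these observations gives $\Ext^r_{\ZG}(\zgh \otimes M, N) \cong \Ext^r_{\ZH}(\widetilde M, \widetilde N)$ for all $r \in \NN_0$.

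As an alternative, one could instead resolve $\widetilde M$ projectively over $\ZH$ and induce the resolution up, using the tensor identity $\zgh \otimes M \cong \ZG \otimes_{\ZH} \widetilde M$ together with the fact that induction carries projectives to projectives; this is the Eckmann--Shapiro lemma in its Ext form. I prefer the injective-resolution route precisely because it invokes Lemma~\ref{LemmaIsomHom} directly, whereas the projective route requires the free-module adjunction for arbitrary projective $\ZH$-modules, which are not in general restrictions of $\ZG$-modules.
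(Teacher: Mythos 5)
Your proof is correct and follows essentially the same route as the paper: an injective resolution of $N$ over $\ZG$, the degreewise application of Lemma \ref{LemmaIsomHom} (whose naturality in the target the paper likewise notes), and the verification that $\res^G_H$ carries $\ZG$-injectives to $\ZH$-injectives. The only difference is cosmetic: you invoke the abstract fact that restriction, being right adjoint to the exact functor $\Ind^G_H$, preserves injectives, whereas the paper unwinds exactly this adjunction by hand, extending a homomorphism $X \to \widetilde{J}$ through $\Ind^G_H(X) \hookrightarrow \Ind^G_H(Y)$ and restricting back.
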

\begin{proof}
	Let $$0 \to N \longhookrightarrow J_0 \stackrel{j_0}{\longrightarrow} J_1 \stackrel{j_1}{\longrightarrow} J_2 \stackrel{j_2}{\longrightarrow} \dots $$ be an injective resolution of $N$ over $\ZG$. By Lemma \ref{LemmaIsomHom}, there is an isomorphism 
	$$\Phi_i:\Hom_{\ZG}(\zgh \otimes M,J_i) \to \Hom_{\ZH}(\widetilde{M},\tilde{J}_i)$$
for each $i \in \NN_0$	and one checks without difficulties that $\Phi_i$ is compatible with the maps induced by the $j_i$. At this point, it suffices to show that each of the $\tilde{J}_i$ is an injective $\ZH$-module as passing to cohomology then shows the claim.
	
	Let $J$ be an injective $\ZG$-module, $X$ and $Y$ be $\ZH$-modules and $i: X \hookrightarrow Y$ be a monomorphism of $\ZH$-modules and let $f \in \Hom_{\ZH}(X,\tilde{J})$. 
	We consider the induced $\ZG$-modules $\Ind^G_H(X)$ and $\Ind^G_H(Y)$. One checks from the universal property of induced modules see e.g. \cite[p. 63]{Brown82}, that $i$ induces a $\ZG$-homomorphism $\widetilde{i}:\Ind^G_H(X) \hookrightarrow \Ind^G_H(Y)$, which is again injective since $\ZG$ is free as a right $\ZH$-module, and $f$ induces $\widetilde{f} \in \Hom_{\ZG}(\Ind^G_H(X),J)$. Since $J$ is injective over $\ZG$, it follows that there exists $\widetilde{\varphi} \in \Hom_{\ZG}(\Ind^G_H(Y),J)$ with $\widetilde{f} = \widetilde{\varphi} \circ \widetilde{i}$.
	
	Define $\varphi: Y \to J$ by $\varphi(y) := \widetilde\varphi(1 \otimes_{\ZH} y)$ for  each $y \in Y$. One checks without difficulties that $\varphi$ is a $\ZH$-homomorphism  with 
	$$(\varphi \circ i)(y) = \widetilde{\varphi}(1\otimes_{\ZH} i(y)) = \widetilde{\varphi}(\widetilde{i}(1 \otimes_{\ZH} y)) = \widetilde{f}(1 \otimes_{\ZH} y) = f(y)$$
	for all $y \in J$. This shows that $\tilde{J}$ is injective over $\ZH$ and thereby completes the proof.
\end{proof}
\begin{corollary}
\label{CorE0}
Let $r \in \NN$ and $s \in \NN_0$. Then 
$$E^{r,s}_0 \cong \Ext_{\ZH}(\widetilde{I}^s,\widetilde{A}).$$	
\end{corollary}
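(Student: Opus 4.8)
The plan is to read this off directly from Lemma \ref{LemmaIsomExt}, of which the corollary is a special case. Recall that by definition $E^{r,s}_0 = \Ext^r_{\ZG}(\zgh \otimes I^s, A)$. Thus I would simply invoke Lemma \ref{LemmaIsomExt} with the substitution $M = I^s$ and $N = A$, which immediately yields
$$\Ext^r_{\ZG}(\zgh \otimes I^s, A) \cong \Ext^r_{\ZH}(\res^G_H(I^s), \widetilde{A}) \qquad \forall r \in \NN_0.$$

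The only point that deserves a word is the reconciliation of notation. The right-hand side produced by the lemma involves $\res^G_H(I^s)$, whereas the corollary is phrased in terms of $\widetilde{I}^s = (\res^G_H I)^{\otimes s}$. These two $\ZH$-modules coincide canonically. Indeed, as abelian groups both equal $I^{\otimes s}$, and in either description the $H$-module structure is the diagonal action of $H$: in the first case it is the restriction to $H$ of the diagonal $G$-action on $I^{\otimes s}$, and in the second it is the diagonal $H$-action assembled from the restricted module $\res^G_H I$. Since the diagonal action is defined factorwise, restricting the $G$-action factor by factor produces exactly the diagonal $H$-action, so $\res^G_H(I^s) = \widetilde{I}^s$ as $\ZH$-modules.

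With this identification the asserted isomorphism follows at once, for every $r \in \NN_0$ and in particular for the range $r \in \NN$ stated in the corollary. I expect no genuine obstacle here: all the substance has already been absorbed into Lemma \ref{LemmaIsomExt}, whose proof reduces the $\Ext$-computation over $\ZG$ to one over $\ZH$ by combining the adjunction-type isomorphism of Lemma \ref{LemmaIsomHom} with the observation that restriction of scalars along $\ZH \hookrightarrow \ZG$ sends injective $\ZG$-modules to injective $\ZH$-modules (using that $\ZG$ is free as a right $\ZH$-module). The present corollary is merely the specialization of that general statement to $M = I^s$ and $N = A$, together with the factorwise identification of the restricted tensor power above.
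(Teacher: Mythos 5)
Your proof is correct and coincides with the paper's: the paper likewise proves Corollary \ref{CorE0} in one line by applying Lemma \ref{LemmaIsomExt} with $M=I^s$ and $N=A$. Your extra remark identifying $\res^G_H(I^{\otimes s})$ with $(\res^G_H I)^{\otimes s}$ as $\ZH$-modules is a harmless (and accurate) elaboration of a point the paper treats as immediate from its notational conventions.
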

\begin{proof}
	This is the special case of Lemma \ref{LemmaIsomExt} obtained by letting $M= I^s$ and $N=A$.
\end{proof}


The following theorem summarizes the most important properties of the spectral sequence.

\begin{theorem}
	\label{TheoremFM}
	Let $n \in \NN$ and let $u \in H^n(G;A)$ with $u \neq 0$. 
	\begin{enumerate}[a)]
		\item The class $u$ is essential relative to $H$ if and only if $u \in D^{n,0}_n$.
		\item $D^{n,0}_1= \ker [\iota^*: H^n(G;I) \to H^n(H;\widetilde{I})]$, where $\iota^*$ is induced by the inclusion $\iota: H\hookrightarrow G$. 
		\item  Let $s\in \{0,1,\dots,n-1\}$. Then $u \in D^{n,0}_{s+1}$ if and only if 
		$$u \in D^{n,0}_{s} \qquad \text{and} \qquad u \in \ker \left[j_s:D^{n,0}_{s} \to E_{s}^{n-s,s}\right].$$
	\end{enumerate}
\end{theorem}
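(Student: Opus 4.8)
The plan is to prove the three assertions in the order c), b), a): part b) is the case $s=0$ of c) together with a Shapiro-type identification, and part a) is a direct consequence of Proposition~\ref{PropPushfwdBerstein}. I would first record the bidegrees in the derived couples: $i_s$ has bidegree $(+1,-1)$ and $j_s$ has bidegree $(-s,+s)$, so that $i_s\colon D_s^{n-1,1}\to D_s^{n,0}$ and $j_s\colon D_s^{n,0}\to E_s^{n-s,s}$, matching the indices in the statement of c).

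For part c), observe that by definition $D_{s+1}^{n,0}=\Img(i_0^{\,s+1}\colon D_0^{n-s-1,s+1}\to D_0^{n,0})$, and since $i_s$ is the restriction of $i_0$ to $D_s$ this image equals $\Img(i_s\colon D_s^{n-1,1}\to D_s^{n,0})$. Exactness of the $s$-th derived exact couple $D_s\xrightarrow{i_s}D_s\xrightarrow{j_s}E_s$ at the middle term then gives $\Img(i_s)=\ker(j_s)$ in bidegree $(n,0)$, i.e. $D_{s+1}^{n,0}=\ker(j_s\colon D_s^{n,0}\to E_s^{n-s,s})$. As $\ker(j_s)\subseteq D_s^{n,0}$ is automatic, this is exactly the equivalence asserted in c). This part is formal and uses nothing beyond the general theory of exact couples.

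Part b) is the instance $s=0$ of c). Since $D_0^{n,0}=\Ext^n_{\ZG}(\Z,A)=H^n(G;A)$ is the entire group, c) yields $D_1^{n,0}=\ker(j_0)$, and it remains to identify $j_0$ with the restriction homomorphism. By construction $j_0$ is induced by $g_0^*=\sigma^*$, with $g_0=\sigma\colon\Z[G/H]\to\Z$, and Corollary~\ref{CorE0} (for $s=0$) supplies the isomorphism $E_0^{n,0}=\Ext^n_{\ZG}(\Z[G/H],A)\cong H^n(H;\widetilde A)$ coming from Lemmas~\ref{LemmaIsomHom} and~\ref{LemmaIsomExt}. I would verify on the resolution~\eqref{EqProjRes} that, under this Shapiro-type isomorphism, the map induced by $\sigma^*$ is precisely the restriction $\iota^*\colon H^n(G;A)\to H^n(H;\widetilde A)$; combined with $D_1^{n,0}=\ker(j_0)$ this gives $D_1^{n,0}=\ker\iota^*$. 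I expect this cochain-level check to be the main obstacle, since everything else is either formal or a quotation of Proposition~\ref{PropPushfwdBerstein}.

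For part a), I would apply Proposition~\ref{PropPushfwdBerstein} with $p=n$: a class $u\in H^n(G;A)=D_0^{n,0}$ lies in $D_n^{n,0}$ if and only if $u=\psi_*(\omega^n\cup\gamma)$ for some $\gamma\in H^0(G;\Hom_{\Z}(I^n,A))$. Now $H^0(G;\Hom_{\Z}(I^n,A))=\Hom_{\Z}(I^n,A)^G$ consists exactly of the $\ZG$-homomorphisms $f\colon I^n\to A$. Cupping the degree-zero class $f$ into $\omega^n$ and pushing forward along $\psi$ computes $\psi_*(\omega^n\cup f)=(f\circ\tau)_*(\omega^n)$, where $\tau\colon I^n\to I^n$ reverses the order of the tensor factors (this is precisely where the reversal built into the definition of $\psi$ is used). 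Since $\tau$ is a $\ZG$-automorphism of $I^n$, the homomorphism $f\circ\tau$ ranges over all of $\Hom_{\ZG}(I^n,A)$ as $f$ does, so the classes of the form $\psi_*(\omega^n\cup\gamma)$ are exactly the classes $\varphi_*(\omega^n)$ with $\varphi\in\Hom_{\ZG}(I^n,A)$ --- that is, the classes essential relative to $H$. As $u\neq0$ by hypothesis, this proves a).
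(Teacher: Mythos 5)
Your proposal is correct and follows essentially the same route as the paper: part c) is the formal exactness statement for the derived couples, part b) combines the case $s=0$ with Corollary~\ref{CorE0} and the Shapiro-type identification of $j_0$ with $\iota^*$, and part a) applies Proposition~\ref{PropPushfwdBerstein} together with $H^0(G;\Hom_{\Z}(I^n,A))=\Hom_{\ZG}(I^n,A)$. Your treatment of a) is in fact slightly more careful than the paper's, which asserts $\psi_*(\omega^n\cup\mu)=\mu_*(\omega^n)$ outright, whereas your factor-reversal automorphism $\tau$ cleanly absorbs the reordering (and any sign) built into $\psi$ without further computation.
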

\begin{proof}
	\begin{enumerate}[a)]
	\item By Proposition \ref{PropPushfwdBerstein}, $u \in D^{n,0}_n$ if and only if there is a class $\mu \in H^0(G;\Hom_{\ZZ}(I^n,A))$, such that $u = \psi_*(\omega^n\cup \mu)$, where $\psi$ is described in the statement of said proposition. But $H^0(G;\Hom_{\ZZ}(I^n,A))= (\Hom_{\ZZ}(I^n,A))^G = \Hom_{\ZZ[G]}(I^n,A)$ and one checks without difficulties that, seeing $\mu$ as a $\ZG$-homomorphism, it holds that 
$$u= \psi_*(\omega^n\cup \mu) = \mu_*(\omega^n).$$
The claim immediately follows.
		\item By definition and exactness of the  exact couple, 
		\begin{align*}
			D^{n,0}_1&= \im \left[i_0: D^{n-1,1}_0 \to D^{n,0}_0 \right]= \ker \left[j_0:D^{n,0}_0 \to E^{n,0}_0 \right]\\
			&= \ker \left[j_0: \Ext^n_{\ZG}(\ZZ,A)\to \Ext^n_{\ZG}(\zgh,A) \right]\\
			&= \ker \left[j_0:  \Ext^n_{\ZG}(\ZZ,A) \to \Ext^n_{\ZH}(\ZZ,A)\right]\\
			&= \ker \left[\iota^*: H^n(G;A) \to H^n(H;A)\right],
		\end{align*}
		where we used Corollary \ref{CorE0}.
		\item This is an immediate consequence of the inner workings of exact couples. 
	\end{enumerate}
\end{proof}

\subsection{Computing the zero-th page} In \cite{FarMes20}, the authors proceeded from the construction of the spectral sequence by introducing certain decompositions of terms of the form $E^{r,s}_0$ for $r>0$ and $s >0$ into products of cohomology groups of centralizers of elements of the groups involved. We will show next that this can be generalized as well and derive decompositions of parts of our spectral sequence as products of cohomology groups of certain isotropy groups of $H$-actions that will be introduced momentarily. \medskip 

We consider the left $H$-action on the left cosets $G/H$ given by 
\begin{equation}
\label{EqHaction}
H \times G/H \to G/H, \qquad h \cdot gH = (hg)H.
\end{equation}
For each $s \in \NN$ we further consider the diagonal $H$-action
$$H \times (G/H)^s \to (G/H)^s, \quad h \cdot (g_1H,\dots,g_sH)=  (hg_1H,hg_2H,\dots,hg_sH).$$
We  denote the set of orbits of this action for each $s \in \NN$ by 
$$\C_s(G/H) := \{H \cdot (g_1H,g_2H,\dots,g_sH) \ | \ g_1H,\dots,g_sH \in G/H\}.$$
We put $(G/H)^*:= (G/H) \setminus \{H\}$ and 
$$\C_s'(G/H) := \{H \cdot (g_1H,g_2H,\dots,g_sH) \ | \ g_1H,\dots,g_sH \in (G/H)^*\}\subset \C_s(G/H).$$
The above action equips $\ZZ[G/H]^{\otimes s}$ with the structure of a left $\ZH$-module and we consider $I^s \subset \zgh^{\otimes s}$ as a $\ZH$-submodule. This submodule structure obviously coincides with the one obtained by $\widetilde{I}^s=(\res^G_H(I))^s$ that we previously considered.  One checks that as free abelian groups 
$$I^{ s} = \bigoplus_{g_1H,\dots,g_sH \in (G/H)^*} \ZZ\cdot  (g_1H-H)\otimes (g_2H-H) \otimes \dots \otimes (g_sH-H)\qquad \forall s \in \NN$$
and note that for all $s \in \NN$, $g_1H,\dots,g_sH \in G/H$ and $h \in H$ it holds that 
$$h \cdot (g_1H-H)\otimes (g_2H-H)\otimes \dots \otimes (g_sH-H)=(hg_1H-H)\otimes (hg_2H-H)\otimes \dots \otimes (hg_sH-H). $$
From this, one observes that for each $C \in \C'_s(G/H)$, we obtain a $\ZH$-submodule 
$$J_C \subset I^{ s}, \qquad J_C:= \bigoplus_{(g_1H,\dots,g_sH)\in C} \ZZ\cdot  (g_1H-H)\otimes (g_2H-H)\otimes \dots \otimes (g_sH-H), $$
and that 
\begin{equation}
	\label{EqItildedecomp}
	\widetilde{I}^{ s} = \bigoplus_{C \in \C'_s(G/H)} J_C
\end{equation}
is a decomposition of $\ZH$-modules. Moreover, for each $C \in \C'_s(G/H)$, we let $\ZC$ denote the free abelian group generated by the elements of $C$. One checks without difficulties that for each $C$ the map $\varphi_C: \ZC \to J_C$ that is obtained by $\ZZ$-linearly extending 
$$\varphi(g_1H,g_2H\dots,g_sH) = (g_1H-H)\otimes (g_2H-H)\otimes \dots \otimes (g_sH-H),$$
is an isomorphism of $\ZH$-modules. 

\begin{theorem}
\label{TheoremExtDecomp}
	Let $s \in \NN$. For each $C \in \C'_s(G/H)$ fix a representative $x_C \in C$ and let $N_C:= H_{x_C}$ be the isotropy group of $x_C$. Then 
	$$E^{r,s}_0 \cong \prod_{C \in \C'_s(G/H)} H^r(N_{C};\mathrm{Res}^G_{N_C}(A))\quad \forall r \in \NN.$$
\end{theorem}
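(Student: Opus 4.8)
The plan is to reduce the claim to three ingredients: the identification in Corollary \ref{CorE0}, the $\ZH$-module decomposition \eqref{EqItildedecomp}, and a change-of-rings (Shapiro-type) argument. First I would invoke Corollary \ref{CorE0} to replace $E_0^{r,s}=\Ext^r_{\ZG}(\zgh\otimes I^s,A)$ by the more tractable group $\Ext^r_{\ZH}(\widetilde{I}^s,\widetilde A)$, thereby moving the entire computation into the category of $\ZH$-modules.

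Next I would feed the decomposition $\widetilde{I}^s=\bigoplus_{C\in\C'_s(G/H)}J_C$ from \eqref{EqItildedecomp} into the first slot of $\Ext^r_{\ZH}(-,\widetilde A)$. Because $\Hom_{\ZH}(\bigoplus_C J_C,-)=\prod_C \Hom_{\ZH}(J_C,-)$, and since $\Ext^r$ may be computed from a single fixed injective resolution of $\widetilde A$ over $\ZH$ (so that passing to cohomology commutes with the exact operation of forming products), I obtain
$$\Ext^r_{\ZH}(\widetilde{I}^s,\widetilde A)\cong \prod_{C\in\C'_s(G/H)}\Ext^r_{\ZH}(J_C,\widetilde A).$$
This handles the possibly infinite index set $\C'_s(G/H)$ without extra hypotheses.

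It then remains to identify each factor. Using the $\ZH$-module isomorphism $\varphi_C\colon \ZC\xrightarrow{\cong}J_C$ already constructed in the excerpt, I would rewrite $\Ext^r_{\ZH}(J_C,\widetilde A)\cong \Ext^r_{\ZH}(\ZC,\widetilde A)$. By the orbit–stabilizer theorem the $H$-orbit $C$ is isomorphic to $H/N_C$ as an $H$-set, so $\ZC\cong \Z[H/N_C]=\Ind^H_{N_C}(\Z)$ as $\ZH$-modules. Applying the induction–restriction adjunction $\Hom_{\ZH}(\Ind^H_{N_C}(-),-)\cong \Hom_{\Z[N_C]}(-,\Res^H_{N_C}(-))$ — and noting that $\Ind^H_{N_C}$ is exact and carries projectives to projectives, hence transports a projective resolution of $\Z$ over $\Z[N_C]$ to one of $\Ind^H_{N_C}(\Z)$ over $\ZH$ — yields
$$\Ext^r_{\ZH}(\Ind^H_{N_C}(\Z),\widetilde A)\cong \Ext^r_{\Z[N_C]}(\Z,\Res^H_{N_C}(\widetilde A))=H^r(N_C;\Res^H_{N_C}(\widetilde A)).$$
Finally, transitivity of restriction gives $\Res^H_{N_C}(\widetilde A)=\Res^H_{N_C}(\Res^G_H A)=\Res^G_{N_C}(A)$, and reassembling the factors produces the asserted isomorphism.

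I expect the main obstacle to lie in the careful bookkeeping of the change-of-rings step: one must be certain that $\ZC$ is genuinely the \emph{induced} permutation module $\Ind^H_{N_C}(\Z)$ rather than a coinduced module, so that the correct (tensor–hom) adjunction applies, and that this adjunction is compatible with the chosen resolutions so as to descend to $\Ext$. By contrast, the passage from a direct sum to a product is routine once an injective resolution of $\widetilde A$ is fixed, and the opening reduction via Corollary \ref{CorE0} is immediate.
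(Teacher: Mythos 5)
Your proof is correct, and it follows the paper's skeleton — reduce via Corollary \ref{CorE0} to $\Ext^r_{\ZH}(\widetilde{I}^{s},\widetilde{A})$, split along the decomposition \eqref{EqItildedecomp}, use additivity of $\Ext$ in the first variable (the paper invokes this without detail; your fixed-injective-resolution justification is exactly the right way to handle the possibly infinite index set $\C'_s(G/H)$), and then identify each factor $\Ext^r_{\ZH}(\ZC,\widetilde{A})$ — but you carry out the factor identification by a genuinely different mechanism. You observe $\ZC\cong\Z[H/N_C]=\Ind^H_{N_C}(\Z)$ via orbit--stabilizer and apply the Eckmann--Shapiro change-of-rings isomorphism, inducing a projective resolution of $\Z$ \emph{up} from $\Z[N_C]$ to $\ZH$. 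The paper instead proves the same isomorphism by hand: it fixes a free resolution $P_*$ of $\Z$ over $\ZH$, forms the diagonal-action resolution $\ZC\otimes P_*$ of $\ZC$ (using \cite[Corollary III.5.7]{Brown82} and the $\Z$-freeness of $\ZC$), combines the evaluation isomorphism $\Hom_{\ZH}(\ZC,-)\cong(-)^{N_C}$ with hom--tensor adjunction to get $\Hom_{\ZH}(\ZC\otimes P_r,\widetilde A)\cong\Hom_{\Z[N_C]}(P_r,\Res^G_{N_C}(A))$, and finally restricts $P_*$ \emph{down} to a free resolution over $\Z[N_C]$. Both arguments hinge on the same underlying fact — $\ZH$ is free as a $\Z[N_C]$-module — exploited in opposite directions (induction up versus restriction down). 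Your route is shorter and recognizes the step as a standard adjunction, and you correctly flag and defuse the one real pitfall: with the module in the contravariant slot it is induction together with the tensor--hom adjunction that applies (coinduction would be relevant only for the second slot), and since $\Ind^H_{N_C}$ is exact and preserves projectives, the adjunction is an isomorphism of cochain complexes and descends to $\Ext$ as you claim. What the paper's more pedestrian route buys is explicit cochain-level isomorphisms all computed from the single ambient resolution $P_*$, which serves simultaneously over $\ZH$ and over every $\Z[N_C]$ — convenient for compatibility with the differentials of the spectral sequence — but nothing in your argument is missing.
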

\begin{proof}
	Fix $r \in \NN$. By Corollary \ref{CorE0}, it holds that $E^{r,s}_0 \cong \Ext^r_{\ZH}(\widetilde{I}^{ s},\widetilde{A})$. From this, using \eqref{EqItildedecomp} and the addivity of $\Ext$-functors we derive that
	$$E^{r,s}_0 \cong \prod_{C \in \C'_s(G/H)} \Ext^r_{\ZH}(J_C,\widetilde{A})\cong \prod_{C \in \C'_s(G/H)} \Ext^r_{\ZH}(\ZC,\widetilde{A}).$$
	Let $C \in \C'_s(G/H)$. For any left $\ZH$-module $A$ we observe that, since $H$ acts transitively on $C$, the map
	\begin{equation}
		\label{EqIsoZH}
		\Hom_{\ZH}(\ZC,A) \stackrel{\cong}{\to} (\Res^G_{N_C}(A))^{N_C}, \quad f \mapsto f(x_C), 
	\end{equation}
	is a group isomorphism. Let 
	$$\dots \longrightarrow P_r \stackrel{p_r}\longrightarrow P_{r-1} \stackrel{p_{r-1}}{\longrightarrow} \dots \stackrel{p_2}\longrightarrow P_1 \stackrel{p_1}\longrightarrow P_0 \stackrel{p_0}\longrightarrow \ZZ \longrightarrow 0$$
	be a free resolution of $\ZZ$ over $\ZH$. Since $\ZC$ is a free abelian group, it follows from \cite[Corollary III.5.7]{Brown82} that 
	$$\dots \longrightarrow \ZC \otimes P_r \stackrel{\id_{\ZC} \otimes p_r}{\longrightarrow} \ZC \otimes P_{r-1} \longrightarrow \dots \longrightarrow \ZC \otimes P_1 \stackrel{\id_{\ZC}\otimes p_1}{\longrightarrow} \ZC \otimes P_0 \longrightarrow \ZC \longrightarrow 0$$
is a free resolution of $\ZC$ over $\ZH$. Consequently, we can compute the above $\Ext$-groups as
	$$\Ext^r_{\ZH}(\ZC, \widetilde{A}) = H^r(\Hom_{\ZH}(\ZC\otimes P_*,A), (\id_{\ZC}\otimes p_*)^*)$$
	Let $r \in \NN_0$. If we consider $\Hom_{\ZZ}(P_r,\widetilde{A})$ as a left $\ZH$-module w.r.t. the diagonal $H$-action, then we obtain
	\begin{align*}
		\Hom_{\ZH}(\ZC\otimes P_r,A) &\cong \Hom_{\ZH}(\ZC,\Hom_{\ZZ}(P_r,A))\stackrel{\eqref{EqIsoZH}}{\cong}(\Hom_{\ZZ}(P_r,A))^{N_C}\\
		&= \Hom_{\ZZ[N_C]}(P_r,\Res^G_{N_C}(A))
	\end{align*}
	and one checks that an explicit isomorphism is given by
	$$F_r: \Hom_{\ZH}(\ZC\otimes P_r,\widetilde{A}) \to \Hom_{\ZZ[N_C]}(P_r,\Res^G_{N_C}(A)), \quad (F_r(f))(q)=f(x_C\otimes q) \quad \forall q \in P_r.$$
	One checks that the $F_r$ are compatible with the differentials, thus induce isomorphisms
	$$(F_r)_*: \Ext^r_{\ZH}(\ZC,\widetilde{A}) \to H^r(\Hom_{\ZZ[N_C]}(P_*,\Res^G_{N_C}(A)), p_r^*)  \qquad \forall r \in \NN_0,$$
	where we used the obvious fact that $\Res^H_{N_C}(\widetilde{A})=\Res^H_{N_C}(\Res^G_H(A))=\Res^G_{N_C}(A)$.
	Since each $P_r$ is free as a left $\ZH$-module, it is free as a left $\Z[N_C]$-module as well. Hence, $P_*$ is a free resolution of $\ZZ$ over $\ZZ[N_C]$, such that 
	$$ H^r(\Hom_{\ZZ[N_C]}(P_*,\Res^G_{N_C}(A)), p_r^*)= H^r(N_C;\Res^G_{N_C}(A)) \qquad \forall r \in \NN_0.$$
	Combining the previous observations shows the claim. 
\end{proof}


\subsection{Consequences for sectional categories of subgroup inclusions} 
We want to derive a lower bound on sectional categories of subgroup inclusions from Theorem \ref{TheoremExtDecomp}. We adopt all of the spectral sequence notation from the previous section.

\begin{definition}
Given a group $G$ and a subgroup $H \leq G$, we put
$$\kappa_{G,H} := \sup \{\cd(H \cap xHx^{-1}) \ |\ x \in G \setminus H\}.$$
	\end{definition}

The following result is a consequence of the previous properties of the spectral sequence:

\begin{proposition}
\label{PropositionDkappa}
Let $G$ be a geometrically finite group and let $H \leq G$ be a subgroup. Let $A$ be a left $\ZG$-module, let $n \in \NN$ and let $u \in H^n(G;A)=D^{n,0}_0$. If $n \geq \kappa_{G,H}$, then 
	$$ u \in D^{n,0}_{n-\kappa_{G,H}}.$$
\end{proposition}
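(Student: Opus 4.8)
The plan is to deduce the statement from the surjectivity of the iterated connecting map. Writing $\kappa = \kappa_{G,H}$, recall that by definition $D^{n,0}_{n-\kappa} = \Img\bigl(i_0^{\,n-\kappa}\colon D_0^{\kappa,n-\kappa}\to D_0^{n,0}\bigr)$, so it suffices to prove that this composite is onto; then $u \in D^{n,0}_{n-\kappa}$ for every $u \in H^n(G;A)$ automatically. The composite factors as $n-\kappa$ successive copies of the connecting homomorphism, passing through the groups $D_0^{r,n-r}$ for $r = \kappa,\kappa+1,\dots,n$, so I would show that each individual map $i_0\colon D_0^{r-1,n-r+1}\to D_0^{r,n-r}$ is surjective for $\kappa < r \le n$. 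By exactness of \eqref{CompSpSe}, the image of this $i_0$ equals $\ker\bigl(j_0\colon D_0^{r,n-r}\to E_0^{r,n-r}\bigr)$, so the required surjectivity is implied by the vanishing $E_0^{r,n-r}=0$ for $\kappa < r \le n$.

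The core of the proof is establishing these vanishings through the centralizer decomposition of Theorem \ref{TheoremExtDecomp}. For $\kappa < r < n$, i.e. $s := n-r$ with $1 \le s \le n-\kappa-1$, that theorem gives $E_0^{r,s} \cong \prod_{C \in \C'_s(G/H)} H^{r}(N_C;\Res^G_{N_C}(A))$, where $N_C$ stabilizes a tuple $(g_1H,\dots,g_sH)$ all of whose entries differ from $H$. The decisive point is that $N_C = H \cap \bigcap_{i=1}^s g_iHg_i^{-1}$, so that choosing any index with $g_i \notin H$ yields $N_C \le H \cap g_iHg_i^{-1}$ with $g_i \in G\setminus H$; monotonicity of cohomological dimension under subgroups then gives $\cd(N_C) \le \kappa$. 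Since $r \ge \kappa+1 > \cd(N_C)$, every factor $H^{r}(N_C;\cdots)$ vanishes, whence $E_0^{r,n-r}=0$ in this range and each of the corresponding maps $i_0$ is onto.

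The remaining and most delicate term is the boundary case $r=n$, i.e. $s=0$, where the augmentation-ideal decomposition degenerates: the relevant stabilizer is all of $H$ and Corollary \ref{CorE0} only gives $E_0^{n,0}\cong H^n(H;\widetilde A)$, which the centralizer bound does not control. This is where I expect the main obstacle to sit, since surjectivity of the last map $i_0\colon D_0^{n-1,1}\to D_0^{n,0}$ is equivalent to $\iota^*\colon H^n(G;A)\to H^n(H;\widetilde A)$ vanishing, and $\kappa \le \cd(H)$ shows this is not forced by $n \ge \kappa$ in general. I would clear it using geometric finiteness together with the fact that in the situations of interest the degree exceeds $\cd(H)$ — e.g. for the diagonal $\pi \hookrightarrow \pi^r$ at the top degree $n=\cd(\pi^r)=r\cd(\pi)$ one has $n>\cd(\pi)=\cd(H)$, forcing $H^n(H;\widetilde A)=0$ — or, failing that, by restricting to the classes $u \in \ker \iota^* = D^{n,0}_1$ of Theorem \ref{TheoremFM}.b). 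With $E_0^{n,0}=0$ in hand the chain of surjections composes and the conclusion $u \in D^{n,0}_{n-\kappa}$ follows.
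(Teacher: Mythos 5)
Your core computation coincides with the paper's own proof. The paper likewise feeds the vanishing $E_0^{r,s}=0$ for $r>\kappa_{G,H}$, $s\geq 1$ --- obtained from Theorem \ref{TheoremExtDecomp} with exactly your identification $N_C\leq H\cap g_iHg_i^{-1}$ for some $g_i\in G\setminus H$ --- into the exact-couple bookkeeping. The only cosmetic difference is that you argue on the zeroth page (surjectivity of each $i_0$ via exactness of \eqref{CompSpSe}), whereas the paper runs Theorem \ref{TheoremFM}.c) through the derived couples, using that $E_s^{n-s,s}$ is a subquotient of $E_0^{n-s,s}$; since all the relevant $E_0$-terms vanish in the range $1\leq s\leq n-\kappa_{G,H}-1$, the two formulations are interchangeable.

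The boundary case $r=n$ (i.e.\ $s=0$) that you single out is not a defect of your attempt: it is a genuine gap in the paper's own proof. The paper verifies $u\in\ker j_s$ only for $1\leq s\leq n-\kappa_{G,H}-1$ and then invokes Theorem \ref{TheoremFM}.c), silently omitting the initial step $s=0$, which by Theorem \ref{TheoremFM}.b) requires $u\in D_1^{n,0}=\ker\left[\iota^*\colon H^n(G;A)\to H^n(H;\widetilde{A})\right]$; as you observe, this is equivalent to the surjectivity of the last map $i_0\colon D_0^{n-1,1}\to D_0^{n,0}$ and is not implied by $n\geq\kappa_{G,H}$. Indeed the statement as given fails: take $G$ free on $a,b$ and $H=\langle a\rangle$, which is malnormal in $G$, so $\kappa_{G,H}=0$; with $n=1$, $A=\ZZ$ and $u$ the class of the homomorphism $a\mapsto 1$, $b\mapsto 0$, one has $\iota^*u\neq 0$, hence $u\notin D_1^{1,0}$ although $n\geq\kappa_{G,H}$. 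Your two proposed repairs are exactly the right ones: either add the hypothesis $n>\cd(H)$, which forces $E_0^{n,0}\cong H^n(H;\widetilde{A})=0$ and holds in all of the paper's applications (for $H=\Delta_{\pi,r}\leq\pi^r$ one has $\cd(H)=\cd(\pi)<r\cdot\cd(\pi)=n$, and more generally whenever $\cd(H)<\cd(G)$ in Theorem \ref{TheoremSecatKappa}), or restrict the conclusion to classes $u\in\ker\iota^*$, which is what the deduction of Theorem \ref{TheoremSecatKappa} actually needs, provided a nonzero such class exists --- note that in the remaining case $\cd(H)=\cd(G)$ the gap propagates to that theorem's proof as well. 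So rather than an obstacle still to be cleared, the $s=0$ term should be treated as a missing hypothesis; with either amendment your argument is complete, and it is sharper than the paper's.
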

\begin{proof}
For $x \in G$ we denote the isotropy group of $xH\in G/H$ with respect to the left $H$-action from \eqref{EqHaction} by $H_x$. Apparently, $H \cap xHx^{-1} \subset H_x$ for each $x \in G$. Conversely, let  $x \in G$, $g \in H_x$ and let $h_1 \in H$ be arbitrary. Then there exists an $h_2 \in H$ with 
	$$ g \cdot xh_1 = xh_2 \quad \Leftrightarrow \quad g = xh_2h_1^{-1}x^{-1} \quad \Rightarrow g \in xHx^{-1}.$$
Thus, we have shown that $H_x = H \cap xHx^{-1}$ for each $x \in G$.

Let $s \in \NN$. By definition of the $H$-actions, for each $C \in \C'_s(G/H)$ there is some $x \in G\setminus H$, such that $N_C \leq H_x$, which by \cite[Proposition VIII.2.4]{Brown82} yields that $$\cd(N_C) \leq \cd(H_x)= \cd(H \cap xHx^{-1}) \leq \kappa_{G,H}$$ for each $C \in \C'_s(G/H)$. In particular, $H^r(N_{C};\mathrm{Res}^G_{N_C}(A))=0$ whenever $r>\kappa_{G,H}$, so we derive from Theorem \ref{TheoremExtDecomp} that 
\begin{equation}
\label{EqProofKappa}
E^{r,s}_0 = \{0\} \qquad \forall r > \kappa_{G,H}, \ s \in \NN.
\end{equation}
In the case of $r=n-s$, we obtain
$$E^{n-s,s}_0 = \{0\} \qquad \forall s \in \{1,2,\dots,n-\kappa_{G,H}-1\}. $$
In terms of Theorem \ref{TheoremFM}.c), this yields that $u \in \ker[j_s:D^{n,0}_s\to E^{n-s,s}_s]$ for $1 \leq s \leq n-\kappa_{G,H}-1$, so it follows directly from Theorem \ref{TheoremFM}.c) that $u \in D^{n,0}_{n-\kappa_{G,H}}$. 
\end{proof}

This has an immediate consequence for sectional categories of subgroup inclusions.
\begin{theorem}
\label{TheoremSecatKappa}
	Let $G$ be a geometrically finite group and $H \leq G$ be a subgroup. Then 
	$$\secat(H \hookrightarrow G) \geq \cd(G)-\kappa_{G,H}.$$
\end{theorem}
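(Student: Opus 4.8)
The plan is to deduce this lower bound directly from Proposition \ref{PropositionDkappa} together with Theorem \ref{TheoremSecatDp}. The strategy is to produce, for a suitable $n$, a nonzero class $u \in H^n(G;A)$ for some $\ZG$-module $A$ that survives far enough in the filtration of the spectral sequence to force $\omega^p \neq 0$ for $p = \cd(G) - \kappa_{G,H}$. First I would set $n := \cd(G)$. By the definition of cohomological dimension, there exists a $\ZG$-module $A$ and a nonzero class $u \in H^{\cd(G)}(G;A) = D^{\cd(G),0}_0$. (Here geometric finiteness guarantees $\cd(G) < \infty$, so this top-dimensional nonvanishing makes sense and such an $A$ exists.)

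Next I would apply Proposition \ref{PropositionDkappa} with this choice of $n = \cd(G)$. We may assume $\cd(G) \geq \kappa_{G,H}$, since otherwise the asserted bound $\cd(G) - \kappa_{G,H}$ is non-positive and the inequality $\secat(H \hookrightarrow G) \geq \cd(G) - \kappa_{G,H}$ holds trivially (sectional category is always $\geq 0$). Under this assumption the hypothesis $n \geq \kappa_{G,H}$ of Proposition \ref{PropositionDkappa} is met, and the proposition yields
$$u \in D^{\cd(G),0}_{\cd(G)-\kappa_{G,H}}.$$
Since $u \neq 0$, this says that the group $D^{\cd(G),0}_{\cd(G)-\kappa_{G,H}}$ is nonzero.

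Finally I would invoke Theorem \ref{TheoremSecatDp} with the pair $(n,p) = (\cd(G), \cd(G)-\kappa_{G,H})$, noting that $p \leq n$ holds automatically since $\kappa_{G,H} \geq 0$. Because $D^{n,0}_p \neq \{0\}$, the theorem gives $\omega^p \neq 0$ and hence
$$\secat(H \hookrightarrow G) \geq p = \cd(G) - \kappa_{G,H},$$
which is exactly the claim.

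I do not anticipate a serious obstacle here, as all the heavy lifting has already been carried out in the preceding propositions; the proof is essentially a matter of assembling Proposition \ref{PropositionDkappa} and Theorem \ref{TheoremSecatDp} with the correct numerical choices. The only point demanding a little care is the edge case $\cd(G) < \kappa_{G,H}$, where Proposition \ref{PropositionDkappa} does not apply but the bound is vacuous; this should be dispatched with a one-line remark before proceeding with the main argument.
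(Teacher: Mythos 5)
Your proof is correct and follows essentially the same route as the paper: set $n = \cd(G)$, pick a nonzero class $u \in H^{\cd(G)}(G;A)$, apply Proposition \ref{PropositionDkappa} to conclude $u \in D^{\cd(G),0}_{\cd(G)-\kappa_{G,H}}$, and invoke Theorem \ref{TheoremSecatDp}. Your explicit dispatch of the vacuous edge case $\cd(G) < \kappa_{G,H}$ is a minor tidiness the paper leaves implicit.
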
 
\begin{proof}
	Put $n := \cd(G)$, let $A$ be a left $\ZG$-module with $H^n(G;A) \neq \{0\}$ and let $u \in H^n(G;A)$ with $u \neq 0$. It follows from Proposition \ref{PropositionDkappa} that $u \in D^{n,0}_{n-\kappa_{G,H}}$. Thus, we obtain from Theorem \ref{TheoremSecatDp} that $\secat(H \hookrightarrow G) \geq n-\kappa_{G,H}$.
\end{proof}

\begin{remark}
	If $H$ is not a self-normalizing subgroup of $G$, then there exists some non-trivial element $x \in N_G(H) \setminus H$ such that $ H \cap xHx^{-1} = H $. Thus,  $\kappa_{G,H} = \cd(H)$ in this case and Theorem \ref{TheoremSecatKappa} yields $$ \secat(H \hookrightarrow G) \geq \cd(G) - \cd(H). $$ This is, of course, the case if we consider a normal subgroup $N \triangleleft G$. In this situation, with $Q=G/N$, and under the necessary assumption that $\cd(Q) < \infty$, we obtain $$ \cd(G) - \cd(N) \leq \secat(N \hookrightarrow G) \leq \cd(Q). $$ In Theorem \ref{TheoremSecatNormal} we have seen a condition for the sectional category to reach the top dimension. It is interesting to remark as well that if $H^{\cd(N)}(N;\ZZ[N])$ is free abelian then, by \cite[Theorem 5.5]{Bieri81}, we have that $\cd(Q) = \cd(G) - \cd(N)$ and thus $\secat(N \hookrightarrow G) = \cd(G) - \cd(N)$ under such assumption. 
\end{remark}

Before going on studying general sectional categories of subgroup inclusions, we next want to check explicitly that our Theorem \ref{TheoremSecatKappa} indeed recovers the corresponding result from \cite{FarMes20}.

\section{Applications to topological complexity}
\label{SectionAppl}
\subsection{Sequential topological complexity of aspherical spaces}

The notion of \emph{sequential} or \emph{higher topological complexities} was introduced by Y. Rudyak in \cite{RudyakHigher} as a generalization of topological complexity which models the motion planning problem for robots that are supposed to make some pre-determined intermediate stops along their ways. We briefly recall their definition.

Let $X$ be a path-connected topological space. For each $r \in \NN$ with $r \geq 2$ the map 
$$p_r: PX\to X^r, \qquad p_r(\gamma) = \left(\gamma(0),\gamma\big(\tfrac{1}{r-1}\big),\gamma\big(\tfrac{2}{r-1}\big),\dots,\gamma\big(\tfrac{r-2}{r-1}\big), \gamma(1)  \right),$$
is a fibration. The \emph{$r$-th sequential topological complexity of $X$} is defined as
$$\TC_r(X) := \secat(p_r:PX \to X^r).$$
Note that by definition $\TC_2(X)=\TC(X)$. The sequential topological complexities of aspherical spaces have been studied by M. Farber and J. Oprea in \cite{FOSequ}. In particular, given a geometrically finite group $\pi$, it is shown in \cite[Lemma 4.2 and Corollary 4.3]{FOSequ} that $\TC_r(K(\pi,1))$ coincides with the sectional category of the covering of $(K(\pi,1))^r$ that is associated with the diagonal subgroup 
$$\Delta_{\pi,r} := \{(g,g,\dots,g)\in \pi^r \ | \ g \in \pi\}.$$
In the light of Remark \ref{RemarkSecatInclCovering}, this shows that $\TC_r(K(\pi,1))$ is given as the sectional category of the inclusion of $\Delta_{\pi,r}$, i.e.
\begin{equation}
\label{EqTCrsecat}
\TC_r(K(\pi,1))= \secat(\Delta_{\pi,r} \hookrightarrow \pi^r).
\end{equation}
To obtain a tangible lower bound for $\TC_r(K(\pi,1))$ from Theorem \ref{TheoremSecatKappa}, we need to determine 
the value of $\kappa_{\pi^r,\Delta_{\pi,r}}$ more explicitly.

\begin{lemma}
\label{LemmaKappaHigher}
For each $r \in \NN$ with $r \geq 2$, it holds that
$$\kappa_{\pi^r,\Delta_{\pi,r}} = k(\pi) := \max \{\cd(C(g)) \ | \ g \in \pi \setminus \{1\}\},$$	
where $C(g)$ denotes the centralizer of $g \in \pi$.
\end{lemma}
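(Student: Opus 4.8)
The plan is to compute the conjugate intersections $\Delta_{\pi,r} \cap x\Delta_{\pi,r}x^{-1}$ appearing in the definition of $\kappa_{\pi^r,\Delta_{\pi,r}}$ directly, to identify each of them with an intersection of centralizers in $\pi$, and then to match the resulting supremum with $k(\pi)$ by a two-sided inequality.

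First I would fix $x = (x_1,\dots,x_r) \in \pi^r$ and use the canonical isomorphism $\Delta_{\pi,r} \xrightarrow{\cong} \pi$, $(h,\dots,h) \mapsto h$. A diagonal element $(h,\dots,h)$ lies in $x\Delta_{\pi,r}x^{-1}$ precisely when there is some $g \in \pi$ with $h = x_ig x_i^{-1}$ for every $i$; eliminating $g = x_1^{-1}hx_1$ from the first coordinate shows that this happens if and only if $h$ commutes with each $a_i := x_ix_1^{-1}$ for $i = 2,\dots,r$. Hence, under the identification above,
$$\Delta_{\pi,r} \cap x\Delta_{\pi,r}x^{-1} \cong \bigcap_{i=2}^r C(a_i).$$
The condition $x \notin \Delta_{\pi,r}$ means that the $x_i$ are not all equal, which is equivalent to at least one $a_i$ being nontrivial.

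For the upper bound $\kappa_{\pi^r,\Delta_{\pi,r}} \le k(\pi)$, I would pick an index $j$ with $a_j \ne 1$ (possible since $x \notin \Delta_{\pi,r}$) and use the inclusion $\bigcap_{i=2}^r C(a_i) \subseteq C(a_j)$ together with monotonicity of cohomological dimension under subgroups, \cite[Proposition VIII.2.4]{Brown82}, to conclude $\cd\big(\bigcap_{i=2}^r C(a_i)\big) \le \cd(C(a_j)) \le k(\pi)$. Taking the supremum over all admissible $x$ gives the bound. For the reverse inequality, given $g \in \pi \setminus \{1\}$ realizing $\cd(C(g)) = k(\pi)$, I would take $x = (1,g,1,\dots,1)$, so that $a_2 = g$ and $a_3 = \dots = a_r = 1$; this $x$ lies outside $\Delta_{\pi,r}$ and the intersection of centralizers collapses to $C(g)$, whence $\kappa_{\pi^r,\Delta_{\pi,r}} \ge \cd(C(g)) = k(\pi)$.

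The argument is essentially a direct computation, so there is no serious obstacle; the only points requiring care are the correct elimination of the auxiliary element $g$ in the conjugation condition (this is where the intersection of centralizers emerges) and verifying that the witness $x$ for the lower bound genuinely lies outside the diagonal, which is guaranteed precisely by $g \ne 1$. Finally, the maximum defining $k(\pi)$ is attained because the cohomological dimension of any subgroup is bounded by $\cd(\pi) < \infty$ for geometrically finite $\pi$, so the supremum in $\kappa_{\pi^r,\Delta_{\pi,r}}$ is likewise a maximum.
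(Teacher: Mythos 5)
Your proposal is correct and follows essentially the same route as the paper: both identify $\Delta_{\pi,r} \cap x\Delta_{\pi,r}x^{-1}$ with an intersection of centralizers in $\pi$ (the paper obtains the pairwise form $\bigcap_{i\neq j} C(x_j^{-1}x_i)$, while your elimination via the first coordinate gives the equivalent $\bigcap_{i=2}^r C(x_ix_1^{-1})$), then bound above by a single centralizer of a nontrivial element and below via an explicit witness, yours being $(1,g,1,\dots,1)$ versus the paper's $(g,1,\dots,1)$. Your closing remark on attainment of the maximum is a harmless extra precaution not spelled out in the paper.
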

\begin{proof}
Let $x=(x_1,\dots,x_r) \in \pi^r \setminus \Delta_{\pi,r}$ and let $(h,h,\dots,h) \in \Delta_{\pi,r}$. Then 
\begin{align*}
x(h,h,\dots,h)x^{-1} \in \Delta_{\pi,r} \quad &\Leftrightarrow \quad (x_1hx_1^{-1},x_2hx_2^{-1},\dots,x_rhx_r^{-1}) \in \Delta_{\pi,r} \\
&\Leftrightarrow \quad x_1hx_1^{-1}= x_2hx_2^{-1}= \dots =x_rhx_r^{-1}.
\end{align*}
For all $i,j \in \{1,2,\dots,r\}$ we compute that $$ x_ihx_i^{-1}=x_jhx_j^{-1} \quad \Leftrightarrow \qquad  x_j^{-1}x_ih=hx_j^{-1}x_i \quad \Leftrightarrow\quad h \in C(x_j^{-1}x_i). $$
One derives from this observation that 
$$ (h,h,\dots,h) \in \Delta_{\pi,r} \cap x\Delta_{\pi,r}x^{-1} \quad \Leftrightarrow \quad h \in \bigcap_{i \neq j} C(x_j^{-1}x_i).$$
This shows in particular that any subgroup of $\pi^r$ of the form $\Delta_{\pi,r} \cap x\Delta_{\pi,r}x^{-1}$, where $x \notin \Delta_{\pi,r}$, is isomorphic to a subgroup of the centralizer of an element of $\pi\setminus \{1\}$, so we derive that $\kappa_{\pi^r,\Delta_{\pi,r}} \leq k(\pi)$. On the other hand, given an arbitrary $g \in \pi$ with $g \neq 1$, if we put $x_0:= (g,1,\dots,1) \in \pi^r$, then it follows from the above that
$$(h,h,\dots,h) \in \Delta_{\pi,r} \cap x_0\Delta_{\pi,r}x_0^{-1} \quad \Leftrightarrow \quad h \in C(g),$$
so $C(g)$ is indeed isomorphic to a group of the form $\Delta_{\pi,r}\cap x\Delta_{\pi,r}x^{-1}$. This shows that $\kappa_{\pi,r} \geq k(\pi)$ and the two inequalities together show the claim. 
\end{proof}
Thus, we obtain the following consequence of our main lower bound.
\begin{theorem}
\label{TheoremLowerSeqTC}
	Let $\pi$ be a geometrically finite group and let $r \in \NN$ with $r \geq 2$. Then 
	$$\TC_r(K(\pi,1)) \geq r \cdot \cd(\pi)-k(\pi),$$
	where $k(\pi)= \max \{ \cd(C(g)) \ | \ g \in \pi\setminus \{1\}\}$.
\end{theorem}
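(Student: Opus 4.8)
The plan is to feed the diagonal inclusion $\Delta_{\pi,r}\hookrightarrow \pi^r$ into the general lower bound of Theorem \ref{TheoremSecatKappa}, combining the identification \eqref{EqTCrsecat} on one side with the computation of $\kappa_{\pi^r,\Delta_{\pi,r}}$ in Lemma \ref{LemmaKappaHigher} on the other. By \eqref{EqTCrsecat} we have
$$\TC_r(K(\pi,1)) = \secat(\Delta_{\pi,r}\hookrightarrow \pi^r),$$
so it suffices to bound the sectional category of the diagonal inclusion from below. To legitimately apply Theorem \ref{TheoremSecatKappa} I would first verify its hypothesis: $\pi^r$ is geometrically finite, since if $X$ is a finite CW model for $K(\pi,1)$, then $X^r$ is a finite CW model for $K(\pi^r,1)$.

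Applying Theorem \ref{TheoremSecatKappa} with $G=\pi^r$ and $H=\Delta_{\pi,r}$ yields
$$\secat(\Delta_{\pi,r}\hookrightarrow \pi^r)\geq \cd(\pi^r)-\kappa_{\pi^r,\Delta_{\pi,r}},$$
and Lemma \ref{LemmaKappaHigher} identifies $\kappa_{\pi^r,\Delta_{\pi,r}}=k(\pi)$. Thus the entire statement reduces to the identity $\cd(\pi^r)=r\cdot\cd(\pi)$.

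The main---and essentially only substantive---obstacle is precisely this additivity of cohomological dimension. The inequality $\cd(\pi^r)\leq r\cdot\cd(\pi)$ is elementary: tensoring $r$ copies of a length-$\cd(\pi)$ free resolution of $\Z$ over $\Z\pi$ yields a free resolution of $\Z$ over $\Z[\pi^r]$ of length $r\cdot\cd(\pi)$. The reverse inequality is not formal and is where geometric finiteness genuinely enters: since geometrically finite groups are of type $FP$, cohomological dimension is additive with respect to direct products, giving $\cd(\pi^r)=r\cdot\cd(\pi)$ (see \cite{Bieri81}). Substituting this identity together with Lemma \ref{LemmaKappaHigher} into the displayed lower bound then yields $\TC_r(K(\pi,1))\geq r\cdot\cd(\pi)-k(\pi)$, as claimed.
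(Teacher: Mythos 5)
Your reduction is exactly the paper's: combine \eqref{EqTCrsecat} with Theorem \ref{TheoremSecatKappa} applied to $G=\pi^r$, $H=\Delta_{\pi,r}$ (and your check that $\pi^r$ is geometrically finite is a legitimate verification of the hypothesis), then substitute $\kappa_{\pi^r,\Delta_{\pi,r}}=k(\pi)$ via Lemma \ref{LemmaKappaHigher}. Up to that point the argument matches the paper's proof step for step. You also correctly isolate the one substantive remaining input, namely $\cd(\pi^r)=r\cdot\cd(\pi)$, and correctly note that only the direction $\cd(\pi^r)\geq r\cdot\cd(\pi)$ is non-formal (indeed, only that direction is even needed for the stated lower bound).

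The gap is in your justification of that direction. It is not true that type $FP$ alone yields additivity of cohomological dimension over $\ZZ$, and Bieri's notes contain no such theorem without further hypotheses. What type $FP$ buys you is (i) that $\cd(G)$ is detected by group-ring coefficients, i.e.\ $H^{n}(G;\ZG)\neq 0$ for $n=\cd(G)$, and (ii) a K\"unneth isomorphism identifying the top cohomology of a product: for $n_i=\cd(G_i)$ one gets
$$H^{n_1+n_2}(G_1\times G_2;\,M_1\otimes M_2)\;\cong\;H^{n_1}(G_1;M_1)\otimes H^{n_2}(G_2;M_2),$$
the Tor terms vanishing for degree reasons. But the right-hand side can be zero even when both factors are nonzero abelian groups (e.g.\ a divisible group tensored with a torsion group, or $\Q/\Z\otimes\Q/\Z=0$), so nonvanishing of $H^{n_i}(G_i;\ZZ[G_i])$ does not formally propagate to the product. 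This is precisely why Bieri's additivity statements carry extra hypotheses, such as $H^{n}(G;\ZG)$ being free abelian (compare the use of \cite[Theorem 5.5]{Bieri81} elsewhere in the paper, which likewise assumes $\ZZ$-freeness of top cohomology). The fact you actually need --- that cohomological dimension is additive for products of \emph{geometrically finite} groups, with no condition on the top cohomology --- is a genuinely nontrivial recent theorem of Dranishnikov, and it is exactly what the paper invokes at this step (the main result of \cite{Dranish19}). Replacing your appeal to \cite{Bieri81} by that citation closes the gap; everything else in your proposal is sound and coincides with the paper's argument.
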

\begin{proof}
	We derive from Theorem \ref{TheoremSecatKappa} and from \eqref{EqTCrsecat} that 
	\begin{align*}
	\TC_r(K(\pi,1)) \geq \cd(\pi^r) - \kappa_{\pi^r,\Delta_{\pi,r}} \\
	= r \cdot \cd(\pi) - k(\pi),		
	\end{align*}
where in the last step we used Lemma \ref{LemmaKappaHigher} and the main result of \cite{Dranish19} on cohomological dimensions of products of geometrically finite groups.
\end{proof}

\begin{remark}
\label{RemarkTCr}
\begin{enumerate}[(1)]
	\item In the case of $r=2$, the previous theorem yields the lower bound of 
	$$\TC(K(\pi,1))\geq 2\cd(\pi)-k(\pi)$$
	for topological complexity. Although not explicitly stated therein, this inequality is an immediate consequence of the main results of \cite{FarMes20}.
	\item If $\pi$ is a torsion-free hyperbolic group, then $C(g)$ is infinite cyclic for each $g \in \pi^*$, so Theorem \ref{TheoremLowerSeqTC} and Theorem \ref{TheoremSecatInclProperties}.a) imply that 
	$$r \cdot \cd(\pi)-1 \leq \TC_r(K(\pi,1)) \leq r \cdot \cd(\pi)  \qquad \forall r \geq 2$$
	in this case. It has in fact been shown by S. Hughes and K. Li in \cite{HugLi} that indeed $\tc_r(\pi)= r \, \cd(\pi)$ for all torsion-free hyperbolic groups with $\pi \not\cong \Z$ and all $r \geq 2$. However, the methods of \cite{HugLi} only generalize slightly beyond the hyperbolic case and do not yield a general lower bound for geometrically finite groups.
	\item If $\pi$ is a free abelian group, then $\Delta_{\pi,r}$ is a \emph{normal} subgroup of $\pi^r$ with $\pi^r/\Delta_{\pi,r} \cong \pi^{r-1}$. In this case, we derive from Theorem \ref{TheoremSecatNormal} that 
 $$\TC_r(\pi) = \cd(\pi^{r-1}) = (r-1) \cdot \cd(\pi) \qquad \forall r \geq 2.$$
This has already been observed in \cite[Corollary 3.13]{BGRT}.

\item Suppose that $x = (x_1, \cdots, x_r) \in \pi^r$ satisfies $x\Delta_{\pi,r}x^{-1} = \Delta_{\pi,r}$. From the proof of Lemma \ref{LemmaKappaHigher} we can infer that this implies $\pi \subset C(x_jx_i^{-1})$ for every $i \neq j$. But this means, in turn, that $x_j x_i^{-1} \in Z(\pi)$. Therefore $$ N_{\pi^r}(\Delta_{\pi,r}) = \{ (x_1, \cdots, x_r) \in \pi^r \ | \ x_j x_i^{-1} \in Z(\pi), \forall i \neq j  \}. $$ Consequently, if the group $\pi$ satisfies $Z(\pi) = \{ 1 \}$, the diagonal subgroup $\Delta_{\pi,r}$ is self-normalizing. 
	\end{enumerate}
\end{remark}

We want to apply Theorem \ref{TheoremLowerSeqTC} to a certain class of free amalgamated products whose centralizers were studied by T. Lewin. For this purpose, we need to introduce a notion from group theory.

\begin{definition}
	Let $G$ be a group. A subgroup $H \leq G$ is \emph{malnormal} if
	$$x H x^{-1} \cap H = \{1\} \qquad \forall x \in G \smallsetminus H.$$
\end{definition}

In the following, given a group $G$ and $g \in G$ we let $C_G(g)$ denote its centralizer whenever it is ambiguous which group we are referring to.

\begin{corollary}
\label{CorAmalg}
	Let $\pi_1$ and $\pi_2$ be geometrically finite groups and consider a free product with amalgamation $\pi_1 *_H \pi_2$, such that $H$ is malnormal in $\pi_1$ or malnormal in $\pi_2$.	Then for each $r \geq 2$ 
	$$\tc_r(\pi_1 *_H \pi_2) \geq r \cdot \cd(\pi_1 *_H \pi_2) - \max\{k(\pi_1),k(\pi_2)\}.$$
\end{corollary}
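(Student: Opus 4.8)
The plan is to apply Theorem \ref{TheoremLowerSeqTC} directly to the group $\pi := \pi_1 *_H \pi_2$, so that the whole statement reduces to the single cohomological estimate
$$k(\pi) \le \max\{k(\pi_1),k(\pi_2)\}.$$
Granting this together with geometric finiteness of $\pi$, Theorem \ref{TheoremLowerSeqTC} yields $\tc_r(\pi) \ge r\cdot \cd(\pi) - k(\pi) \ge r \cdot \cd(\pi) - \max\{k(\pi_1),k(\pi_2)\}$, which is the claim. That $\pi$ is geometrically finite I would deduce by realizing the amalgam as the homotopy pushout (double mapping cylinder) of finite aspherical complexes $K(\pi_1,1) \leftarrow K(H,1) \to K(\pi_2,1)$: its universal cover is a tree of contractible spaces, hence contractible, so the pushout is a finite $K(\pi,1)$. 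Being a free product with amalgamation of torsion-free groups, $\pi$ is moreover torsion-free, so every nontrivial element has infinite order and each $k(\pi_i) \ge 1$.

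To estimate $k(\pi)$ I would use the Bass--Serre tree $T$ of the splitting $\pi = \pi_1 *_H \pi_2$, on which $\pi$ acts with vertex stabilizers the conjugates of $\pi_1,\pi_2$ and edge stabilizers the conjugates of $H$, preserving the bipartition of $T$ into $\pi_1$--vertices and $\pi_2$--vertices. Since conjugation preserves the cohomological dimension of a centralizer, it suffices to bound $\cd(C(g))$ for representatives of the conjugacy classes of $g \in \pi \setminus \{1\}$, and every such $g$ is either hyperbolic or conjugate into $\pi_1$ or $\pi_2$. Assume for definiteness that $H$ is malnormal in $\pi_1$ (the other case is symmetric). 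The decisive consequence of malnormality is that a nontrivial element lying in a conjugate of $\pi_1$ fixes \emph{at most one} edge of $T$ at each $\pi_1$--vertex it fixes, since two distinct edges at such a vertex have stabilizers that are distinct conjugates of $H$ inside a conjugate of $\pi_1$, and these meet trivially.

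For $g$ hyperbolic, $C(g)$ preserves the axis of $g$; by the malnormality property, an element of $C(g)$ fixing the axis pointwise would fix two distinct edges at some $\pi_1$--vertex of the axis, hence be trivial. Thus $C(g)$ embeds into the isometry group $D_\infty$ of the axis, and being torsion-free it is infinite cyclic, so $\cd(C(g)) = 1 \le \max\{k(\pi_1),k(\pi_2)\}$. For $g$ conjugate into $\pi_1$ or $\pi_2$ but not into $H$, one checks that $\mathrm{Fix}(g)$ is a single vertex, so $C(g)$ lies in a vertex stabilizer and agrees with a centralizer $C_{\pi_1}(g')$ or $C_{\pi_2}(g')$ up to conjugacy, giving $\cd(C(g)) \le \max\{k(\pi_1),k(\pi_2)\}$. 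The essential case is when $g$ is conjugate into $H$; here I would show, again via the malnormality property, that every $\pi_1$--vertex of $\mathrm{Fix}(g)$ is a leaf, so that $\mathrm{Fix}(g)$ is a star with a \emph{unique} $\pi_2$--vertex at its center. Any element of $C(g)$ preserves $\mathrm{Fix}(g)$ and the bipartition, hence fixes that central $\pi_2$--vertex; therefore $C(g)$ lies in a conjugate of $\pi_2$ and $\cd(C(g)) \le k(\pi_2)$.

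Collecting the cases gives $k(\pi) \le \max\{k(\pi_1),k(\pi_2)\}$ and hence the corollary; the centralizer description obtained this way is essentially the one due to Lewin. I expect the main obstacle to be the ``conjugate into $H$'' case — specifically, verifying rigorously that $\mathrm{Fix}(g)$ is a star centered at a single $\pi_2$--vertex, which is precisely what forces $C(g)$ into one conjugate of $\pi_2$ — together with the bookkeeping needed to guarantee geometric finiteness of the amalgam, which quietly requires $H$ itself to admit a finite $K(H,1)$.
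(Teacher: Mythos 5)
Your argument is correct, and its top-level structure coincides with the paper's: both reduce the corollary to the single estimate $k(\pi_1 *_H \pi_2) \leq \max\{k(\pi_1),k(\pi_2)\}$ and then invoke Theorem \ref{TheoremLowerSeqTC} together with $\cd(\pi^r)=r\cdot\cd(\pi)$. The difference is where that estimate comes from: the paper simply cites \cite[Theorem 2]{Lewin} --- for $g\neq 1$ the centralizer $C_\pi(g)$ is infinite cyclic or isomorphic to a centralizer in a factor --- and is done in four lines, whereas you reprove this classification from scratch via the Bass--Serre tree. Your three cases all check out: for hyperbolic $g$, the axis alternates between $\pi_1$- and $\pi_2$-vertices, malnormality makes the stabilizers of two distinct edges at a $\pi_1$-vertex intersect trivially, so the pointwise stabilizer of the axis is trivial and $C(g)$ embeds in $D_\infty$, hence is infinite cyclic by torsion-freeness of $\pi$ (itself correct: elliptic torsion would lie in a conjugate of the torsion-free factors); for elliptic $g$ not conjugate into $H$, $\mathrm{Fix}(g)$ is a single vertex because each edge at the vertex stabilized by $\pi_i$ has stabilizer a $\pi_i$-conjugate of $H$ --- note this case needs no malnormality at all; and in the case $g$ conjugate into $H$, malnormality forces every $\pi_1$-vertex of $\mathrm{Fix}(g)$ to be a leaf, so by bipartiteness $\mathrm{Fix}(g)$ contains a unique $\pi_2$-vertex and is a star around it, which $C(g)$ must fix since it preserves $\mathrm{Fix}(g)$ and the vertex types; thus $C(g)=C_{c\pi_2c^{-1}}(g)$ for some $c$, giving $\cd(C(g))\leq k(\pi_2)$. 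What your route buys is self-containedness (essentially a proof of Lewin's theorem in the form needed); what the paper's buys is brevity. Your closing remark on geometric finiteness is also a genuine point that the paper passes over in silence: Theorem \ref{TheoremLowerSeqTC} requires $\pi_1 *_H \pi_2$ to be geometrically finite, and your double-mapping-cylinder construction (whose universal cover is a tree of contractible pieces, hence contractible) delivers a finite $K(\pi,1)$ only when $H$ itself admits a finite $K(H,1)$ --- a hypothesis absent from the statement and not automatic for subgroups of geometrically finite groups. Flagging this implicit assumption is a point of care on your side, not a gap in your proof.
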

\begin{proof}
Put $\pi:= \pi_1 *_H \pi_2$ and	let $g \in \pi$, $g \neq 1$. By \cite[Theorem 2]{Lewin}, the centralizer $C_\pi(g)$ is infinite cyclic or isomorphic to $C_{\pi_1}(g)$ or $C_{\pi_2}(g)$. In the first case, it holds that $\cd(C_\pi(g))=1$, while in the other two cases it holds that $\cd(C_\pi(g))\leq k(\pi_1)$ or $\cd(C_\pi(g))\leq k(\pi_2)$, respectively. Since $g$ was chosen arbitrarily, this yields that
	$$k(\pi) \leq \max\{1,k(\pi_1) ,k(\pi_2) \}=\max\{k(\pi_1),k(\pi_2)\},$$
	so the claim follows immediately from Theorem \ref{TheoremLowerSeqTC}.
\end{proof}

\begin{remark}
	For explicit computations using Corollary \ref{CorAmalg}, there are some general results about cohomological dimensions of free amalgamated products that come in handy. More precisely, let $\pi_1$ and $\pi_2$ be groups of finite cohomological dimension and consider a free product with amalgamation $\pi_1 *_H \pi_2$. It is shown in \cite[Proposition 6.1]{Bieri81} that
$$\max\{\cd (\pi_1) ,\cd (\pi_2) \}\leq \cd (\pi_1*_H \pi_2) \leq \max\{\cd (\pi_1) ,\cd (\pi_2) \}+1.$$
and that a necessary condition for $\cd(\pi_1*_H\pi_2)= \max\{ \cd (\pi_1),\cd (\pi_2)\}+1$ to hold is that $\cd(\pi_1)=\cd(\pi_2)$. It is further shown in \cite[Corollary 6.5]{Bieri81} that a sufficient condition for this equality is that both $\pi_1$ and $\pi_2$ are of type $FP_\infty$ and that $H$ is of finite index both in $\pi_1$ and in $\pi_2$.
\end{remark}

\subsection{Parametrized topological complexity of epimorphisms} 
The parametrized topological complexity of a fibration has been introduced by D. Cohen, M. Farber and S. Weinberger in \cite{CFW}. Given a fibration $p:E \to B$, one considers $E^I_B$ as the space of all continuous paths $\gamma \colon I:=[0,1] \rightarrow E$ in a single fibre of $p$, i.e. such that the path $p \circ \gamma$ is constant. Define the space $$ E \times_B E = \{ (e,e') \in E \times E\  \vert \  p(e) = p(e') \} $$ of all possible pairs of configurations lying in the same fibre of $p$. Then, the map $$ \Pi : E^I_B \to E \times_B E \qquad \Pi(\gamma) = (\gamma(0), \gamma(1))  $$  is a fibration with fibre $\Omega X$. The parametrized topological complexity of $p$ is defined as
$$\TC[p:E\to B]= \secat(\Pi:E^I_B \to E \times_B E).$$
We want to apply the results previously obtained to the parametrized topological complexity of group epimorphisms. This algebraic variant of parametrized topological complexity was defined and investigated by M. Grant in \cite{Grant22}. Given two groups $G$ and $Q$ and an epimorphism $\rho:G \twoheadrightarrow Q$ there exists a fibration $f_{\rho}:K(G,1) \to K(Q,1)$, whose fibre is path-connected and which induced $\rho$ on the level of fundamental groups. Moreover, it is shown in loc. cit. that $\TC[f_\rho:K(G,1)\to K(Q,1)]$ is independent of the choice of $f_\rho$ and that 
$$\TC[f_\rho:K(G,1)\to K(Q,1)]= \secat(\Delta_G \hookrightarrow G \times_Q G) =: \TC[\rho:G \twoheadrightarrow Q].$$
Here, $\Delta_G= \{(g,g) \in G \times G\ | \ g \in G\}$ denotes the diagonal subgroup and 
$$G \times_Q G := \{(x,y) \in G \times G \ | \ \rho(x)=\rho(y)\}.$$
We discuss an alternative description of these pullback groups in the following lemma. 
\begin{lemma}
	\label{LemmaGQepi}
	Let $G,Q$ be groups, let $\rho: G \to Q$ be an epimorphism. Then 
		$$G \times_Q G = ((\ker \rho) \times 1)\cdot \Delta_G.$$	
\end{lemma}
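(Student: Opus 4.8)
The statement is the set equality $G \times_Q G = ((\ker\rho)\times 1)\cdot \Delta_G$, so the plan is simply to prove the two inclusions by unwinding the definitions. Throughout I write $N := \ker\rho$ and recall that $G \times_Q G = \{(x,y)\in G\times G \mid \rho(x)=\rho(y)\}$, while $\Delta_G = \{(g,g) \mid g \in G\}$ and $(N\times 1)\cdot \Delta_G = \{(ng,g) \mid n \in N,\ g \in G\}$ as a product of subsets of the group $G\times G$.

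\emph{First inclusion $\supseteq$.} I would take a typical element $(ng,g)$ of $(N\times 1)\cdot\Delta_G$, with $n \in N$ and $g \in G$, and compute its image under $\rho$ in each coordinate: since $\rho(n)=1$, we get $\rho(ng)=\rho(n)\rho(g)=\rho(g)$, which is exactly the condition defining $G\times_Q G$. Hence $(ng,g)\in G\times_Q G$. This direction is immediate.

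\emph{Second inclusion $\subseteq$.} Conversely, I would start with an arbitrary $(x,y)\in G\times_Q G$, so that $\rho(x)=\rho(y)$. The key observation is to set $n := xy^{-1}$ and check that $n\in N$: indeed $\rho(xy^{-1})=\rho(x)\rho(y)^{-1}=1$, so $n=xy^{-1}\in \ker\rho = N$. Then $(x,y)=(ny,y)=(n,1)\cdot(y,y)$ displays $(x,y)$ as an element of $(N\times 1)\cdot\Delta_G$, taking $g:=y$ in the parametrization above. This establishes the reverse inclusion and completes the proof.

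There is no real obstacle here; the only point requiring a moment's care is the bookkeeping in the second inclusion, namely choosing the correct coordinate to strip off so that the leftover factor $xy^{-1}$ lands in $N$ rather than, say, $y^{-1}x$ (which would force the normal factor into the second coordinate instead). Since $N=\ker\rho$ is normal in $G$ one could equally well write the product in the other order, but the stated form $(N\times 1)\cdot\Delta_G$ dictates factoring on the \emph{left} of the first coordinate, which is why $n=xy^{-1}$ is the right choice.
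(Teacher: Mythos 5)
Your proof is correct and follows essentially the same route as the paper's: the easy inclusion by computing $\rho(ng)=\rho(g)$, and the reverse inclusion by factoring $(x,y)=(xy^{-1},1)\cdot(y,y)$ with $xy^{-1}\in\ker\rho$. Your closing remark about why the normal factor must be stripped off on the left of the first coordinate is a nice touch, but the argument itself matches the paper verbatim in substance.
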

\begin{proof}
Let $ k \in \ker \rho$ and $g \in G$. Then, since $\rho$ is a homomorphism, $\rho(kg) = \rho(k)\cdot \rho(g) = \rho(g)$, so that $(kg,g) \in G \times_Q G$. Conversely, let $(g_1,g_2) \in G \times_Q G$. Then
	$$\rho(x)=\rho(y) \quad \Leftrightarrow \rho(x)(\rho(y))^{-1}=1 \quad \Leftrightarrow \quad \rho(xy^{-1})=1 \quad\Leftrightarrow \quad xy^{-1} \in \ker \rho. $$
	Thus $(x,y) = (xy^{-1},1) \cdot (y,y) \in ((\ker \rho )\times 1)\cdot \Delta_G$.
\end{proof}

We now want to apply our results on sectional categories to this setting. The following statement is a straightforward application of Theorem \ref{TheoremSecatKappa}.
\begin{theorem}
	\label{TheoremTCepi}
	Let $G$ and $Q$ be geometrically finite groups and let $\rho: G \twoheadrightarrow Q$ be an epimorphism.
	Then 
	$$\TC[\rho:G \to Q]\geq \cd(G\times_Q G)-k(\rho), $$
	where	
	$$k(\rho)= \max \{\cd( C(g)) \ | \ g \in \ker \rho, \ g \neq 1 \}.$$
\end{theorem}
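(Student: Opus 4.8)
The plan is to apply Theorem \ref{TheoremSecatKappa} directly to the subgroup inclusion $\Delta_G \hookrightarrow G\times_Q G$, exploiting the identification $\TC[\rho\colon G\twoheadrightarrow Q]=\secat(\Delta_G\hookrightarrow G\times_Q G)$ recalled above. Writing $\Gamma:=G\times_Q G$, which is geometrically finite since it is a subgroup of the geometrically finite group $G\times G$ of the same cohomological dimension, Theorem \ref{TheoremSecatKappa} immediately yields
\[
\secat(\Delta_G\hookrightarrow\Gamma)\geq \cd(\Gamma)-\kappa_{\Gamma,\Delta_G}.
\]
Thus everything reduces to the purely group-theoretic computation of the invariant $\kappa_{\Gamma,\Delta_G}=\sup\{\cd(\Delta_G\cap x\Delta_Gx^{-1})\ |\ x\in\Gamma\setminus\Delta_G\}$, and to identifying it with $k(\rho)$.

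The main step, which I expect to be the heart of the argument, is to show $\kappa_{\Gamma,\Delta_G}=k(\rho)$, in close analogy with Lemma \ref{LemmaKappaHigher}. First I would compute the intersection $\Delta_G\cap x\Delta_Gx^{-1}$ for $x=(x_1,x_2)\in\Gamma$. As in the proof of Lemma \ref{LemmaKappaHigher}, an element $(h,h)\in\Delta_G$ lies in $x\Delta_Gx^{-1}$ precisely when $x_1hx_1^{-1}=x_2hx_2^{-1}$, i.e. when $h\in C(x_2^{-1}x_1)$. Hence $\Delta_G\cap x\Delta_Gx^{-1}\cong C(x_2^{-1}x_1)$ via $(h,h)\mapsto h$. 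The crucial observation specific to this setting is that for $x=(x_1,x_2)\in\Gamma=G\times_Q G$ we have $\rho(x_1)=\rho(x_2)$, so that $x_2^{-1}x_1\in\ker\rho$; and the condition $x\notin\Delta_G$ is equivalent to $x_1\neq x_2$, i.e. $x_2^{-1}x_1\neq 1$. Therefore the elements $g=x_2^{-1}x_1$ that arise are exactly the nontrivial elements of $\ker\rho$, and this immediately gives
\[
\kappa_{\Gamma,\Delta_G}=\sup\{\cd(C(g))\ |\ g\in\ker\rho,\ g\neq 1\}=k(\rho).
\]

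For the reverse containment ensuring the supremum is attained over all of $\ker\rho\setminus\{1\}$, I would note that given any nontrivial $g\in\ker\rho$, the element $x=(g,1)$ lies in $\Gamma$ by Lemma \ref{LemmaGQepi}, lies outside $\Delta_G$, and satisfies $x_2^{-1}x_1=g$, so every centralizer $C(g)$ with $g\in\ker\rho\setminus\{1\}$ genuinely occurs as such an intersection. Combining this equality $\kappa_{\Gamma,\Delta_G}=k(\rho)$ with the lower bound from Theorem \ref{TheoremSecatKappa} and the identification of $\TC[\rho]$ as a sectional category gives the claimed inequality $\TC[\rho\colon G\to Q]\geq\cd(G\times_Q G)-k(\rho)$. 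The only potential obstacle is verifying geometric finiteness of $\Gamma$ and that $\cd(C(g))$ is computed in $G$ rather than in $\Gamma$; but since $C_\Gamma(g)\cong C_G(g)$ under the diagonal identification and $G$ is geometrically finite, these subtleties are routine and do not require the product-dimension result of \cite{Dranish19} that was needed in Theorem \ref{TheoremLowerSeqTC}.
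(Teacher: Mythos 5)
Your proposal is correct and in substance it is the paper's own proof: both apply Theorem~\ref{TheoremSecatKappa} to the inclusion $\Delta_G \hookrightarrow G\times_Q G$ and then identify $\kappa_{G\times_Q G,\Delta_G}=k(\rho)$. The only cosmetic difference is that the paper first uses Lemma~\ref{LemmaGQepi}, together with the invariance of $\Delta_G$ under conjugation by diagonal elements, to normalize the conjugator to the form $(x,1)$ with $x\in\ker\rho\setminus\{1\}$ before computing the intersection, whereas you compute $\Delta_G\cap x\Delta_G x^{-1}$ for a general $x=(x_1,x_2)$ exactly as in Lemma~\ref{LemmaKappaHigher} and invoke Lemma~\ref{LemmaGQepi} only to see that every nontrivial element of $\ker\rho$ actually occurs.

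Two small slips, neither fatal. First, your conjugation computation is off by an inverse: $(h,h)\in x\Delta_G x^{-1}$ means $x^{-1}(h,h)x\in\Delta_G$, i.e.\ $x_1^{-1}hx_1=x_2^{-1}hx_2$, i.e.\ $h\in C(x_2x_1^{-1})$; the condition $x_1hx_1^{-1}=x_2hx_2^{-1}$ that you wrote characterizes $(h,h)\in x^{-1}\Delta_G x$ instead. This is harmless for the theorem: $C(x_2x_1^{-1})$ and $C(x_2^{-1}x_1)$ are conjugate in $G$ and hence have equal cohomological dimension, both $x_2x_1^{-1}$ and $x_2^{-1}x_1$ lie in $\ker\rho\setminus\{1\}$ precisely when $x\notin\Delta_G$, and $x\mapsto x^{-1}$ permutes $(G\times_Q G)\setminus\Delta_G$, so the supremum defining $\kappa_{G\times_Q G,\Delta_G}$ is unaffected. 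Second, your justification that $G\times_Q G$ is geometrically finite --- ``a subgroup of the geometrically finite group $G\times G$ of the same cohomological dimension'' --- is not a valid argument: subgroups of geometrically finite groups need not be geometrically finite (e.g.\ Bestvina--Brady kernels inside right-angled Artin groups), and the asserted equality of cohomological dimensions is also false in general, since by Lemma~\ref{LemmaSemidir} one has $\cd(G\times_Q G)\leq \cd(G)+\cd(\ker\rho)$, which is typically smaller than $\cd(G\times G)$. To be fair, the paper applies Theorem~\ref{TheoremSecatKappa} to $G\times_Q G$ without comment on this hypothesis either; what the lower-bound machinery genuinely needs is finiteness of $\cd(G\times_Q G)$, which follows from Lemma~\ref{LemmaSemidir} together with $\cd(\ker\rho)\leq\cd(G)<\infty$, so the safe course is to drop your claim rather than to rely on it.
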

\begin{proof}
	It follows from Theorem \ref{TheoremSecatKappa} that $\TC[\rho:G \to Q]\geq \cd(G\times_Q G)-\ell$, where 
	$$\ell:=\kappa_{G\times_Q G,\Delta_G}= \max\{ \cd(\Delta_G \cap z\Delta_G z^{-1}) \ |\ z \in (G \times_Q G) \setminus \Delta_G\}.$$
	It only remains to show that $k(\rho) = \ell$. It follows from Lemma \ref{LemmaGQepi} that
	\begin{align*}
		\ell&= 	\max\{ \cd(\Delta_G \cap (xh,h)\Delta_G(xh,h)^{-1}) \ |\ x \in \ker \rho \setminus \{1\}, \ h \in G\} \\
		&=\max\{ \cd(\Delta_G \cap (x,1)\Delta_G(x,1)^{-1}) \ |\ x \in \ker \rho \setminus \{1\}\} ,
	\end{align*}
	since, evidently, $(h,h)\Delta_G(h,h)^{-1}= \Delta_G$ for all $h \in G$. 
	
	Let $x \in \ker \rho$ with $x \neq 1$ and let $g \in G$. Then 
\begin{align*}
&(g,g) \in (x,1)\Delta_G(x,1)^{-1} \quad \Leftrightarrow	\quad \exists h \in G: \ \ (g,g)=(x,1)(h,h)(x,1)^{-1}=(xhx^{-1},h) \\
&\Leftrightarrow \quad \exists h \in G: \ \ g=h \ \wedge \ g=xhx^{-1} \\
&\Leftrightarrow \quad g=xgx^{-1} \quad \Leftrightarrow \quad x=g^{-1}xg \quad \Leftrightarrow \quad g^{-1} \in C(x) \quad \Leftrightarrow \quad g \in C(x).
\end{align*}
	Moreover, the map $C(x)\to \Delta_G \cap (x,1)\Delta_G(x,1)^{-1}$, $g \mapsto (g,g)$, is easily seen to be a group isomorphism. We immediately derive that $k(\rho) = \ell$.
\end{proof}
To study the cohomological dimension of $G \times_QG$, we can characterize such pullback groups as semidirect products.
\begin{lemma}
	\label{LemmaSemidir}
	Let $G,Q$ be groups and let $\rho: G \twoheadrightarrow Q$ be an epimorphism. Then 
	$$\Phi: G \times_Q G \to (\ker \rho) \rtimes_\varphi G, \quad \Phi(g,h) = (gh^{-1},h),$$
	is a group isomorphism, where $\varphi: G \to \Aut(\ker \rho)$, $(\varphi(g))(x)= gxg^{-1}$.
\end{lemma}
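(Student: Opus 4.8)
The plan is to verify directly that the explicitly given map $\Phi(g,h) = (gh^{-1},h)$ is a group isomorphism onto the semidirect product $(\ker\rho)\rtimes_\varphi G$, where the group operation on the target is $(a_1,g_1)\cdot(a_2,g_2) = (a_1 \cdot \varphi(g_1)(a_2), g_1g_2) = (a_1 g_1 a_2 g_1^{-1}, g_1 g_2)$. First I would check that $\Phi$ is well-defined, i.e.\ that it lands in the stated target: for $(g,h)\in G\times_Q G$ one has $\rho(g)=\rho(h)$, hence $\rho(gh^{-1}) = \rho(g)\rho(h)^{-1}=1$, so $gh^{-1}\in\ker\rho$ and indeed $\Phi(g,h)\in(\ker\rho)\times G$.

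The core step is to confirm that $\Phi$ is a homomorphism. Given $(g_1,h_1),(g_2,h_2)\in G\times_Q G$, their product in $G\times_Q G$ is the coordinatewise product $(g_1 g_2, h_1 h_2)$, so
\begin{align*}
\Phi((g_1,h_1)(g_2,h_2)) &= \Phi(g_1 g_2, h_1 h_2) = (g_1 g_2 (h_1 h_2)^{-1}, h_1 h_2) \\
&= (g_1 g_2 h_2^{-1} h_1^{-1}, h_1 h_2).
\end{align*}
On the other hand, using the semidirect product multiplication,
\begin{align*}
\Phi(g_1,h_1)\cdot \Phi(g_2,h_2) &= (g_1 h_1^{-1}, h_1)\cdot(g_2 h_2^{-1}, h_2) \\
&= (g_1 h_1^{-1} \cdot h_1 (g_2 h_2^{-1}) h_1^{-1}, h_1 h_2) = (g_1 g_2 h_2^{-1} h_1^{-1}, h_1 h_2),
\end{align*}
where I used $\varphi(h_1)(g_2 h_2^{-1}) = h_1 g_2 h_2^{-1} h_1^{-1}$. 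Comparing the two expressions shows they agree, so $\Phi$ is a homomorphism.

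Finally I would exhibit the inverse. The map $\Psi:(\ker\rho)\rtimes_\varphi G \to G\times_Q G$, $\Psi(a,h) = (ah,h)$, is a candidate; one checks $(ah,h)\in G\times_Q G$ since $\rho(ah)=\rho(a)\rho(h)=\rho(h)$, and a short computation confirms $\Psi\circ\Phi = \id$ and $\Phi\circ\Psi = \id$. This establishes that $\Phi$ is bijective and completes the proof that it is an isomorphism. I do not anticipate any genuine obstacle here: the entire argument is a routine verification, and the only point requiring slight care is tracking the conjugation in the semidirect product multiplication so that the homomorphism computation matches the coordinatewise product in $G\times_Q G$.
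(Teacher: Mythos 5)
Your proposal is correct and follows essentially the same route as the paper: the same direct verification of the homomorphism property via the conjugation $\varphi(h_1)(g_2h_2^{-1}) = h_1 g_2 h_2^{-1} h_1^{-1}$, with bijectivity established by the same map $(a,h)\mapsto (ah,h)$ (which you package as an explicit two-sided inverse, while the paper uses it to verify surjectivity and checks injectivity separately). Your explicit well-definedness checks, that $gh^{-1}\in\ker\rho$ and that $(ah,h)\in G\times_Q G$, are a small but welcome addition the paper leaves implicit.
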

\begin{proof}
	One checks without difficulties that $\Phi$ is injective. Moreover, for each $(x,y) \in (\ker \rho) \rtimes G$ it holds that $\Phi(xy,y)= (x,y)$, so that $\Phi$ is surjective as well. For all $(g_1,h_1),(g_2,h_2) \in G \times_Q G$ we further compute that
	\begin{align*}
		\Phi((g_1,h_1) \cdot (g_2,h_2))&= \Phi(g_1g_2,h_1h_2) = (g_1 g_2h_2^{-1}h_1^{-1},h_1h_2) \\
		&= (g_1h_1^{-1} \cdot h_1g_2h_2^{-1}h_1^{-1},h_1h_2) = (g_1h_1^{-1} (\varphi(h_1))(g_2h_2^{-1}),h_1h_2) \\
		&= (g_1h_1^{-1},h_1)\bullet (g_2h_2^{-1},h_2) = \Phi(g_1,h_1)\bullet \Phi(g_2,h_2).
	\end{align*}
	where $\bullet$ denotes multiplication in $(\ker \rho) \rtimes_\phi G$. Thus, $\Phi$ is an isomorphism. 
\end{proof}

\begin{corollary}
	Let $G$ and $Q$ be geometrically finite groups and let $\rho:G \twoheadrightarrow Q$ be an epimorphism. Then 
	$$TC[\rho: G \twoheadrightarrow Q] \leq \cd(G) + \cd(\ker \rho).$$
\end{corollary}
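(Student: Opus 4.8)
The plan is to reduce the statement to the general upper bound for sectional categories of subgroup inclusions, combined with the semidirect-product description of $G\times_Q G$ supplied by Lemma \ref{LemmaSemidir}. Recall that, as recorded before the statement, Grant's identification gives
\[
\TC[\rho:G\twoheadrightarrow Q] = \secat(\Delta_G \hookrightarrow G\times_Q G),
\]
so it suffices to bound the sectional category of the diagonal inclusion from above. The two ingredients I would combine are (i) the estimate $\secat(\Delta_G\hookrightarrow G\times_Q G)\le \cd(G\times_Q G)$, which is the analogue of Theorem \ref{TheoremSecatInclProperties}.a) applied to the ambient group $G\times_Q G$ (equivalently $\secat(f)\le\cat(Y)$ from Theorem \ref{TheoremSecatProperties}.a) with $Y=K(G\times_Q G,1)$, together with $\cat(K(\Gamma,1))=\cd(\Gamma)$), and (ii) a computation of $\cd(G\times_Q G)$.

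For (ii) I would use Lemma \ref{LemmaSemidir}, which gives $G\times_Q G\cong(\ker\rho)\rtimes_\varphi G$, hence a split extension
\[
1 \to \ker\rho \to G\times_Q G \to G \to 1.
\]
Applying the standard inequality for the cohomological dimension of a group extension (see \cite{Bieri81} or \cite[Chapter VIII]{Brown82}), namely $\cd(N\rtimes Q)\le\cd(N)+\cd(Q)$, yields $\cd(G\times_Q G)\le\cd(\ker\rho)+\cd(G)$. Chaining this with the estimate from (i) produces exactly the claimed bound $\TC[\rho]\le\cd(G)+\cd(\ker\rho)$.

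The main obstacle is the legitimacy of step (i): the ambient group $G\times_Q G$ need not be geometrically finite, since its normal factor $\ker\rho$ may fail even to be finitely generated (already for $G=F_2$, $Q=\Z$), so one cannot simply quote Theorem \ref{TheoremSecatInclProperties}.a) verbatim. To circumvent this I would avoid geometric finiteness altogether and instead build an explicit aspherical model of controlled dimension: starting from a $\cd(G)$-dimensional $K(G,1)$ and a $\cd(\ker\rho)$-dimensional $K(\ker\rho,1)$, realize the split extension as a fibre bundle $K(\ker\rho,1)\to K(G\times_Q G,1)\to K(G,1)$ through the associated Borel-type construction, so that the total space is a $K(G\times_Q G,1)$ of dimension at most $\cd(G)+\cd(\ker\rho)$. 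Then $\secat(\Delta_G\hookrightarrow G\times_Q G)\le\cat(K(G\times_Q G,1))\le\dim$ of this model completes the argument, the only residual caveat being the usual Eilenberg--Ganea dimension ambiguity in cohomological dimension $2$, which the paper treats as harmless throughout.
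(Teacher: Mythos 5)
Your main line of argument is exactly the paper's proof, which is a two-step one-liner: Grant's identification $\TC[\rho]=\secat(\Delta_G\hookrightarrow G\times_Q G)$, then Lemma \ref{LemmaSemidir} giving $G\times_Q G\cong(\ker\rho)\rtimes_\varphi G$, the subadditivity $\cd(N\rtimes G)\le\cd(N)+\cd(G)$, and the bound $\secat(\Delta_G\hookrightarrow G\times_Q G)\le\cd(G\times_Q G)$. Where you diverge is in your (legitimate) observation that Theorem \ref{TheoremSecatInclProperties}.a) is stated only for geometrically finite ambient groups, whereas $G\times_Q G$ can fail to be geometrically finite even when $G$ and $Q$ are (your example $G=F_2$, $Q=\Z$ is apt: the fibre product is finitely generated but not finitely presented, by Bieri--Stallings). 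This is a fair catch that the paper glosses over. However, your repair is heavier than necessary and has a soft spot of its own: to build a $K(G\times_Q G,1)$ of dimension $\cd(G)+\cd(\ker\rho)$ via a ``Borel-type construction'' you need the action $\varphi:G\to\Aut(\ker\rho)$ to be realized by an honest cellular action on a $\cd(\ker\rho)$-dimensional model of $K(\ker\rho,1)$, and realizing a prescribed outer action on a \emph{minimal-dimensional} aspherical complex is not automatic (functorial models such as the bar construction carry the action but are infinite-dimensional). One can patch this by instead using the fibration $K(\ker\rho,1)\to K(G\times_Q G,1)\to K(G,1)$ associated to any extension together with the standard fact that the total space of a fibration with CW base and fibre has a CW model of dimension at most the sum of the dimensions---but even this is unnecessary.

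The cleanest fix, and the one implicit in the paper, is purely algebraic: the inequality $\secat(H\hookrightarrow\Gamma)\le\cat(K(\Gamma,1))$ holds for arbitrary $\Gamma$ by Theorem \ref{TheoremSecatProperties}.a), and the category form of the Eilenberg--Ganea theorem, $\cat(K(\Gamma,1))=\cd(\Gamma)$, holds for \emph{every} group of finite cohomological dimension with no geometric finiteness hypothesis and no dimension-$2$ caveat (the ambiguity you mention concerns geometric dimension, not category; the paper itself invokes the unrestricted statement in Remark \ref{rem:alt}). Combined with $\cd((\ker\rho)\rtimes G)\le\cd(\ker\rho)+\cd(G)$, which follows from the Lyndon--Hochschild--Serre spectral sequence for arbitrary extensions, this yields the bound with no model-building at all. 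So your proof is correct in substance and identical to the paper's in its core, but the geometric detour you take to handle finiteness should be replaced by (or at least justified through) the algebraic statements above.
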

\begin{proof}
 It is well known that the cohomological dimension of a semidirect product is at most the sum of those of its factors. Thus, it follows from Lemma \ref{LemmaSemidir} and the lower bound from Theorem \ref{TheoremSecatInclProperties}.a) that
 $$\TC[\rho:G\twoheadrightarrow Q] \leq \cd(G \times_Q G ) \leq \cd(G) + \cd(\ker \rho).$$
\end{proof}

\begin{corollary}
\label{CorTCupper}
	Let $G$ and $Q$ be geometrically finite groups and let $\rho: G \twoheadrightarrow Q$ be an epimorphism. Assume that $H^n(\ker \rho, \ZZ[\ker \rho])$ is $\ZZ$-free for $n = \cd(\ker \rho)$. Then 
	$$2\cd(G)-\cd(Q)-k (\rho)\leq \TC[\rho:G\twoheadrightarrow Q] \leq 2\cd(G)-\cd(Q), $$
	where $k(\rho) = \max \{\cd(C(g)) \ | \ g \in \ker \rho, \, g \neq 1\}$.
\end{corollary}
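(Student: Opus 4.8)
The plan is to reduce the claimed two-sided estimate to a single exact computation, namely $\cd(G\times_Q G)=2\cd(G)-\cd(Q)$, since both required bounds are then immediate from results already in hand. Indeed, Theorem \ref{TheoremTCepi} supplies the lower bound $\TC[\rho]\geq\cd(G\times_Q G)-k(\rho)$, while Theorem \ref{TheoremSecatInclProperties}.a) applied to the inclusion $\Delta_G\hookrightarrow G\times_Q G$ (using $\TC[\rho]=\secat(\Delta_G\hookrightarrow G\times_Q G)$) gives $\TC[\rho]\leq\cd(G\times_Q G)$. So once the value of $\cd(G\times_Q G)$ is pinned down, substituting it into these two inequalities yields exactly $2\cd(G)-\cd(Q)-k(\rho)\leq\TC[\rho]\leq 2\cd(G)-\cd(Q)$.

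To compute $\cd(G\times_Q G)$ I would invoke Bieri's equality criterion \cite[Theorem 5.5]{Bieri81} twice, each time feeding it the standing hypothesis that $H^n(\ker\rho;\ZZ[\ker\rho])$ is $\ZZ$-free for $n=\cd(\ker\rho)$. First, apply it to the extension $1\to\ker\rho\to G\xrightarrow{\rho}Q\to 1$ (an extension since $\rho$ is surjective, so $Q\cong G/\ker\rho$): as $G$ and $Q$ are geometrically finite all cohomological dimensions in sight are finite, and the criterion produces $\cd(G)=\cd(\ker\rho)+\cd(Q)$, that is $\cd(\ker\rho)=\cd(G)-\cd(Q)$. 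Second, by Lemma \ref{LemmaSemidir} the pullback group splits as $G\times_Q G\cong(\ker\rho)\rtimes_\varphi G$, furnishing the extension $1\to\ker\rho\to G\times_Q G\to G\to 1$ with the very same normal subgroup $\ker\rho$. Applying \cite[Theorem 5.5]{Bieri81} once more — the $\ZZ$-freeness hypothesis on $H^n(\ker\rho;\ZZ[\ker\rho])$ is literally identical, and $\cd(\ker\rho),\cd(G)<\infty$ — gives $\cd(G\times_Q G)=\cd(\ker\rho)+\cd(G)=2\cd(G)-\cd(Q)$, as desired.

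The point that needs care is the verification that Bieri's criterion genuinely applies in both extensions, so that one obtains equalities rather than merely the generic inequality $\cd(\,\cdot\,)\leq\cd(\text{kernel})+\cd(\text{quotient})$: the entire force of the corollary lies in these being equalities, and that is exactly what the $\ZZ$-freeness of $H^{\cd(\ker\rho)}(\ker\rho;\ZZ[\ker\rho])$ delivers. I would explicitly check that the normal factor of the semidirect product in Lemma \ref{LemmaSemidir} is the copy of $\ker\rho$ (so the hypothesis, phrased for $\ker\rho$, is precisely the one the criterion requires) and that $\cd(\ker\rho)<\infty$ as $\ker\rho$ is a subgroup of the finite-dimensional group $G$. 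With $\cd(G\times_Q G)=2\cd(G)-\cd(Q)$ in place, assembling the two bounds from the first paragraph completes the proof.
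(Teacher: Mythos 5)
Your proof is correct and follows essentially the same route as the paper: both arguments compute $\cd(G\times_Q G)=2\cd(G)-\cd(Q)$ by applying \cite[Theorem 5.5]{Bieri81} to the extensions $1\to\ker\rho\to G\xrightarrow{\rho} Q\to 1$ and, via Lemma \ref{LemmaSemidir}, $1\to\ker\rho\to G\times_Q G\to G\to 1$, and then obtain the lower bound from Theorem \ref{TheoremTCepi}. The only cosmetic difference is that the paper gets the upper bound by substituting $\cd(\ker\rho)=\cd(G)-\cd(Q)$ into the preceding corollary's estimate $\TC[\rho]\leq\cd(G)+\cd(\ker\rho)$, whereas you invoke $\TC[\rho]=\secat(\Delta_G\hookrightarrow G\times_Q G)\leq\cd(G\times_Q G)$ directly --- the same bound in substance.
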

\begin{proof}
Since $G$ is geometrically finite and $H^n(\ker \rho;\Z[\ker\rho])$ is $\Z$-free, it follows from \cite[Theorem 5.5]{Bieri81}, that 
\begin{equation}
\label{EqcdBieri}
\cd(\ker \rho) = \cd(G)-\cd(Q).
\end{equation}
The upper bound on $\TC[\rho:G \twoheadrightarrow Q]$ thus follows directly from  Corollary \ref{CorTCupper}. Regarding the lower bound, we derive from Lemma \ref{LemmaSemidir} and again \cite[Theorem 5.5]{Bieri81} that
$$\cd(G \times_Q G) = \cd(G)+ \cd(\ker \rho) \stackrel{\eqref{EqcdBieri}}{=} 2 \cd(G)-\cd(Q). $$
The lower bound is then an immediate consequence of Theorem \ref{TheoremTCepi}.
\end{proof} 

If we want to consider the case of the inclusion of a normal subgroup notice that, for $G$ and $Q$ groups and $\rho:G \twoheadrightarrow Q$ an epimorphism, then $\Delta_G$ is a normal subgroup of $G\times_Q G$ if and only if $\ker \rho \subset Z(G)$, where $Z(G)$ denotes the center of $G$. Indeed, since $(g,h) \in G \times_QG$, it holds that $\rho(g)=\rho(h)$ and thus $g^{-1}h \in \ker \rho$. Thus, if $\ker \rho \subset Z(G)$, this condition is satisfied for all $(g,h) \in G\times_QG$ and $x \in G$. Conversely, if $\Delta_G$ is normal $G\times_QG$, then we derive by taking $(g,h)=(a^{-1},1)$ for $a \in \ker \rho$ that indeed $\ker \rho \subset Z(G)$. As such, we are in the situation that the associated group extension $$ \{1\} \rightarrow  \ker(\rho) \rightarrow G \xrightarrow{\rho} Q \rightarrow \{1\} $$ is central. Therefore, by \cite[Corollary 5.2]{Grant22} we know that $$\TC[\rho: G\twoheadrightarrow Q] = \cd(\ker(\rho)).$$ 

But we can also derive an approach to this case as a consequence of the more general computation provided by Theorem \ref{TheoremSecatNormal}. 

\begin{proposition}
	Let $G$ and $Q$ be geometrically finite groups and let $\rho:G \twoheadrightarrow Q$ be an epimorphism. Assume that $\ker \rho$ lies in the center of $G$ and consider the homomorphism $$\phi: G\times_Q G \to \ker \rho, \qquad \phi(g,h)=gh^{-1}.$$ 
	Then
 $$\cd(\phi:G\times_QG \to \ker \rho) \leq \TC[\rho: G\twoheadrightarrow Q] = \cd(\ker \rho).$$
\end{proposition}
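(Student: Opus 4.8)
The plan is to identify $\phi$ with the quotient projection of $G \times_Q G$ by the normal subgroup $\Delta_G$ and then read off the conclusion from Theorem \ref{TheoremSecatNormal}. The equality $\TC[\rho : G \twoheadrightarrow Q] = \cd(\ker \rho)$ has already been recorded above as a consequence of \cite[Corollary 5.2]{Grant22}, so the real content of the statement is the inequality $\cd(\phi) \le \TC[\rho]$.

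First I would check that $\phi$ is a well-defined epimorphism onto $\ker \rho$. Surjectivity is immediate, since $\phi(k,1) = k$ for every $k \in \ker \rho$ and $(k,1) \in G \times_Q G$. The multiplicativity of $\phi$ is precisely where the hypothesis $\ker \rho \subset Z(G)$ enters: for elements $(g_i,h_i) \in G \times_Q G$ the factors $g_i h_i^{-1}$ lie in $\ker \rho$ and hence in $Z(G)$, so they may be commuted freely past the remaining entries, turning $\phi\big((g_1,h_1)(g_2,h_2)\big)$ into $\phi(g_1,h_1)\,\phi(g_2,h_2)$. A direct computation then gives $\ker \phi = \{(g,h) \in G \times_Q G : g = h\} = \Delta_G$, so by the first isomorphism theorem $\phi$ descends to an isomorphism $(G \times_Q G)/\Delta_G \xrightarrow{\cong} \ker \rho$, under which $\phi$ is exactly the canonical projection.

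With this identification I would apply Theorem \ref{TheoremSecatNormal} to the normal subgroup $\Delta_G \triangleleft G \times_Q G$ --- normality being guaranteed by the centrality of $\ker \rho$, as observed above --- with quotient $\ker \rho$ and projection $\phi$. Since $\cd(-)$ of a homomorphism is unchanged under post-composition with an isomorphism, the lower bound of that theorem reads
$$\cd\big(\phi : G \times_Q G \to \ker \rho\big) \le \secat\big(\Delta_G \hookrightarrow G \times_Q G\big).$$
Recalling that $\secat(\Delta_G \hookrightarrow G \times_Q G) = \TC[\rho : G \twoheadrightarrow Q]$ by definition, and that the latter equals $\cd(\ker \rho)$, this assembles the desired chain $\cd(\phi) \le \TC[\rho] = \cd(\ker \rho)$.

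I expect the only genuine obstacle to be the verification that $\phi$ is a homomorphism with kernel $\Delta_G$; this is the single step in which the centrality assumption $\ker \rho \subset Z(G)$ is indispensable, and everything afterwards is a direct invocation of the already-established Theorem \ref{TheoremSecatNormal} together with the identification $\TC[\rho] = \secat(\Delta_G \hookrightarrow G \times_Q G)$.
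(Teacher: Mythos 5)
Your proposal is correct and follows essentially the same route as the paper: the paper likewise identifies $\phi$ as the quotient projection $G \times_Q G \to (G\times_Q G)/\Delta_G$ composed with the isomorphism onto $\ker\rho$ induced by $(g,h)\mapsto gh^{-1}$ (there via Lemma \ref{LemmaGQepi}, where you instead verify the homomorphism property and $\ker\phi = \Delta_G$ directly and invoke the first isomorphism theorem), and then applies Theorem \ref{TheoremSecatNormal} together with \cite[Corollary 5.2]{Grant22} and the identity $\TC[\rho] = \secat(\Delta_G \hookrightarrow G\times_Q G)$. Your explicit check that centrality of $\ker\rho$ is exactly what makes $\phi$ multiplicative, and your remark that $\cd(-)$ is invariant under post-composition with an isomorphism, are correct and merely make explicit what the paper leaves implicit.
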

\begin{proof}
	We observe using Lemma \ref{LemmaGQepi} that the map $G\times_QG \to \ker \rho$, $(g,h) \mapsto gh^{-1}$, in fact induces a group isomorphism $\psi:(G\times_Q G)/\Delta_G \stackrel{\cong}{\to} \ker \rho$ by the assumption on $\ker \rho$. Since the projection $p:G \times_QG \to (G \times_Q G)/\Delta_G$ is easily seen to satisfy $\psi \circ p=\phi$, we derive from the assumptions, Theorem \ref{TheoremSecatNormal} and \cite[Corollary 5.2]{Grant22} that
	$$\TC[\rho:G \twoheadrightarrow Q] = \cd\left(\quot{(G \times_Q G)}{\Delta_G} \right)= \cd(\ker \rho)$$
	
	$$\TC[\rho:G \twoheadrightarrow Q] \geq \cd(p)=\cd(\phi).$$
\end{proof}


\section{Canonical classes for sequential topological complexity}

We want to revisit sequential topological complexity from a more topological point of view. Note that the Berstein-Schwarz classes that we used in Section 5.1 to derive results for sequential topological complexity were introduced and studied in a completely algebraic way. In this section, we want to introduce topologically defined analogues of Berstein-Schwarz classes for sequential topological complexity that are defined for all topological spaces, not only aspherical ones. The construction generalizes the notion of canonical classes that was introduced by A. Costa and the second author in \cite{Costa} for topological complexity. \medskip 

Throughout this section, let $\pi$ be a group. For each $r \in \NN$ with $r \geq 2$ we put 
$$\TC_r(\pi):= \TC_r(K(\pi,1)).$$

\subsection{The construction of canonical classes}   Let $r \geq 2$ be fixed and put $G:= \pi^r$. We consider the Cartesian power $\pi^{r-1}$ as a left $G$-space with respect to the action
\begin{equation}\label{raction}
( x_1, \dots, x_r) \cdot (g_1, \dots, g_{r-1}) = (x_1g_1x_2^{-1}, x_2g_2x_3^{-1}, \dots, x_{r-1}g_{r-1}x_r^{-1}),
\end{equation}
where $(x_1, \dots, x_r)\in G$ and $(g_1, \dots, g_{r-1})\in \pi^{r-1}$. We view $\Z[\pi^{r-1}]$ as a left $\ZG$-module with respect to this action and consider its augmentation ideal 
$$I_r := \ker \left[\varepsilon: \Z[\pi^{r-1}]\to \ZZ\right]$$
again as a left $\ZG$-module. \medskip 

We consider the map 
\begin{equation}
\label{Eqfr}
f_r: \pi^r \to I_r, \quad f_r(g_1,g_2,\dots,g_r)= (g_1g_2^{-1}-1,g_2g_3^{-1}-1,\dots,g_{r-1}g_r^{-1}-1).
\end{equation}
One checks without difficulties that $f_r$ is a \emph{crossed homomorphism} and as such defines a cohomology class on $X^r$, see \cite[Section VI.3]{Whitehead}.

\begin{definition}
	Let $X$ be a path-connected topological space with $\pi_1(X)\cong \pi$ and let $r \in \NN$ with $r \geq 2$. The \emph{$r$-th canonical class of $X$} is the cohomology class
	$$\vv_r \in H^1(X^r;I_r), \quad \vv_r:=[f_r],$$
	i.e. the class induced by the crossed homomorphism $f_r$.
\end{definition}
Note that this definition generalizes the one from \cite[Section 2]{Costa} dealing with the case of $r=2$.\medskip

We want to describe the classes $\vv_r$ for cell complexes from a more topological point of view. Throughout the following, let $X$ be a cell complex with $\pi_1(X) = \pi$ and let $r\geq 2$ be fixed. Here, we shall suppress the chosen basepoint from the notation. We further let $H_*(Y)$ denote the singular homology of a space $Y$ with integer coefficients. \enskip   

The fibre of the free path fibration $p_r$ of $X$ is $(\Omega X)^{r-1}$, where $\Omega X$ denote the based loop space of $X$.  The action of $G \cong \pi_1(X^r)$ on the fibre of $p_r$ induces a $G$-action on $H_0((\Omega X)^{r-1})$. We view $H_0((\Omega X)^{r-1})$ and the reduced homology group $\widetilde{H}_0((\Omega X)^{r-1})$ as left $\ZG$-modules with respect this action. 

 By \cite[Theorem 1]{Schwarz66}, there is a so-called 
 \emph{homological obstruction} to the existence of a continuous section of $p_r$ over the $1$-skeleton of $X^r$, i.e. a cohomology class 
 $$\theta \in H^1(X^r; \widetilde H_0((\Omega X)^{r-1}))$$ with the property that $p_r$ admits a section over the $1$-skeleton of $X^r$ if and only if $\theta =0$. 
Considering a fixed isomorphism $\pi\cong \pi_1(X)$, we obtain a bijection 
 $$F: \pi_0((\Omega X)^{r-1}) \to \pi_1(X^{r-1})\cong \pi^{r-1}.$$
\begin{lemma}
	The group homomorphism $\Phi: H_0((\Omega X)^{r-1})\to \Z[\pi^{r-1}]$ that is induced by the bijection $F:\pi_0(\Omega X)^{r-1}\to \pi^{r-1}$, is an isomorphism of $\ZG$-modules. 
\end{lemma}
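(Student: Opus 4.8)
The plan is to separate the purely set-theoretic content from the equivariance, which is the only substantive point. First I would recall that for any space $Y$ the zeroth singular homology $H_0(Y)$ is canonically the free abelian group on the set $\pi_0(Y)$ of path components. Applying this to $Y=(\Omega X)^{r-1}$, whose set of path components is $\pi_0((\Omega X)^{r-1})$, and using that $F$ is a bijection onto $\pi^{r-1}$, it is immediate that the induced map $\Phi$ carries a basis bijectively onto a basis and hence is an isomorphism of abelian groups. This reduces the lemma to verifying that $\Phi$ is $G$-equivariant, i.e. that the monodromy action of $G=\pi_1(X^r)$ on $H_0((\Omega X)^{r-1})$ corresponds under $F$ to the action \eqref{raction} on $\Z[\pi^{r-1}]$.

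For the equivariance I would compute the monodromy explicitly. Fix the diagonal basepoint $(x_0,\dots,x_0)$, so that the fibre of $p_r$ is $(\Omega X)^{r-1}$ and a fibre point is a path $\gamma$ with $\gamma(j/(r-1))=x_0$ for $j=0,\dots,r-1$, decomposed into its $r-1$ constituent loops $\gamma_1,\dots,\gamma_{r-1}$ based at $x_0$, with $[\gamma_i]=g_i$. Represent a class $(x_1,\dots,x_r)\in G=\pi^r$ by a loop $L=(L_1,\dots,L_r)$ in $X^r$ with $[L_i]=x_i$, and lift the associated homotopy through $p_r$ starting at $\gamma$. Tracking the $i$-th coordinate, the endpoints of the segment $\gamma_i$ are dragged along $L_i$ and $L_{i+1}$ respectively, since the lift $\gamma^{(s)}$ meets the $i$-th and $(i+1)$-th division points at $L_i(s)$ and $L_{i+1}(s)$.

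The key computation is then the standard \emph{dragging endpoints} identity: if a homotopy of paths has its initial point traversing a loop $u$ and its terminal point traversing a loop $v$, then the terminal path is homotopic rel endpoints to $\bar u \cdot (\text{initial path}) \cdot v$. Applied segmentwise this yields $[\gamma_i']=x_i^{-1}g_i x_{i+1}$ for the forward (right) monodromy. Passing to the left $\ZG$-module convention, in which $g$ acts as the right monodromy of $g^{-1}$, converts this into $g_i\mapsto x_i g_i x_{i+1}^{-1}$, which is exactly \eqref{raction}. Hence $\Phi$ intertwines the two actions and is an isomorphism of $\ZG$-modules.

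The main obstacle is bookkeeping rather than depth: one must match the conventions (left versus right action, and loop reversal) so that the answer lands on \eqref{raction} and not on a variant involving inverses, and one must observe that each interior division point is shared by two adjacent segments, so that each $x_i$ with $2\le i\le r-1$ is dragged in both the $(i-1)$-th and the $i$-th coordinate while $x_1$ and $x_r$ occur only once each. This shared-endpoint structure is precisely what produces the pattern $x_i g_i x_{i+1}^{-1}$ of \eqref{raction}, and verifying it carefully is the one place where an error is easy to make.
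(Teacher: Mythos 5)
Your proof is correct and follows essentially the same route as the paper: reduce to $G$-equivariance (the abelian-group isomorphism being immediate since $H_0$ is the free abelian group on $\pi_0$) and then identify the monodromy action on $\pi_0$ of the fibre with the action \eqref{raction}. The only difference is one of bookkeeping: the paper quotes the monodromy formula $K_\sigma(\omega)\simeq(\alpha_1\omega_1\bar\alpha_2,\dots,\alpha_{r-1}\omega_{r-1}\bar\alpha_r)$ from \cite{Costa} and takes it directly as the left action, whereas you derive the transport formula $g_i\mapsto x_i^{-1}g_ix_{i+1}$ from homotopy lifting and then correctly convert the natural right action into a left one by inverting, landing on the same formula.
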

\begin{proof}
	It is evident that $\Phi$ is a group isomorphism, so it only remains to show its compatibility with the $G$-actions. For this purpose, we need to study the $G$-action on $H_0((\Omega X)^{r-1})$ in greater detail. 
	
Given a fixed fibre of $p_r$ and a loop $\sigma \in \Omega(X^r)$, $\sigma(t)=(\alpha_1(t), \ldots, \alpha_r(t))$ in the base space of $p_r$, we obtain its monodromy or holonomy map 
$$K_\sigma \colon \Omega X^{r-1} \to \Omega X^{r-1}$$
associated to the fibration $p_r$. This monodromy is the $(r-1)$-componentwise version of the one
described in \cite{Costa} and is up to homotopy given as
\begin{equation}
\label{EqMonodrom}
K_\sigma(\omega)\simeq(\alpha_1\omega_1\bar\alpha_2, \alpha_2\omega_2 \bar\alpha_3,\ldots,\alpha_{r-1}\omega_{r-1}\bar\alpha_r)
\end{equation}
where $\bar\alpha$ denotes the inverse path to $\alpha$ and $\omega=(\omega_1,\ldots,\omega_{r-1})$ and where for any two loops $\alpha$ and $\beta$ we let $\alpha\beta$ denote their concatenation.  Let $g=(g_1,\dots,g_r) \in G$ and $[\omega] \in \pi_0((\Omega X)^{r-1})$. The $G$-action $g \cdot [\omega]$ is then given by $\Z$-linearly extending the following construction: consider a loop $\sigma=(\alpha_1,\dots,\alpha_r) \in \Omega(X^r)$ with $[\sigma]=g$ and let
$$g \cdot [\omega] := [K_\sigma(\omega)].$$
Since, apparently, $[\alpha_i]=g_i$ for each $i \in \{1,2,\dots,r\}$, one checks from this description without difficulties that $\Phi$ is indeed a $G$-map with respect to this action and the one described in \eqref{raction}.
\end{proof}
One checks that $\Phi$ restricts to an isomorphism of $\ZG$-modules
$$\varphi: \widetilde{H}_0((\Omega X)^{r-1}) \to I_r.$$

\begin{proposition}
\label{PropObstr}
	The canonical class and the homological obstruction class are related by 
	$$\varphi_*(\theta) = \vv_r.$$
\end{proposition}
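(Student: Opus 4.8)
The plan is to identify both the canonical class $\vv_r$ and the (push-forward of the) homological obstruction $\theta$ as classes arising from crossed homomorphisms $\pi^r \to I_r$, and then to show that these crossed homomorphisms literally agree. Recall that $\vv_r = [f_r]$ is represented by the explicit crossed homomorphism $f_r$ of \eqref{Eqfr}, while $\varphi_*(\theta)$ is obtained by transporting Schwarz's homological obstruction class $\theta \in H^1(X^r;\widetilde H_0((\Omega X)^{r-1}))$ across the $\ZG$-isomorphism $\varphi \colon \widetilde H_0((\Omega X)^{r-1}) \xrightarrow{\cong} I_r$. Since both classes live in $H^1(X^r; I_r) \cong H^1(\pi^r; I_r)$ (using $\pi_1(X^r) = \pi^r$ and the fact that degree-one cohomology with local coefficients is computed on the fundamental group), it suffices to compare them as classes represented by crossed homomorphisms on $\pi^r$.

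First I would recall the standard description of Schwarz's obstruction class in terms of monodromy. For a fibration with fibre $F$ over a complex $Y$, the primary obstruction $\theta \in H^1(Y;\widetilde H_0(F))$ to a section over the $1$-skeleton is, on a $1$-cell representing a loop $\sigma$, given by the difference between the class of a chosen basepoint component of $F$ and its image under the monodromy $K_\sigma$; concretely $\theta(\sigma) = [K_\sigma(\ast)] - [\ast] \in \widetilde H_0(F)$, where $\ast$ is the constant-loop component. This is exactly the crossed-homomorphism formula for the obstruction to extending a section. Using the preceding lemma, the monodromy action of $\sigma = (\alpha_1,\dots,\alpha_r)$ with $[\alpha_i]=g_i$ sends the constant-loop component to the component described by \eqref{EqMonodrom}, and under the $\ZG$-isomorphism $\Phi$ (restricting to $\varphi$) this component corresponds to the element $(g_1g_2^{-1}, g_2g_3^{-1}, \dots, g_{r-1}g_r^{-1}) \in \pi^{r-1}$.

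The key computation is then to evaluate $\varphi_*(\theta)$ on the group element $g = (g_1,\dots,g_r)$. By the above, $\varphi_*(\theta)(g) = \varphi([K_\sigma(\ast)] - [\ast])$, and since $\Phi$ sends the component of the constant loop to $1 \in \pi^{r-1} \subset \Z[\pi^{r-1}]$ and sends $[K_\sigma(\ast)]$ to $(g_1g_2^{-1},\dots,g_{r-1}g_r^{-1})$, we obtain
$$\varphi_*(\theta)(g) = (g_1g_2^{-1},\dots,g_{r-1}g_r^{-1}) - 1 = (g_1g_2^{-1}-1,\, g_2g_3^{-1}-1,\, \dots,\, g_{r-1}g_r^{-1}-1) \in I_r,$$
where the last equality uses that $I_r$ is the augmentation ideal, so subtracting $1$ corresponds coordinatewise to the generators $g_ig_{i+1}^{-1}-1$ spanning $I_r$. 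This is precisely $f_r(g_1,\dots,g_r)$ from \eqref{Eqfr}. Hence the two crossed homomorphisms coincide, and therefore $\varphi_*(\theta) = [f_r] = \vv_r$.

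The main obstacle I anticipate is making the identification of Schwarz's obstruction class with the monodromy-difference crossed homomorphism fully rigorous, and in particular matching conventions so that the constant-loop component corresponds to $1 \in \Z[\pi^{r-1}]$ (equivalently, to the identity tuple in $\pi^{r-1}$) under $F$ and $\Phi$. This is a bookkeeping issue about basepoints and orientations of $1$-cells rather than a conceptual one: one must check that the trivialization of the fibre over the basepoint chosen to define $\theta$ is compatible with the bijection $F \colon \pi_0((\Omega X)^{r-1}) \to \pi^{r-1}$ sending the constant loop to the identity. Once these conventions are pinned down, the equality reduces to the displayed coordinatewise computation, which is routine given the monodromy formula \eqref{EqMonodrom} and the preceding lemma. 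I would also remark that for $r=2$ this recovers exactly the computation of \cite[Lemma 5]{Costa}, providing a consistency check on the sign and basepoint conventions.
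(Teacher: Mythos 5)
Your proof is correct, but it takes a genuinely different route from the paper's official proof of Proposition \ref{PropObstr} --- in fact it essentially anticipates the ``homotopical viewpoint'' that the paper presents as an alternative immediately \emph{after} the proof. The paper's own argument is cellular: assuming a single $0$-cell, it writes down Schwarz's obstruction cocycle $c$ only on the distinguished lifted $1$-cells $\widetilde{e}_j$ of $\widetilde{X}^r$ having a single nontrivial coordinate, computes $c'(\widetilde{e}_j)=\varphi(c(\widetilde{e}_j))$ from the monodromy formula \eqref{EqMonodrom}, converts $c'$ into a crossed homomorphism $k$ following \cite{Whitehead}, and then recovers $k(g_1,\dots,g_r)=f_r(g_1,\dots,g_r)$ on general tuples by a telescoping application of the crossed-homomorphism identity. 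You instead evaluate the monodromy difference $[K_\sigma(\omega_0)]-[\omega_0]$ directly on an arbitrary $g=(g_1,\dots,g_r)$, which yields $f_r(g)$ in one step. The point you flag yourself --- that Schwarz's class, a priori only a cellular cocycle on $1$-cells, is represented on all of $\pi^r$ by $g\mapsto g\cdot[\omega_0]-[\omega_0]$ --- is precisely what the paper's Whitehead-plus-telescoping computation supplies; you can close it cheaply by observing that $g\mapsto g\cdot[\omega_0]-[\omega_0]$ is itself a crossed homomorphism (because the $G$-action is via monodromy, exactly the computation the paper does for $\hat{k}$) that agrees with the cocycle's crossed homomorphism on the $1$-cell generators, and a crossed homomorphism is determined by its values on generators; alternatively, invoke the connecting homomorphism $\partial$ with $\partial(g_1,\dots,g_r)=(g_1g_2^{-1},\dots,g_{r-1}g_r^{-1})$ as computed in \cite{GLO}, as the paper does. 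Your route buys brevity and conceptual clarity (no choices of lifts or distinguished cells, one evaluation instead of a telescoping sum); the paper's route stays self-contained at the chain level, which is what its later naturality argument reuses. One small caution: your displayed ``coordinatewise'' equality
$$(g_1g_2^{-1},\dots,g_{r-1}g_r^{-1})-1=(g_1g_2^{-1}-1,\dots,g_{r-1}g_r^{-1}-1)$$
should not be justified by $I_r$ being ``spanned coordinatewise'' --- $I_r$ is the augmentation ideal of the group ring of the \emph{product} group $\pi^{r-1}$, and the right-hand side is simply the paper's shorthand for the group-ring difference $\phi(g)-(1,\dots,1)$ with $\phi$ as in \eqref{phi}; read this way your computation is exactly right, consistent with the identification $f_r=\psi\circ w$ in Proposition \ref{PropCanonBerstein}, and your $r=2$ consistency check against \cite{Costa} goes through.
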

\begin{proof}
Assume throughout the following that $X$ has a unique $0$-cell $x_0$. Consider the universal covering $\widetilde{X}\to X$ equipped with the induced cell complex structure and consider its $r$-fold power $\widetilde{X}^r \to X^r$ as a universal covering for $X^r$. By construction of $\theta$, a cocycle $$c \in C^1(X^r;\widetilde{H}_0((\Omega X)^{r-1}))=\Hom_{\ZG}\left(C^{\text{cell}}_1(\widetilde{X}^r), \widetilde{H}_0((\Omega X)^{r-1})\right)$$ which represents $\theta$ is obtained as follows:

Let $\omega_0 \in (\Omega X)^{r-1}$ be given such that each component of $\omega_0$ is the constant loop in $x_0$. Given a $1$-cell $e$ of $X^r$ and a path $\gamma_e:[0,1] \to \widetilde{X}^r$ parametrizing $e$, we consider a fixed $1$-cell $\widetilde{e}$ of $\widetilde{X}^r$ which lifts $e$ and put
$$c(\widetilde{e})= [K_{\gamma_e}(\omega_0)]-[\omega_0]. $$
This expression is extended $\ZG$-equivariantly to the free $\ZG$-module $C^{\text{cell}}_1(\widetilde{X}^r)$.

If $e $ is an oriented $1$-cell of $X$ and a fixed lift $\widetilde{e}$ of $e$ as an oriented $1$-cell of $X$, then we have $1$-cells in $X^r$ given by
$$e_1=(e,x_0,\ldots,x_0),\ e_2=(x_0,e,x_0,\ldots,x_0), \ \ldots,\ e_r=(x_0,\ldots,x_0,e),$$
and $1$-cells in $\widetilde{X}^r$ defined analogously and denoted by $\widetilde{e}_1,\widetilde{e}_2,\dots,\widetilde{e}_r$. Given a path $\gamma_e:[0,1]\to X$ which parametrizes $e$, we define paths 
$$\gamma_j:[0,1]\to X^r, \qquad\gamma_j(t) = (x_0,\dots,x_0,\gamma_e(t),x_0,\dots,x_0). $$
where $\gamma_e(t)$ occurs in the $j$-th component of $\gamma_j$ for each $j \in \{1,2,\dots,r\}$. Evidently, $\gamma_j$ parametrizes $e_j$ for each $j$. We observe from \eqref{EqMonodrom} that the monodromy map of $p_r$ is up to homotopy given by 
$$K_{e_j}(\omega_1,\dots,\omega_{r-1}) \simeq (\omega_1,\ldots,\omega_{j-1}\bar \gamma_e,\gamma_e\omega_j,\omega_{j+1},\ldots,\omega_{r-1}). $$ 
Choosing the $\widetilde{e}_j$ as the distinguished lifts of the $e_j$ in the definition of the cocycle $c$, we thus obtain that 
\begin{align*}
c(\widetilde{e}_j) &= [K_{\gamma_j}(\omega_0)]-[\omega_0] \\
&= [(x_0,\dots,x_{0}\bar\gamma_e,\gamma_ex_0,x_0,\dots,x_0)]-[(x_0,x_0,\dots,x_0)]
\end{align*}
where we denoted the constant loop at $x_0$ simply by $x_0$. Here, $\gamma_ex_0$ occurs in the $j$-th component for each $j \in \{1,2,\dots,r-1\}$ and $x_0\bar\gamma_e$ occurs in the $(r-1)$-th component for $j=r$.

Since $c$ represents $\theta$, the class $\varphi_*(\theta)$ is represented by $ c':=\varphi \circ c \in \Hom_{\ZG}(C_1^{\text{cell}}(\widetilde{X}^r),I_r)$. From the definition of $\varphi$ and our computation of $c$, we observe with $g=[\gamma_e] \in \pi$ that
$$c'(\widetilde{e}_j) = (1,\dots,1,g^{-1},g,1,\dots,1)- (1,1,\dots,1)= (0,\dots,0,g^{-1}-1,g-1,0,\dots,0),$$
where $g-1$ occurs in the $j$-th component for each $j\in \{1,2,\dots,r-1\}$ and $g^{-1}-1$ occurs in the $r$-th component for $j=r$. Following the methods carried out in \cite[Section III]{Whitehead}, we can use $c'$ to construct a crossed homomorphism $k: G \to I_r$ which represents $\varphi_*(\theta)$ and obtain that 
$$k(1,\dots,1,g_j,1,\dots,1)= (0,\dots,0,g_j^{-1}-1,g_j-1,0,\dots,0),$$
for $g_j \in \pi$ and all $j \in \{1,2,\dots,r\}$. Using the crossed homomorphism property, we compute from this equation that 
\begin{align*}
k(g_1,\ldots,g_r)&=k(g_1,1,\ldots,1) +\sum_{i=1}^{r-1} (g_1,1,\ldots,1)\cdots (1,\ldots,1,g_i,1,\ldots,1)k(1,\ldots,1,g_{i+1},1,\ldots,1) \\
&= (g_1-1,0,\ldots,0) +\sum_{i=1}^{r-2} (0,\ldots, 0,g_ig_{i+1}^{-1}-1-(g_i-1),g_{i+1}-1,0,\ldots,0) \\
&\qquad \qquad +(0,\ldots,0,g_{r-1}g_r^{-1}-1-(g_r-1)) \\
&=  (g_1g_2^{-1}-1,g_2g_3^{-1}-1,\ldots, g_{r-1}g_r^{-1}-1) \\
&= f_r(g_1,\ldots,g_r).
\end{align*}
Therefore, the homological obstruction obeys $\varphi_*(\theta)=[f_r]=\vv_r$.
\end{proof}

\subsection{A homotopical viewpoint} There is an alternative ``homotopical obstruction'' viewpoint which arises from the fact that measuring the difference in connected
components, which the homological obstruction class does, may be accomplished by using the connecting homomorphism
$$\partial\colon \pi_1(X^r) \to \pi_0((\Omega X)^{r-1})$$
in the exact homotopy sequence associated with the free path fibration $p_r:PX \to X^r$.
Note that $\partial$ arises
from applying $\pi_0$ to a composition of inclusion with monodromy
$$(\Omega X)^r \hookrightarrow (\Omega X)^r \times (\Omega X)^{r-1} \to (\Omega X)^{r-1}.$$
Using the above identifications, we consider $\partial$ as a map $\partial\colon G \to \pi^{r-1}$ and by a standard property of the connecting homomorphism, it is a $G$-map with
$\partial(1)=1$, i.e. the components of the constant loops. To see the difference between connected components of the fibre, we define
$$\hat{k}: G \to I_r, \quad \hat{k}(g) = \partial(g)-1.$$
Then $\hat{k}$ is a crossed homomorphism as we see by
\begin{align*}
\hat{k}(gh)&= \partial(gh)-1 = g\partial(h)-\partial(g) + \partial(g) -1 \\
&= g(\partial(h)-1) + \partial(g)-1 \\
&= g\hat{k}(h) + \hat{k}(g).
\end{align*}
In fact, $\partial$ was computed in \cite[Proposition 2.1]{GLO} (with an opposite sign convention) to be
$$\partial(g_1,\ldots,g_r)= (g_1g_2^{-1},g_2g_3^{-1},\ldots, g_{r-1}g_r^{-1}),$$
from which we see we see that $\hat{k} = k$, where $k$ is the crossed homomorphism from the proof of Proposition \ref{PropObstr}. Consequently, the homological obstruction is $\theta = [\hat{k}]$.

\subsection{Naturality of canonical classes}  
Let $r \geq 2$, let $X$ and $Y$ be path-connected CW complexes and denote the path fibrations as
	$$p^X_r:PX \to X^r, \qquad p^Y_r: PY \to Y^r.$$
	Assume w.l.o.g. that $X$ has a unique $0$-cell $x_0$ and $Y$ has a unique $0$-cell $y_0$ and consider them as basepoints throughout this subsection. Let $f:X \to Y$ be continuous with $f(x_0)=y_0$ and consider the map 
	$$f_\#: (\Omega X)^{r-1} \to (\Omega Y)^{r-1}, \qquad f_\#(\alpha_1,\alpha_2,\dots,\alpha_{r-1}) := (f \circ \alpha_1, f\circ \alpha_2,\dots,f \circ \alpha_{r-1}),$$
where the based loop spaces are considered with the basepoints $x_0$ and $y_0$.  Apparently, $f_\#(\omega_{x_0})=\omega_{y_0}$, where $\omega_{x_0}$ and $\omega_{y_0}$ are in each component given by the constant loops at $x_0$ and $y_0$, respectively. Moreover, for suitable inclusions of the fibers, one checks that $f_\#$ coincides with the restriction of the map $PX \to PY$, $\gamma \mapsto f \circ \gamma$, to the fiber of $p_r$ over $x_0$.
	We put 
 $$A_X := \widetilde{H}_0((\Omega X)^{r-1}), \qquad A_Y:= \widetilde{H}_0((\Omega Y)^{r-1})$$
and denote  the map induced by $f_\#$ between the reduced homology groups by
 $$\varphi_* := \widetilde{H}_0(f_\#):A_X \to A_Y.$$
Let $f^*A_Y$ denote the $\pi_1(X^r)$-module which coincides with $A_Y$ as a free abelian group and whose $\pi_1(X^r)$-action is obtained from the $\pi_1(Y^r)$ action via $(f^r)_*$.

\begin{proposition}
Let $\theta_X \in H^1(X^r;A_X)$ and $\theta_Y\in H^1(Y^r;A_Y)$ be the homological obstructions of $p_r^X$ and $p_r^Y$, respectively.	Assume that $\pi_1(f):\pi_1(X,x_0) \to \pi_1(Y,y_0)$ is an isomorphism. Then
$$\varphi_*(\theta_X) = (f^r)^*(\theta_Y),$$
where $f^r:X^r \to Y^r$ is the $r$-fold Cartesian product of $f$ with itself.
\end{proposition}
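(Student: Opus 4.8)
The plan is to reduce the identity to the naturality of the connecting homomorphism in the long exact homotopy sequences of the two free path fibrations. First I would replace $f$ by a cellular approximation rel the basepoint; since the homological obstruction is a homotopy invariant and neither $\varphi_*$ nor $(f^r)^*$ is affected by a based homotopy of $f$, this is harmless and makes $f^r$ cellular. Postcomposition $\Phi\colon PX \to PY$, $\Phi(\gamma) = f \circ \gamma$, then satisfies $p_r^Y \circ \Phi = f^r \circ p_r^X$ and restricts on the fibre over the basepoint to $f_\#$, so $\Phi$ is a fibrewise map covering $f^r$ whose effect on fibres induces $\varphi_* = \widetilde{H}_0(f_\#)$ on coefficients.

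Next I would appeal to the crossed-homomorphism descriptions obtained in the homotopical viewpoint above, applied to both $X$ and $Y$: writing $\partial_X\colon \pi_1(X^r) \to \pi_0((\Omega X)^{r-1})$ and $\partial_Y\colon \pi_1(Y^r) \to \pi_0((\Omega Y)^{r-1})$ for the respective connecting homomorphisms, one has $\theta_X = [\hat{k}_X]$ and $\theta_Y = [\hat{k}_Y]$ with $\hat{k}_X(g) = \partial_X(g) - 1$ and $\hat{k}_Y(g) = \partial_Y(g) - 1$, where $1$ denotes the component of the constant loops. Since the cohomology class of a crossed homomorphism pulls back by precomposition and pushes forward by postcomposition with the coefficient map, $(f^r)^*\theta_Y$ is represented by $\hat{k}_Y \circ (f^r)_*$ and $\varphi_*\theta_X$ by $\varphi_* \circ \hat{k}_X$, both crossed homomorphisms $\pi_1(X^r) \to f^*A_Y$ for the pulled-back action. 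It therefore suffices to establish the equality of crossed homomorphisms
$$\varphi_* \circ \hat{k}_X = \hat{k}_Y \circ (f^r)_*.$$

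The core of the argument is then the commutativity of the square
$$\begin{CD}
\pi_1(X^r) @>{\partial_X}>> \pi_0((\Omega X)^{r-1}) \\
@V{(f^r)_*}VV @VV{(f_\#)_*}V \\
\pi_1(Y^r) @>{\partial_Y}>> \pi_0((\Omega Y)^{r-1}),
\end{CD}$$
which is exactly the naturality of the connecting homomorphism $\partial$ with respect to the map of fibrations $\Phi$. Recalling from above that $f_\#(\omega_{x_0}) = \omega_{y_0}$, so that $(f_\#)_*$ sends the base component $1$ to the base component $1$, this square gives $\varphi_*(\partial_X(g) - 1) = (f_\#)_*\partial_X(g) - 1 = \partial_Y((f^r)_*g) - 1 = \hat{k}_Y((f^r)_*g)$ for every $g \in \pi_1(X^r)$, which is the desired identity. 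Passing back to cohomology classes yields $\varphi_*(\theta_X) = (f^r)^*(\theta_Y)$.

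The main obstacle I anticipate is the bookkeeping with local coefficient systems. One must verify that $\varphi_*$ is genuinely a homomorphism of $\ZZ[\pi_1(X^r)]$-modules from $A_X$ to $f^*A_Y$; this rests on the monodromy compatibility $f_\# \circ K_\sigma \simeq K_{f^r \circ \sigma} \circ f_\#$ coming from the fact that $f$ commutes with concatenation and path reversal up to homotopy, exactly in the spirit of the proof of Proposition \ref{PropObstr}. The hypothesis that $\pi_1(f)$ is an isomorphism enters here to guarantee that $(f_\#)_*\colon \pi_0((\Omega X)^{r-1}) \to \pi_0((\Omega Y)^{r-1})$ is a bijection --- since on each loop-space factor it is $\pi_1(f)$ --- so that the crossed-homomorphism computation of the previous subsections transfers verbatim to $Y$ and $\varphi_*$ becomes an isomorphism of coefficient modules. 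A secondary point to treat carefully is that pullback and pushforward of crossed-homomorphism classes act as precomposition and postcomposition; this is standard, but I would align it with the sign and identification conventions fixed in Proposition \ref{PropObstr} to avoid introducing a spurious sign.
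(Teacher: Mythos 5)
Your proof is correct, and it takes a genuinely different route from the paper's. The paper argues at the cochain level: after a cellular approximation it fixes compatible lifts of $1$-cells to the universal covers, writes out the explicit obstruction cocycles $c_X$ and $c_Y$ exactly as in the proof of Proposition \ref{PropObstr}, and verifies $((f^r)^*c_Y)(\widetilde{e}) = (f_\#)_*(c_X(\widetilde{e}))$ cell by cell using the monodromy compatibility \eqref{EqMonodComm}. You instead use the paper's own homotopical description $\theta = [\hat{k}]$, $\hat{k}(g)=\partial(g)-1$ (stated intrinsically with values in $A_X$, resp.\ $A_Y$, which is the correct reading of that subsection), and reduce everything to the naturality of the connecting homomorphism for the fibrewise map $\Phi(\gamma)=f\circ\gamma$ covering $f^r$. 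The two points you flag are indeed the only ones needing care, and both are fine: the $\pi_1(X^r)$-equivariance of $\varphi_*\colon A_X \to f^*A_Y$ is \eqref{EqMonodComm} (equivalently, the standard naturality of the $\pi_1$-of-base action on $\pi_0$-of-fibre under maps of fibrations), and the pre-/post-composition behaviour of crossed-homomorphism classes in $H^1$ with local coefficients is standard and matches the conventions of \cite[Section VI.3]{Whitehead} already in force in the paper; since $H^1$ of a connected complex with local coefficients is naturally identified with $H^1$ of its fundamental group, no asphericity of $X^r$ or $Y^r$ is needed, though you should say this explicitly. Two small remarks: your cellular approximation step is superfluous in your approach, since $\Phi$ is a map of fibrations and the crossed-homomorphism formalism needs no cell structure; and the hypothesis that $\pi_1(f)$ is an isomorphism is not located where you place it --- the description $\theta_Y=[\hat{k}_Y]$ is intrinsic to $Y$ and needs no comparison with $X$ --- in fact your argument proves $\varphi_*(\theta_X)=(f^r)^*(\theta_Y)$ for an arbitrary based map $f$, the isomorphism hypothesis serving only to make $\varphi$ an isomorphism of coefficient modules, which is what Corollary \ref{CorCanonicalNatural} actually uses. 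In sum, the paper's computation buys a self-contained verification with the explicit cocycles it has already constructed, while yours is shorter, avoids all choices of lifts, and makes the naturality statement transparent.
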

\begin{proof}

Since $\pi_1(f)$ is an isomorphism, $\pi_0(f_\#): \pi_0(\Omega_r X) \to \pi_0(\Omega_r Y)$
is a bijection and thus $\varphi$ is an isomorphism of $\ZG$-modules.  Given $\alpha \in \Omega X^r$ and $\alpha' \in \Omega Y^r$, we let 
 $$K^X_\alpha: (\Omega X)^{r-1} \to (\Omega X)^{r-1}, \qquad K^Y_{\alpha'}: (\Omega Y)^{r-1} \to (\Omega Y)^{r-1},$$
 denote the monodromy maps of $p_r^X$ and $p_r^Y$, respectively. Using  \eqref{EqMonodrom}, we observe for each $\alpha = (\alpha_1,\dots,\alpha_r) \in \Omega X^r$ and all $\omega=(\omega_1,\dots,\omega_{r-1}) \in (\Omega X)^{r-1}$ that
\begin{align*}
&(f_\# \circ K^X_\alpha)(\omega) \simeq f_\#(\alpha_1\omega_1\bar\alpha_2, \alpha_2\omega_2 \bar\alpha_3,\ldots,\alpha_{r-1}\omega_{r-1}\bar\alpha_r) \\
	&= (f\circ (\alpha_1\omega_1\bar\alpha_2), f\circ (\alpha_2\omega_2 \bar\alpha_3),\ldots,f_\circ (\alpha_{r-1}\omega_{r-1}\bar\alpha_r)) \\
&= ((f\circ \alpha_1)(f\circ \omega_1)(\overline{f\circ \alpha_2}), (f\circ \alpha_2)(f\circ \omega_2)(\overline{f\circ \alpha_3}),\ldots,(f\circ \alpha_{r-1})(f\circ \omega_{r-1})(\overline{f\circ \alpha_r})) \\
&\simeq K^Y_{(f \circ \alpha_1,\dots,f\circ \alpha_{r})}(f\circ \omega_1,f \circ \omega_2,\dots,f \circ \omega_{r-1})= K^Y_{f^r\circ \alpha}(f_\#(\omega)) ,
\end{align*}
so that 
\begin{equation}
\label{EqMonodComm}
f_\# \circ K^X_\alpha \simeq K^Y_{f^r \circ \alpha} \circ f_\# \qquad \forall \alpha \in \Omega X^r.
\end{equation}
	By the cellular approximation theorem, we can assume w.l.o.g. that $f$ is a cellular map.  Let $Z(X^r)$ and $Z(Y^r)$ be the sets of those $1$-cells of $X^r$ and $Y^r$, respectively, whose homology classes are non-trivial. Since $\pi_1(f)$ is an isomorphism, $\pi_1(f^r):\pi_1(X^r) \to \pi_1(Y^r)$ is an isomorphism as well and  $f^r$ induces a bijection $Z(X^r) \to Z(Y^r)$ that we shall denote by $f$ as well.   
	
	Let $\widetilde{X}$ and $\widetilde{Y}$ be the universal covers of $X$ and $Y$, respectively, and let $\widetilde{f}:\widetilde{X} \to \widetilde{Y}$ be a lift of $f$. For each $e \in Z(X^r)$ we choose and fix a lift $\widetilde{e}$ to $\widetilde{X}^r$, i.e. a $1$-cell of $\widetilde{X}^r$ which projects down to $e$ under the universal covering map. Then for each $d \in Z_1(Y^r)$, there is a unique $e \in Z(X^r)$ with $f^r(e)=d$ and  the $1$-cell $\widetilde{f}^r(\widetilde{e})$ of $\widetilde{Y}^r$ lifts $d$. We equip each $d \in Z(Y^r)$ with the thus-obtained lift to $\widetilde{Y}^r$. As in the proof of Proposition \ref{PropObstr}, we define cocycles $c_X:C^{\text{cell}}_1(\widetilde{X}^r) \to A_X$ and $c_Y:C^{\text{cell}}_1(\widetilde{Y}^r) \to A_Y$ representing $\theta_X$ and $\theta_Y$ and defined with respect to the chosen lifts as in the proof of Proposition \ref{PropObstr}. 	Then  $(f^r)^*\theta_Y$ is represented by $(f^r)^*c_Y$, for which we compute that 
	\begin{align*}
		((f^r)^*c_Y)(\widetilde{e}) &= c_Y(\widetilde{f}^r(\widetilde{e})) \\
		&=[K^Y_{(f^r)\circ \gamma_e}(\omega_{y_0})]-[\omega_{y_0}] \\
		&= [K^Y_{(f^r)\circ \gamma_e}(f_\#(\omega_{x_0})]-[f_\#(\omega_{x_0})] \\ 
		&\stackrel{\eqref{EqMonodComm}}{=} [(f_\#\circ K^X_{\gamma_e})(\omega_{x_0})-[f_\#(\omega_{x_0})] \\
		&= (f_\#)_*\big([K^X_{\gamma_e}(\omega_{x_0})]-[\omega_{x_0}]\big) \\
		&= (f_\#)_* ( c_X(\widetilde{e})).
	\end{align*}
	Passing to cohomology shows the claim.
\end{proof}

We have seen that up to identifications of coefficient modules the homological obstruction classes considered above coincide with the respective $r$-th canonical classes. Thus, if we neglect some technical details, we immediately obtain the following statement.

\begin{corollary}
\label{CorCanonicalNatural}
Let $r \geq 2$ and let $\vv^X_r \in H^1(X^r;I_r)$ and $\vv^Y_r \in H^1(Y^r;I_r)$ be the $r$-th canonical classes of $X$ and $Y$, respectively. If $\pi_1(f):\pi_1(X)\to \pi_1(Y)$ is an isomorphism, then, up to a suitable isomorphism of coefficient modules, 
$$(f^r)^*(\vv^Y_r) = \vv^X_r.$$	
\end{corollary}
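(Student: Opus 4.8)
The plan is to combine the two preceding propositions. Writing $A_X = \widetilde{H}_0((\Omega X)^{r-1})$ and $A_Y = \widetilde{H}_0((\Omega Y)^{r-1})$ as in the subsection above, I would denote by $\varphi_X \colon A_X \to I_r$ and $\varphi_Y \colon A_Y \to I_r$ the two instances of the $\ZG$-module isomorphism appearing in Proposition \ref{PropObstr}, the first for $X$ (where $I_r$ is built from $\pi_1(X)$) and the second for $Y$ (where $I_r$ is built from $\pi_1(Y)$). By that proposition, $(\varphi_X)_*(\theta_X) = \vv^X_r$ and $(\varphi_Y)_*(\theta_Y) = \vv^Y_r$, while the naturality proposition proved just above gives $(\widetilde{H}_0(f_\#))_*(\theta_X) = (f^r)^*(\theta_Y)$ precisely because $\pi_1(f)$ is an isomorphism. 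That same assumption supplies an isomorphism of coefficient modules $\Theta \colon I_r \to f^*I_r$, obtained by applying $\pi_1(f)^{r-1}$ componentwise to $\pi^{r-1}$ and extending to the group ring and its augmentation ideal; this is the ``suitable isomorphism of coefficient modules'' referred to in the statement.

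Next I would run the computation at the level of $H^1(X^r;-)$. Starting from $\vv^Y_r = (\varphi_Y)_*(\theta_Y)$ and applying $(f^r)^*$, the naturality of the pushforward operation $(\cdot)_*$ with respect to the pullback $(f^r)^*$ lets me commute the two, so that with $f^*\varphi_Y\colon f^*A_Y \to f^*I_r$ the pullback of the coefficient map,
$$(f^r)^*(\vv^Y_r) = (f^r)^*\big((\varphi_Y)_*(\theta_Y)\big) = (f^*\varphi_Y)_*\big((f^r)^*(\theta_Y)\big) = \big((f^*\varphi_Y)\circ \widetilde{H}_0(f_\#)\big)_*(\theta_X),$$
where in the last step I substituted the naturality proposition. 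Comparing this with $\vv^X_r = (\varphi_X)_*(\theta_X)$, the corollary follows the moment I know that the square of coefficient modules
$$\begin{CD} A_X @>{\widetilde{H}_0(f_\#)}>> f^*A_Y \\ @V{\varphi_X}VV @VV{f^*\varphi_Y}V \\ I_r @>{\Theta}>> f^*I_r \end{CD}$$
commutes, for then $(f^r)^*(\vv^Y_r) = (\Theta\circ\varphi_X)_*(\theta_X) = \Theta_*(\vv^X_r)$, which is exactly the asserted equality up to $\Theta$.

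The main work, and the ``technical detail'' the statement sets aside, is therefore the commutativity of that coefficient square. I would trace the definitions back to the bijections $F_X \colon \pi_0((\Omega X)^{r-1}) \to \pi_1(X)^{r-1}$ and $F_Y \colon \pi_0((\Omega Y)^{r-1}) \to \pi_1(Y)^{r-1}$ that define $\varphi_X$ and $\varphi_Y$. The homomorphism $\widetilde{H}_0(f_\#)$ is induced by $\pi_0(f_\#)$, which sends a tuple of loop components $[\alpha_1,\dots,\alpha_{r-1}]$ to $[f\alpha_1,\dots,f\alpha_{r-1}]$; on fundamental groups this is exactly $\pi_1(f)^{r-1}$, so that $F_Y \circ \pi_0(f_\#) = \pi_1(f)^{r-1}\circ F_X$. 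Passing to free abelian groups and restricting to augmentation ideals turns this into $\varphi_Y \circ \widetilde{H}_0(f_\#) = \Theta \circ \varphi_X$, which is precisely the required commutativity. The only point to watch is the bookkeeping of the two distinct $G$-actions and the pullback of coefficient systems along $f^r$; since $\pi_1(f)$ is an isomorphism these actions are intertwined exactly by $\Theta$, so no essential obstacle arises once the bijections $F_X$ and $F_Y$ are matched up.
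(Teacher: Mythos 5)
Your proposal is correct and follows exactly the paper's route: the paper obtains this corollary as an immediate consequence of Proposition \ref{PropObstr} (identifying $\vv_r$ with the homological obstruction via $\varphi$) together with the preceding naturality proposition for the obstruction classes, ``neglecting some technical details.'' Your write-up supplies precisely those omitted details --- the explicit coefficient isomorphism $\Theta$ induced by $\pi_1(f)^{r-1}$, its equivariance with respect to the action \eqref{raction}, and the commutativity of the coefficient square via $F_Y \circ \pi_0(f_\#) = \pi_1(f)^{r-1}\circ F_X$ --- all of which check out.
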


This yields an interesting connection between canonical classes of arbitrary CW complexes with nontrivial fundamental groups and canonical classes of aspherical spaces.

\begin{corollary}
\label{CorCanonicalPull}
Let $X$ be a connected CW complex with $\pi_1(X)=\pi$, let $K$ be a space of type $K(\pi,1)$ and let $f_X:X \to K$ be a classifying map for the universal cover of $X$. Then 
$$\vv^X_r = (f_X^r)^*(\vv^K_{r}) \in H^1(X^r,I_r).$$
\end{corollary}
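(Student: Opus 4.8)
The plan is to deduce this corollary directly from the naturality statement in Corollary \ref{CorCanonicalNatural}, applied to the classifying map $f_X\colon X \to K$. The key observation is that $f_X$, being a classifying map for the universal cover of $X$, induces an isomorphism on fundamental groups: indeed, by the defining property of a $K(\pi,1)$ and the classification of maps into aspherical spaces recalled early in the article, the induced homomorphism $\pi_1(f_X)\colon \pi_1(X)\to \pi_1(K)=\pi$ is precisely the identification $\pi_1(X)\cong\pi$, hence an isomorphism. This is exactly the hypothesis needed to invoke Corollary \ref{CorCanonicalNatural}.

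Concretely, I would take $Y := K$ in Corollary \ref{CorCanonicalNatural} and $f := f_X$. Since $\pi_1(f_X)\colon \pi_1(X)\to \pi_1(K)=\pi$ is an isomorphism, that corollary yields, up to the suitable isomorphism of coefficient modules, the equality
$$(f_X^r)^*(\vv^K_r)=\vv^X_r \in H^1(X^r;I_r),$$
which is exactly the assertion. So the proof is essentially a one-line application once the fundamental-group hypothesis is verified.

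The only genuine subtlety to spell out is the coefficient-module bookkeeping. Corollary \ref{CorCanonicalNatural} produces the identity only after identifying $A_X=\widetilde H_0((\Omega X)^{r-1})$ with $A_K=\widetilde H_0((\Omega K)^{r-1})$ via the isomorphism induced by $(f_X)_\#$ on reduced homology, which in turn matches both with the algebraic module $I_r$ through the isomorphisms $\varphi$ constructed before Proposition \ref{PropObstr}. Because $\pi_1(f_X)$ is an isomorphism, $(f_X)_\#$ induces a bijection on path components of the loop spaces and hence an isomorphism of the corresponding $\ZZ[\pi^r]$-modules, so these identifications are all compatible and both canonical classes live in the same group $H^1(\,\cdot\,;I_r)$ after pullback.

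I do not expect any real obstacle here, as the statement is a formal consequence of the preceding naturality result; the main care is simply to note explicitly that a classifying map for the universal cover induces an isomorphism on $\pi_1$, and to invoke the coefficient identifications already set up in the previous subsection rather than rederiving them. Thus the proof reduces to citing Corollary \ref{CorCanonicalNatural} with $Y=K$ and $f=f_X$.
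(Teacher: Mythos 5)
Your proposal is correct and is essentially identical to the paper's own proof, which likewise deduces the statement as an immediate consequence of Corollary \ref{CorCanonicalNatural} applied with $Y=K$ and $f=f_X$, using that a classifying map for the universal cover induces an isomorphism on fundamental groups. Your additional remarks on the coefficient-module identifications only make explicit what the paper leaves implicit, so no gap remains.
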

\begin{proof}
 This is an immediate consequence of Corollary \ref{CorCanonicalNatural}, since by definition of $\kappa$, the map $\pi_1(\kappa):\pi_1(X) \to \pi$ is an isomorphism. 
\end{proof}

\subsection{Canonical classes of aspherical spaces}

Let $\pi$ be a group, fix $r \in \NN$ with $r\geq 2$ and put $G:= \pi^r$. We denote the group cohomology class that is induced by the crossed homomorphism $f_r$ of \eqref{Eqfr} by 
$$\vv_{r,\pi} \in H^1(G;I_r),$$
which can be seen as the $r$-th canonical class of a space of type $K(\pi,1)$. As a first step in connecting canonical classes to the aspects from the first part of this manuscript,  we want to relate this purely algebraically defined class to relative Berstein-Schwarz classes. 

We consider $G/\Delta_r$ as a left $G$-set in the obvious way and again consider $\pi^{r-1}$ as equipped with the $G$-action described in \eqref{raction}.
Let $\phi: \pi^r\to \pi^{r-1}$ be given by
\begin{eqnarray}\label{phi}
\phi(x_1, x_2, \dots, x_r)=(x_1x_2^{-1}, x_2x_3^{-1}, \dots, x_{r-1}x_r^{-1}). 
\end{eqnarray}
It is easy to see that $\phi$ is $G$-equivariant and descends to a $G$-equivariant bijection 
$$\bar\phi: G/\Delta_r \to \pi^{r-1},$$
which in turn induces a $\ZG$-module isomorphism 
$$\psi: \Z[G/\Delta_r] \to \Z[\pi^{r-1}].$$
Let $\sigma: \Z[G/\Delta_r] \to \ZZ$ be the augmentation and let $J:=\ker \sigma$. We recall that $J$ is the cofficient module from which the Berstein-Schwarz class of $G$ relative to $\Delta_r$ is obtained.

\begin{proposition}
\label{PropCanonBerstein}
Let $\omega \in H^1(G;J)$ be the Berstein-Schwarz class of $G$ relative to $\Delta_r$. Then 
$$\psi_*(\omega)=\vv_{r,\pi}. $$
\end{proposition}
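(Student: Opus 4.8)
The plan is to represent both $\omega$ and $\vv_{r,\pi}$ by explicit crossed homomorphisms and to check that the coefficient isomorphism $\psi$ carries one to the other. The only genuine subtlety is a bookkeeping one: $\omega$ is defined in Definition~\ref{DefBSrelative} through the projective resolution \eqref{EqProjRes}, whereas $\vv_{r,\pi}=[f_r]$ is given by a crossed homomorphism, i.e.\ in the language of the bar resolution. These two descriptions must be matched, and the hard part will be doing so cleanly. I would circumvent it by using the characterization of $\omega$ as a Bockstein image established above (the identity $\omega=\delta(1)$ for the Bockstein $\delta\colon H^0(G;\Z)\to H^1(G;J)$ of \eqref{EqShortExact} with $H=\Delta_r$), since a connecting homomorphism in a long exact cohomology sequence is independent of the resolution used to compute it and may therefore be evaluated in the crossed-homomorphism picture.

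Concretely, first I would unwind $\omega=\delta(1)$. One lifts the invariant generator $1\in H^0(G;\Z)=\Z^G$ to the basis element $\Delta_r\in\Z[G/\Delta_r]$ (which satisfies $\sigma(\Delta_r)=1$) and reads off that $\omega$ is represented by the crossed homomorphism
$$c\colon G\to J,\qquad c(x)=x\cdot\Delta_r-\Delta_r,$$
the derivation property $c(xy)=x\cdot c(y)+c(x)$ being immediate. Next I would observe that, because $\bar\phi\colon G/\Delta_r\to\pi^{r-1}$ is a bijection, $\psi$ intertwines the two augmentations, $\varepsilon\circ\psi=\sigma$, so $\psi$ restricts to the $\ZG$-isomorphism $J\xrightarrow{\cong} I_r$. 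By functoriality of group cohomology in the coefficients, the pushforward $\psi_*(\omega)\in H^1(G;I_r)$ is then represented by the crossed homomorphism $\psi\circ c$.

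Finally I would compute $\psi\circ c$ and recognize it as $f_r$. Using $\bar\phi(x\Delta_r)=\phi(x)$ and $\bar\phi(\Delta_r)=\phi(1)=(1,\dots,1)=:e$, together with the fact that the action \eqref{raction} satisfies $x\cdot e=(x_1x_2^{-1},\dots,x_{r-1}x_r^{-1})=\phi(x)$, one gets
$$(\psi\circ c)(x)=\psi\big(x\cdot\Delta_r-\Delta_r\big)=\bar\phi(x\Delta_r)-\bar\phi(\Delta_r)=\phi(x)-e=f_r(x),$$
the last equality being exactly the reading of \eqref{Eqfr} as the difference of the two basis elements $\phi(x)$ and $e$ of $\Z[\pi^{r-1}]$. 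Hence $\psi\circ c=f_r$, so $\psi_*(\omega)=[f_r]=\vv_{r,\pi}$, as claimed. In summary, the substance of the argument is the single display above; everything else is the passage from the resolution \eqref{EqProjRes} to crossed homomorphisms, which is precisely what the Bockstein description of $\omega$ makes routine.
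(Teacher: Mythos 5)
Your proposal is correct and follows essentially the same route as the paper: both represent $\omega$ by the crossed homomorphism $g\mapsto g\Delta_r-\Delta_r$ and verify by direct computation that composing with $\psi$ yields exactly $f_r$, reading \eqref{Eqfr} as the difference of basis elements $\phi(x)-(1,\dots,1)$. The only (harmless, in fact pleasantly self-contained) difference is that you justify this crossed-homomorphism representative via the paper's own Bockstein identity $\omega=\delta(1)$ and the resolution-independence of connecting homomorphisms, whereas the paper simply cites the construction of $\omega$ in \cite{BCE22}.
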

\begin{proof}
It is apparent from its construction in \cite{BCE22} that $\omega$ is induced by the crossed homomorphism 
$$w: G \to J, \qquad w(g) = g\Delta -\Delta.$$
Thus, $\psi_*(\omega)$ is induced by $\psi \circ w$ and we compute for all $g_1,\dots,g_r \in \pi$ that 
\begin{align*}
	(\psi \circ w)(g_1,\dots,g_r)&= \psi((g_1,\dots,g_r)\Delta)-\psi(\Delta) \\
	&= (g_1g_2^{-1},g_2g_3^{-1},\dots,g_{r-1}g_r^{-1})- (1,1,\dots,1) \\
	&= (g_1g_2^{-1}-1,g_2g_3^{-1}-1,\dots,g_{r-1}g_r^{-1}-1)= f_r(g_1,\dots,g_r).
\end{align*}
This shows that $\psi_*(\omega) = [\psi \circ w] = [f_r]=\vv_{r,\pi}$.
\end{proof}
Thus, up to an isomorphism of coefficient modules, $\omega$ and $\vv_{r,\pi}$ indeed coincide.  

\begin{corollary}
\label{CorSeqTCCanonical}
Let $r \in \NN$ with $r\geq 2$.
\begin{enumerate}[a)]
	\item Then
$$\TC_r(\pi) \geq \height(\vv_r) = \sup \{n \in \NN \ |\ \vv_r^n\neq 0\}.$$	
\item Assume that $\pi$ is geometrically finite and that $\pi$ is not free or $r \geq 3$. Then $\TC_r(\pi) = r \cdot \cd(\pi)$ if and only if $$\height(\vv_r)=r\cdot \cd(\pi).$$
\end{enumerate}
\end{corollary}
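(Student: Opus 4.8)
The plan is to deduce both parts from the identification $\TC_r(\pi)=\secat(\Delta_{\pi,r}\hookrightarrow \pi^r)$ in \eqref{EqTCrsecat}, combined with the algebraic lower bound of Proposition \ref{PropSecatOmega} and the comparison of the canonical class with the relative Berstein--Schwarz class in Proposition \ref{PropCanonBerstein}.

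For part a), let $\omega\in H^1(\pi^r;J)$ be the Berstein--Schwarz class of $G=\pi^r$ relative to $\Delta_r$. By Proposition \ref{PropCanonBerstein} the coefficient isomorphism $\psi$ satisfies $\psi_*(\omega)=\vv_r$, and since $\psi$ restricts to a $\ZG$-isomorphism $J\to I_r$, its tensor powers $\psi^{\otimes n}:J^n\to I_r^n$ are isomorphisms intertwining the cup product. Hence $(\psi^{\otimes n})_*(\omega^n)=\vv_r^n$, so $\omega^n\neq 0$ if and only if $\vv_r^n\neq 0$, and therefore $\height(\omega)=\height(\vv_r)$. Combining \eqref{EqTCrsecat} with Proposition \ref{PropSecatOmega} then gives $\TC_r(\pi)=\secat(\Delta_{\pi,r}\hookrightarrow\pi^r)\geq\height(\omega)=\height(\vv_r)$, which is the assertion of a).

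For part b) I would first record the matching upper bound. By Theorem \ref{TheoremSecatInclProperties}.a) and the product formula $\cd(\pi^r)=r\cdot\cd(\pi)$ from \cite{Dranish19}, one has $\TC_r(\pi)=\secat(\Delta_{\pi,r}\hookrightarrow\pi^r)\leq\cd(\pi^r)=r\cdot\cd(\pi)$. Together with part a) this produces the sandwich $\height(\vv_r)\leq\TC_r(\pi)\leq r\cdot\cd(\pi)$. The implication ``$\height(\vv_r)=r\cdot\cd(\pi)\Rightarrow\TC_r(\pi)=r\cdot\cd(\pi)$'' is then immediate, since part a) forces $\TC_r(\pi)\geq r\cdot\cd(\pi)$, which the upper bound upgrades to an equality.

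The substantial direction is the converse. Assuming $\TC_r(\pi)=r\cdot\cd(\pi)=:n$, one must show $\vv_r^n\neq 0$, equivalently $\omega^n\neq 0$; since $\height(\omega)\leq n$ holds automatically, the problem is to convert the maximality of the sectional category into a non-vanishing top power of the relative Berstein--Schwarz class. My plan is to exhibit a nonzero class in degree $n$ that is essential relative to $\Delta_r$, because by Theorem \ref{TheoremFM}.a) and Remark \ref{RemarkEssential} the existence of such a class forces $\omega^n\neq 0$. The natural candidate comes from the (absolute) Berstein--Schwarz class $\mathfrak{b}$ of $\pi^r$, whose top power $\mathfrak{b}^n$ is nonzero by universality (see \cite{DranishRudyak09}) as $n=\cd(\pi^r)$, pushed forward along the $\ZG$-module surjection $\rho:\Z[\pi^r]\to\Z[\pi^r/\Delta_r]$, which carries $\mathfrak{b}$ to $\omega$ and hence $\mathfrak{b}^n$ to $\omega^n=\rho^{\otimes n}_*(\mathfrak{b}^n)$. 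I expect this to be the main obstacle: a priori $\rho^{\otimes n}_*$ may annihilate $\mathfrak{b}^n$, and proving that it does not is exactly where the hypothesis ``$\pi$ is not free or $r\geq 3$'' must enter, as this is precisely the condition excluding the pathological case $(\pi\text{ free},\,r=2)$, in which $\TC_2(\pi)=2\cdot\cd(\pi)$ while $\vv_2^{2}=0$. Under this hypothesis I would complete the argument along the lines of Costa--Farber's treatment of the case $r=2$ in \cite{Costa}, adapting their identification of the top power of the canonical class as the precise obstruction to the maximality of $\TC_r$.
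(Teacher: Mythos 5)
Part a), the upper bound $\TC_r(\pi)\leq\cd(\pi^r)=r\cdot\cd(\pi)$, and the easy implication of b) are correct and coincide with the paper's argument: the paper likewise combines Proposition \ref{PropCanonBerstein} with Proposition \ref{PropSecatOmega} (via $(\psi^{\otimes n})_*(\omega^n)=\vv_r^n$) and Theorem \ref{TheoremSecatInclProperties}.a) together with Dranishnikov's product formula from \cite{Dranish19}. The problem is the converse implication of b), which you correctly identify as the substantial direction but do not actually prove. Your computation $\omega^n=\mu_*^{\otimes n}(\mathfrak{b}^n)$ (the pushforward of the absolute Berstein--Schwarz class along the augmentation-ideal map induced by $\rho$) is valid but makes no use of the hypothesis $\TC_r(\pi)=r\cdot\cd(\pi)$; the entire content of the converse is precisely the step ``maximal sectional category forces $\omega^n\neq 0$,'' and your proposal defers this to ``adapting Costa--Farber,'' which leaves the key step unestablished. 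The paper closes this gap in one line by citing \cite[Theorem 2.5]{BCE22}: for a geometrically finite group $G$ and a subgroup $H$, $\secat(H\hookrightarrow G)=\cd(G)$ forces the top power of the \emph{relative} Berstein--Schwarz class to be nonzero, and then transports the conclusion along the coefficient isomorphism $\psi$. That theorem is an obstruction-theoretic statement requiring a $\cd(G)$-dimensional model of $K(G,1)$, and this is exactly where the hypothesis excluding the case ($\pi$ free and $r=2$), i.e.\ excluding $\cd(\pi^r)=2$, enters.

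Your proposed explanation of that hypothesis is moreover factually wrong. You assert that for $\pi$ free and $r=2$ one has $\TC_2(\pi)=2\cdot\cd(\pi)$ while $\vv_2^2=0$, so that the equivalence genuinely fails there. But by Costa--Farber's own maximality theorem (\cite[Theorem 7]{Costa}, the $r=2$ case of Theorem \ref{TheoremMaximality} of this paper) applied to a $1$-dimensional $K(\pi,1)$, $\TC_2(X)=2$ \emph{implies} $\vv_2^2\neq 0$; for a wedge of at least two circles one indeed has $\TC_2=2=2\cdot\cd(\pi)$ and hence $\vv_2^2\neq 0$, while for $\pi\cong\Z$ one has $\TC_2(S^1)=1<2$, so the premise fails. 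Thus the excluded case is not a counterexample: the hypothesis in b) reflects the scope of the cited algebraic theorem (the Eilenberg--Ganea-type issue in cohomological dimension $2$), not a failure of the equivalence. To repair your proof you would either invoke \cite[Theorem 2.5]{BCE22} directly, as the paper does, or carry out in full the join/obstruction argument of Section 7 (Theorem \ref{TheoremMaximality}) for a $\cd(\pi^r)$-dimensional aspherical model --- neither of which your sketch currently does.
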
	
\begin{proof}
\begin{enumerate}[a)]
\item Let $k \in \NN$. We derive from Proposition \ref{PropCanonBerstein} and the compatibility of push-forwards with cup products that
$$\vv_r^k = \vv_{r,\pi}^k = (\psi_*(\omega))^k = (\psi^{\otimes k})_*(\omega^k).$$
Thus, the claim is a straightforward application of Proposition \ref{PropSecatOmega} to the case of $G=\pi^r$ and $H=\Delta_r$.
\item We derive from Theorem \ref{TheoremSecatInclProperties}.a) and the main result of \cite{Dranish19} that 
 $$\TC_r(\pi) \leq \cd(\pi^r)=r \cdot \cd(\pi).$$
 Thus, if $\height(\vv_r)=r \cdot \cd(\pi)$, it follows immediately from a) that $\TC_r(\pi)=r \cdot \cd(\pi)$. 
 
 Conversely, if $\TC_r(\pi)=\secat(\Delta_r \hookrightarrow \pi^r)=r \cdot \cd(\pi)$, it follows from \cite[Theorem 2.5]{BCE22} that $\omega^{r\cdot \cd(\pi)}\neq 0$, where $\omega$ denotes the Berstein-Schwarz class of $\pi^r$ relative to $\Delta_r$. Given that $$\psi^{\otimes (r\cdot \cd(\pi))}_*(\omega^{r \cdot \cd(\pi)})= (\psi_*(\omega))^{r \cdot \cd(\pi)}= \vv_r^{r\cdot  \cd(\pi)}$$ and since $\psi_*$ is an isomorphism of $\ZG$-modules, it follows that $\vv_r^{r\cdot  \cd (\pi)}\neq 0$ and thus $\height(\vv_r) \geq r\cdot  \cd (\pi)$, which becomes an equality, as for degree reasons $\height(\vv_r)\leq \cd(\pi^r)=r \cdot \cd(\pi)$.
\end{enumerate}
\end{proof}

\section{Topological and algebraic approaches to sequential TCs}

In this section, we want to combine our knowledge on canonical classes with the results from Section \ref{SectionAppl} for aspherical spaces to derive results on sequential topological complexities of spaces that are not necessarily aspherical. 

\subsection{Sequential $\D$-topological complexity}
As a first step, we recall some topological characterizations of $\tc_r(\pi)$ that have been carried out by the second and fourth authors in \cite{FOSequ}. We will briefly recall their constructions and results. Throughout this subsection, we consider a given group $\pi$ and its Cartesian powers $\pi^r$ and $\pi^{r-1}$ as discrete topological spaces. \medskip

Let $\D$ denote the family of subgroups of $\pi^r$ generated by the diagonal subgroup 
$$\Delta_r= \{(g,g,\dots,g) \in \pi^r \ | \ g \in \pi\},$$ that is, the smallest set of subgroups of
$G$ that contains $\Delta_r$ and is closed under conjugation and finite intersection.

\begin{definition}[{\cite[Definition 4.1]{FOSequ}}]\label{rdef2}
Let $X$ be a path-connected topological space with fundamental group $\pi$. {\it The $r$-th $\D$-topological complexity}, $\tc^\D_r(X)$, is defined as the minimal number
$k$ such that $X^r$ can be covered by $k+1$ open subsets
$$X^r = U_0 \cup U_1 \cup \dots U_k$$ with the property that for any $i \in \{0, 1,\dots,k\}$
and for any choice of the base point $u_i\in U_i$ the homomorphism $\pi_1(U_i, u_i)\to \pi_1(X^r, u_i)$ induced by
the inclusion $U_i\to X^r$ takes values in a subgroup of $\pi^r$ that is conjugate to $\Delta_r$. 
\end{definition}
It is worth noting that the definition of $\tc^\D_r(X)$ generalizes an earlier construction from \cite{FGLO17} which treats the case of $r=2$. The following statement and its proof occur as Lemma 4.2 and Corollary 4.3 in \cite{FOSequ}.
\begin{theorem}\label{rthm4}
Let $K$ be a connected finite aspherical cell complex of type $K(\pi,1)$  and let
$q: \widehat{K^r}\to K^r$ be the connected covering space
corresponding to $\Delta_r\subset \pi^r $. Then 
$$\tc^\D_r(K)=\tc_r(\pi)=\secat(q:\widehat{K^r}\to K^r).$$
\end{theorem}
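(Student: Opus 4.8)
The plan is to prove the two equalities separately, in each case translating the problem into the language of subgroup inclusions developed earlier in the paper.

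First I would establish $\tc_r(\pi) = \secat(q\colon \widehat{K^r}\to K^r)$. By definition $\tc_r(\pi) = \tc_r(K(\pi,1)) = \secat(p_r\colon PK\to K^r)$. Since the evaluation $\gamma\mapsto\gamma(0)$ exhibits $PK$ as a space of type $K(\pi,1)$, and since all evaluation maps $\mathrm{ev}_t\colon PK\to K$ are mutually homotopic, the fibration $p_r$ induces on fundamental groups the diagonal inclusion $\Delta_r\hookrightarrow\pi^r$. As $K^r$ is a $K(\pi^r,1)$, homotopy invariance of sectional category (Theorem \ref{TheoremSecatProperties}.b) then gives $\secat(p_r) = \secat(\Delta_r\hookrightarrow\pi^r)$. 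On the other hand, $\widehat{K^r}$ is a covering of the aspherical space $K^r$, hence itself aspherical with $\pi_1(\widehat{K^r}) = \Delta_r$, so it is a space of type $K(\Delta_r,1)$ and $q$ is precisely the covering map appearing in Remark \ref{RemarkSecatInclCovering}. That remark yields $\secat(q) = \secat(\Delta_r\hookrightarrow\pi^r)$, and combining the two identifications gives $\tc_r(\pi) = \secat(q)$.

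Next I would prove $\tc^\D_r(K) = \secat(q)$. The crux here is the lifting criterion for covering spaces applied to $q$: for an open set $U\subseteq K^r$ and each connected component of $U$ with a chosen basepoint, the inclusion of that component into $K^r$ lifts to $\widehat{K^r}$ if and only if the image of the induced map $\pi_1\to\pi_1(K^r)=\pi^r$ is contained in some conjugate of $\Delta_r$, the conjugate depending on the chosen sheet of the fiber. A continuous section of $q$ over $U$ is the same as a simultaneous lift over all components, so $q$ admits a section over $U$ exactly when, for every choice of basepoint in $U$, the induced $\pi_1$-image lands in a conjugate of $\Delta_r$ --- which is verbatim the defining condition in Definition \ref{rdef2}. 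Since $q$ is a fibration, homotopy sections may be rectified to genuine sections without altering the sectional category (cf.\ the remark following Definition \ref{DefSecat}), so the open covers computing $\secat(q)$ are exactly those computing $\tc^\D_r(K)$, and the two minima agree.

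I expect the main obstacle to be the careful bookkeeping in the second equality: matching the ``for any choice of basepoint'' clause of Definition \ref{rdef2} to the behaviour of the lifting criterion on possibly disconnected open sets, and tracking how the conjugacy class of $\Delta_r$ that appears depends on the choice of lift of the basepoint into the fiber of $q$. Once this dictionary between sections of $q$ and the $\pi_1$-image condition is set up correctly, both equalities follow formally from results already available in the text.
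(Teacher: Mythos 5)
Your proposal is correct, but it organizes the proof differently from the paper. The paper attributes the theorem to \cite{FOSequ} (Lemma 4.2 and Corollary 4.3) and only sketches the equality $\tc_r(\pi)=\secat(q)$, doing so by an explicit geometric comparison: it constructs the fibrewise map $\phi_K\colon PK\to\widehat{K^r}$ over $K^r$ (diagram \eqref{EqFTYdiag}), observes that both total spaces are of type $K(\pi,1)$ with fundamental groups mapping isomorphically onto $\Delta_r$, and invokes Dold's theorem to conclude that $\phi_K$ is a fibre homotopy equivalence, whence $\secat(p_r)=\secat(q)$. You instead prove that equality abstractly, by noting that $p_r$ and $q$ are both fibrations of aspherical spaces inducing $\Delta_r\hookrightarrow\pi^r$ on $\pi_1$, so each computes $\secat(\Delta_r\hookrightarrow\pi^r)$ (this is exactly Remark \ref{RemarkSecatInclCovering} plus the well-definedness of $\secat(\varphi)$); and you prove $\tc^{\D}_r(K)=\secat(q)$ directly via the covering-space lifting criterion applied componentwise to the open sets, rather than via the equality $\tc^{\D}_r(K)=\tc_r(\pi)$ from \cite{FOSequ}. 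Your second argument is in substance the proof of the more general Theorem \ref{ThmTCDgeneral}.b): it nowhere uses asphericity of $K$, which is a genuine gain in generality, and your basepoint/conjugacy bookkeeping (the image subgroup $q_*\pi_1(\widehat{K^r},\hat v)$ sweeps out the full conjugacy class of $\Delta_r$ as $\hat v$ ranges over the fibre, and the containment condition is conjugation-invariant, hence independent of the identification $\pi_1(K^r,u_i)\cong\pi^r$) is exactly the right dictionary. What the paper's route buys that yours does not is the explicit comparison map $\phi_K$ itself, which is reused later (e.g.\ in the proof of Theorem \ref{thm:TCDequalTC}, where $\phi_X$ is shown to be an $(rn-k)$-equivalence for non-aspherical $X$). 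Two minor points of rigor: Theorem \ref{TheoremSecatProperties}.b) as stated compares homotopic maps between the \emph{same} spaces, so your first equality tacitly also uses invariance of $\secat$ under composition with homotopy equivalences --- the same standard fact underlying the paper's assertion that $\secat(\varphi)$ is well-defined, so this is acceptable; and the correspondence between unbased homotopy classes and homomorphisms is only up to conjugation, which is harmless here since conjugate inclusions have equal sectional category.
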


One way to prove the second equality of Theorem \ref{rthm4} is the following: 

Let $X$ be a cell complex with $\pi_1(X)=\pi$ and let $\widetilde{X}$ be its universal cover. We can realize $\widehat{X^r}$, the covering space of $X^r$ that is associated with $\Delta_r\subset \pi^r$ as 
$$\widehat{X^r} = \widetilde{X}^r/\Delta_r,$$
i.e. as the orbit space of the $\Delta_r$-action on $\widetilde{X}^r$ obtained by restricting the $\pi^r$-action that is given as the $r$-fold product of the  $\pi$-action on $\widetilde{X}$ by deck transformations. Let $\rho: \widetilde{X}^r \to \widehat{X^r}$ denote the corresponding orbit space projection. 
Then there is a well-defined continuous map 
$$\phi_X: PX \to \widehat{X^r}, \quad \phi(\gamma) = \rho(\tilde{\gamma}(0),\tilde{\gamma}(\tfrac{1}{r-1}),\dots,\tilde\gamma(\tfrac{r-2}{r-1}),\tilde\gamma(1)),$$
where $\tilde\gamma$ denotes a lift of $\gamma$ to $\widetilde{X}$. (This map was first studied in the case of $r=2$ in \cite[Theorem 4.1]{FTY}.)
Then the following diagram commutes:
\begin{equation}
\label{EqFTYdiag}	
\begin{tikzcd}
	PX \ar[dr, swap, "p"] \ar[rr, "\phi_X"] & & \widehat{X^r} \ar[dl, "q"] \\
	& X^r &
\end{tikzcd}
\end{equation}
If $X=K$ is aspherical, then $\phi_K$ is a fibre homotopy equivalence since both $PK$ and $\widehat{K^r}$ are of type $K(\pi,1)$ and both of their  fundamental groups map isomorphically onto
$\Delta_r \subset \pi^r$.  Since $\phi_K$ commutes with the two fibrations, it follows from Dold's theorem, see \cite[Section 7.5]{MayConcise} that $\phi_K$ is a fibre homotopy equivalence, from which we derive the assertion of Theorem \ref{rthm4}. \medskip 

We derive an interesting observation:

\begin{proposition}
\label{PropCanonAspher}
Let $r \in \NN$ with $r \geq 2$, let $\pi$ be a geometrically finite group and let $K$ be a finite cell complex of type $K(\pi,1)$. Let $q:\widehat{K^r} \to K^r$ be a covering that is associated with the diagonal subgroup $\Delta_r \subset \pi^r$. 
\begin{enumerate}[a)]
	\item Up to identifications of coefficient modules, the canonical class $\vv_r \in H^1(K;I_r)$ is the homological obstruction to the existence of a continuous section of $q$. 
	\item It holds that 
$$	\TC^{\D}_r(K) \geq \height(\vv_r).$$
\end{enumerate}
\end{proposition}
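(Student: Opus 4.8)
The plan is to prove part a) by transporting the homological obstruction of the path fibration, which is already identified with $\vv_r$ in Proposition \ref{PropObstr}, across the fibre homotopy equivalence relating $p_r$ and $q$. Recall from the discussion following Theorem \ref{rthm4} that $\phi_K\colon PK \to \widehat{K^r}$ is a fibre homotopy equivalence over $K^r$, fitting into the commutative triangle \eqref{EqFTYdiag}. First I would invoke the naturality of Schwarz's homological obstruction under fibre homotopy equivalences over a fixed base: such an equivalence restricts to a homotopy equivalence of fibres, inducing an isomorphism of the coefficient systems $\widetilde{H}_0(F)$ that carries the obstruction of the source fibration to that of the target. Applying this to $\phi_K$, the homological obstruction $\theta_q \in H^1(K^r;\widetilde{H}_0(q^{-1}(\ast)))$ of the covering $q$ is the image of the homological obstruction $\theta$ of $p_r$ under the isomorphism induced by the restriction of $\phi_K$ to a fibre.

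It then remains to check that this induced isomorphism on coefficients is the one underlying $\varphi$. The fibre of $q$ is the discrete set $\pi^r/\Delta_r$, whose reduced zeroth homology is the augmentation ideal $I_r$ via the $G$-equivariant bijection $\bar\phi\colon G/\Delta_r \to \pi^{r-1}$ from the construction preceding Proposition \ref{PropCanonBerstein}; the fibre of $p_r$ is $(\Omega K)^{r-1}$ with $\widetilde{H}_0((\Omega K)^{r-1}) \cong I_r$ via $\varphi$. The restriction of $\phi_K$ to fibres realises on $\pi_0$ precisely the identification $\pi_0((\Omega K)^{r-1}) \cong \pi^{r-1} \cong \pi^r/\Delta_r$, so that the two coefficient identifications agree. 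Combining this with Proposition \ref{PropObstr}, which gives $\varphi_*(\theta)=\vv_r$, yields $\theta_q = \vv_r$ up to the stated identification of coefficients, proving a).

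For part b) the quickest route is to combine Theorem \ref{rthm4}, which gives $\TC^{\D}_r(K)=\tc_r(\pi)$, with Corollary \ref{CorSeqTCCanonical} a), which gives $\tc_r(\pi) \geq \height(\vv_r)$ once one notes that for aspherical $K$ the canonical class of $K$ coincides with the one appearing there; these together yield the claim at once. Alternatively, and more in keeping with part a), one can argue topologically: since $\vv_r$ is the homological obstruction of $q$, it lies in $\ker[q^*\colon \widetilde{H}^1(K^r;I_r) \to \widetilde{H}^1(\widehat{K^r};q^*I_r)]$, because the pullback $q^*\widehat{K^r} \to \widehat{K^r}$ admits the diagonal section and hence has vanishing obstruction. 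Taking $n=\height(\vv_r)$ and setting $u_1=\dots=u_n=\vv_r$, the nonvanishing of $\vv_r^n$ together with Theorem \ref{TheoremSecatProperties} c) gives $\secat(q) \geq n$, whence $\TC^{\D}_r(K)=\secat(q) \geq \height(\vv_r)$.

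The routine calculations here are minor; the hard part will be the first paragraph, namely formulating and justifying the naturality of Schwarz's homological obstruction under a fibre homotopy equivalence over the base and pinning down that the induced isomorphism of $\widetilde{H}_0$-coefficient systems is compatible with $\varphi$ and $\bar\phi$. Once this bookkeeping of coefficient modules is in place, both assertions follow from results already established.
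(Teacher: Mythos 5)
Your proposal is correct and follows essentially the same route as the paper: part a) is exactly the paper's intended argument (transporting the homological obstruction of $p_r$, identified with $\vv_r$ in Proposition \ref{PropObstr}, across the fibre homotopy equivalence $\phi_K$ from the discussion preceding the proposition, with the coefficient identifications matched via $\varphi$ and $\bar\phi$), and your primary argument for b) — combining Theorem \ref{rthm4} with the height bound of Corollary \ref{CorSeqTCCanonical}.a) — is precisely the paper's ``same lines as Corollary \ref{CorSeqTCCanonical}.a)'' step. Your alternative cup-product argument for b), using that the obstruction class $\vv_r$ lies in $\ker q^*$ together with Theorem \ref{TheoremSecatProperties}.c), is a sound minor variant but not a substantively different approach.
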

\begin{proof}
\begin{enumerate}[a)]
	\item This follows from Proposition \ref{PropObstr} and the observations preceding this proposition.
	\item By construction, $\widehat{X^r}$ is aspherical with $\pi_1(\widehat{X^r}) = \Delta_r$. Thus, by Theorem \ref{rthm4},
	$$\TC^{\D}_r(K) = \secat(q:\widehat{K^r} \to K^r) = \secat(\Delta_r \hookrightarrow \pi^r).$$
 Using a), the claim is then shown along the same lines as Corollary \ref{CorSeqTCCanonical}.a).
\end{enumerate}
\end{proof}


\subsection{Beyond the aspherical case} By Theorem \ref{rthm4}, for a finite aspherical cell complex $K$, it holds that $\TC_r(K) = \TC_r^{\D}(K)$. This equality does not need to hold for arbitrary finite CW complexes. Counterexamples are provided by simply connected CW complexes for whom it follows straight from the definition that $\TC^{\D}_r(X)=0$ for all $r \geq 2$ which is certainly not true for $\TC_r(X)$ unless $X$ is contractible. However, there are more general results on relations between $\TC_r$ and $\TC^{\D}_r$.

The following theorem is proven by straightforward generalizations of the corresponding results for $r=2$, which occur as Propositions 2.2 and 2.4 in \cite{FGLOupper}.
\begin{theorem}
\label{ThmTCDgeneral}
Let $X$ be a connected locally finite cell complex with $\pi_1(X)=\pi$ and let $r \in \NN$ with $r \geq 2$. 
\begin{enumerate}[a)]
	\item Then $\TC_r(X)  \geq \TC^{\D}_r(X)$.
	\item Let $q^X:\widehat{X}^r \to X^r$ be the covering of $X^r$ that is associated with $\Delta_r \subset \pi^r$. Then 
	$$ \TC^{\D}_r(X) = \secat(q^X:\widehat{X}^r \to X^r).$$
\end{enumerate}
\end{theorem}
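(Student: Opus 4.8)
The plan is to establish part b) first and then obtain part a) as a formal consequence via the factorization recorded in diagram \eqref{EqFTYdiag}. Throughout, the essential input is the lifting criterion for covering spaces, which is available because a connected locally finite cell complex $X$ has $X^r$ locally path-connected.

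For the inequality $\secat(q^X) \le \TC^\D_r(X)$, I would begin with an open cover $U_0,\dots,U_k$ of $X^r$ realizing $\TC^\D_r(X)=k$ in the sense of Definition \ref{rdef2}. Fix $i$ and let $V$ be a path component of $U_i$; by local path-connectedness $V$ is open, and by hypothesis the image of $\pi_1(V)\to\pi_1(X^r)$ lies in a conjugate of $\Delta_r$. Since $q^X$ is precisely the covering associated with $\Delta_r$, the covering lifting criterion yields a continuous section of $q^X$ over $V$; as the path components of $U_i$ are pairwise disjoint open sets, these componentwise sections assemble into a continuous section of $q^X$ over $U_i$. Running over all $i$ gives $\secat(q^X)\le k$.

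For the reverse inequality $\TC^\D_r(X)\le\secat(q^X)$, I would take an open cover $V_0,\dots,V_m$ over each member of which $q^X$ admits a section, using that $q^X$, being a covering and hence a fibration, allows homotopy sections to be straightened to genuine ones. A section of $q^X$ over $V_j$ lifts every loop in $V_j$ to a loop in $\widehat{X}^r$, so for any basepoint $v_j\in V_j$ the image of $\pi_1(V_j,v_j)\to\pi_1(X^r,v_j)$ is contained in the corresponding conjugate of $q^X_*(\pi_1(\widehat{X}^r))=\Delta_r$. Hence the cover $V_0,\dots,V_m$ satisfies the condition of Definition \ref{rdef2}, giving $\TC^\D_r(X)\le m$; together with the previous step this proves b).

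Part a) then follows with no further geometry: diagram \eqref{EqFTYdiag} commutes for arbitrary $X$, so $q^X\circ\phi_X=p_r$, and precomposing any homotopy section $s$ of $p_r$ over an open set $U$ with $\phi_X$ produces a homotopy section $\phi_X\circ s$ of $q^X$ over $U$. Thus any open cover realizing $\TC_r(X)=\secat(p_r)$ simultaneously witnesses $\secat(q^X)$, whence $\secat(q^X)\le\TC_r(X)$, and part b) upgrades this to $\TC^\D_r(X)\le\TC_r(X)$. The step I expect to be most delicate is the basepoint and path-component bookkeeping in b): one must align the lifting criterion's ``image in a conjugate of $\Delta_r$'' with the per-basepoint formulation of Definition \ref{rdef2} and confirm that the componentwise lifts glue continuously, which is exactly where local path-connectedness of $X^r$ (and hence local finiteness of $X$) enters.
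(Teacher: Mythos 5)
Your proof is correct and takes essentially the approach the paper intends: the paper itself proves this theorem only by citing the $r=2$ statements (Propositions 2.2 and 2.4 of \cite{FGLOupper}) and asserting that they generalize straightforwardly, and your argument — the covering-space lifting criterion applied over the open path components of the $U_i$ (using local path-connectedness of $X^r$) for one inequality in b), straightening homotopy sections of the covering $q^X$ for the other, and then the factorization $p_r = q^X\circ\phi_X$ from diagram \eqref{EqFTYdiag} for a) — is exactly that generalization written out in full. The only cosmetic deviation is that you deduce a) from b) together with \eqref{EqFTYdiag} rather than verifying the $\D$-condition directly from a motion-planning section as in the cited $r=2$ proof; both routes are valid, and the paper's own diagram supports yours.
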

We can use this observation to establish a lower bound for sequential topological complexity from our considerations of the aspherical case. The following assertion was shown in \cite[Lemma 2.9]{FGLOupper} for the case of $r=2$, but again the proof generalizes straightforwardly to the sequential setting. 

\begin{lemma}
\label{LemmaTCDconn}
Let $r \in \NN$ with $r \geq 2$, let $X$ be a connected cell complex and put  $\pi :=\pi_1(X)$. Let $k \in \NN$, such that the universal cover of $X$ is $(k-1)$-connected. If $\cd(\pi) \leq k$, then $\TC_r^{\D}(X) = \TC_r(\pi)$.
\end{lemma}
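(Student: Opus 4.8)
The plan is to translate both sides of the claimed equality into sectional categories of covering maps and then to compare these by means of the classifying map, which under the stated hypotheses will turn out to admit a homotopy section. First I would apply Theorem \ref{ThmTCDgeneral}.b) to write $\TC^{\D}_r(X) = \secat(q^X\colon \widehat{X}^r \to X^r)$, where $q^X$ is the covering of $X^r$ associated with $\Delta_r \subset \pi^r$, and apply Theorem \ref{rthm4} together with \eqref{EqTCrsecat} to write $\TC_r(\pi) = \secat(q^K\colon \widehat{K^r}\to K^r)$, where $K$ is of type $K(\pi,1)$ and $q^K$ is the covering of $K^r$ associated with $\Delta_r$. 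Let $f\colon X \to K$ be a classifying map for the universal cover of $X$, so $\pi_1(f)$ is an isomorphism. Then $\pi_1(f^r)=\pi_1(f)^{\times r}$ carries $\Delta_r$ isomorphically onto $\Delta_r$, whence $q^X$ is exactly the pullback $(f^r)^*q^K$. Since sectional category does not increase under pullback, this gives the easy inequality $\TC^{\D}_r(X)=\secat(q^X)\le \secat(q^K)=\TC_r(\pi)$ for free.

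The heart of the matter will be the reverse inequality, which I would obtain by producing a homotopy section $\sigma\colon K \to X$ of $f$, i.e.\ a map with $f\circ\sigma\simeq \id_K$. I would replace $f$ by a fibration and examine its homotopy fibre $F$: since $\pi_1(f)$ is an isomorphism and $\pi_i(K)=0$ for $i\ge 2$, the long exact homotopy sequence of $f$ yields $\pi_1(F)=0$ and $\pi_i(F)\cong \pi_i(X)\cong \pi_i(\widetilde{X})$ for $i\ge 2$, so the $(k-1)$-connectivity of $\widetilde{X}$ forces $F$ to be $(k-1)$-connected. The successive obstructions to lifting $\id_K$ through $f$ then live in the groups $H^{i+1}(K;\pi_i(F))\cong H^{i+1}(\pi;\pi_i(F))$ with local coefficients; for $i\le k-1$ these vanish because $\pi_i(F)=0$, and for $i\ge k$ they vanish because $i+1>k\ge \cd(\pi)$. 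Hence all obstructions vanish and $\sigma$ exists. This is the step in which both hypotheses enter essentially, the connectivity of $\widetilde{X}$ controlling the coefficients in low degrees and the bound $\cd(\pi)\le k$ killing the group cohomology in high degrees; I expect it to be the main obstacle, the only real subtlety being the standard bookkeeping needed to run the sequence of obstruction classes.

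Finally, with $\sigma$ in hand I would pass to $r$-fold products, using $f^r\circ\sigma^r=(f\circ\sigma)^r\simeq \id_{K^r}$. By Dold's theorem on fibrations pulled back along homotopic maps, $$(\sigma^r)^*q^X=(\sigma^r)^*(f^r)^*q^K=(f^r\circ\sigma^r)^*q^K\simeq q^K$$ as fibrations over $K^r$, where $\simeq$ denotes fibre homotopy equivalence. Since sectional category is a fibre homotopy invariant and does not increase under pullback, this yields $\TC_r(\pi)=\secat(q^K)=\secat\big((\sigma^r)^*q^X\big)\le \secat(q^X)=\TC^{\D}_r(X)$. Combining this with the inequality of the first paragraph gives $\TC^{\D}_r(X)=\TC_r(\pi)$, as desired.
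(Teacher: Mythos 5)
Your proof is correct. Note that for this lemma the paper offers no written-out argument at all: it states that the assertion ``was shown in \cite{FGLOupper} for the case of $r=2$, but again the proof generalizes straightforwardly to the sequential setting'', so the comparison here is with the cited $r=2$ argument rather than with an explicit proof in the text. Your proposal supplies precisely the expected generalization, and all the steps check out: the identification $q^X=(f^r)^*q^K$ is legitimate because $\pi_1(f^r)$ is an isomorphism carrying $\Delta_r$ onto $\Delta_r$ (so the pullback covering is connected and corresponds to the right subgroup — this is exactly the pullback square the paper displays before \eqref{EqPullbackDiag}), and monotonicity of $\secat$ under pullback gives the easy inequality $\TC_r^{\D}(X)\le \TC_r(\pi)$. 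The heart of your argument, the homotopy section $\sigma$ of the classifying map, is also sound: the homotopy fibre $F$ satisfies $\pi_1(F)=0$ and $\pi_i(F)\cong\pi_i(\widetilde X)$ for $i\ge 2$, so it is $(k-1)$-connected and in particular simply connected, which makes the local-coefficient obstruction theory applicable without any simplicity hypotheses; the obstructions in $H^{i+1}(K;\pi_i(F))=H^{i+1}(\pi;\pi_i(F))$ then die for $i\le k-1$ by connectivity and for $i\ge k$ because $i+1>k\ge\cd(\pi)$. A genuine merit of your formulation is that it uses $\cd(\pi)\le k$ only through the vanishing of group cohomology above $\cd(\pi)$; the alternative route of realizing $K$ by a complex of dimension $\le k$ and mapping its skeleta into $X$ would require the geometric dimension of $\pi$ to equal $\cd(\pi)$, which is exactly the unresolved Eilenberg--Ganea issue when $k=\cd(\pi)=2$, so your version is uniformly valid. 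Two cosmetic remarks: the final appeal to fibre homotopy equivalence (pullbacks along homotopic maps) can be bypassed, since a local section of $q^X$ over $U$ composed with the covering map $\widehat{X^r}\to\widehat{K^r}$ gives a homotopy section of $q^K$ over $(\sigma^r)^{-1}(U)$ directly, using $f^r\circ\sigma^r\simeq\id_{K^r}$; and Theorem \ref{ThmTCDgeneral}.b) is stated in the paper for locally finite complexes, a hypothesis bookkeeping issue inherited from the paper's own statement of the lemma rather than a gap in your reasoning.
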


\begin{corollary}
\label{CorSeqTClowerNonAsph}
Let $\pi$ be a geometrically finite group and $X$ be a connected locally finite cell complex with $\pi_1(X)=\pi$. Let $k \in \NN$, such that the universal cover of $X$ is $(k-1)$-connected. If $\cd(\pi) \leq k$, then
$$\TC_r(X) \geq r \cdot \cd(\pi) - k(\pi), $$
where $k(\pi) = \max \{\cd(C(g)) \ | \ g \in \pi\setminus\{1\}\}$.
\end{corollary}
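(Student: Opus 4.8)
The plan is to combine three results established earlier in the article into a single chain of (in)equalities. Concretely, I would show that
\[
\TC_r(X) \geq \TC^{\D}_r(X) = \TC_r(\pi) \geq r \cdot \cd(\pi) - k(\pi),
\]
so that the entire argument reduces to checking that the hypotheses of each individual step are met by the given data. The first inequality comes from Theorem \ref{ThmTCDgeneral}.a), which applies precisely because $X$ is a connected locally finite cell complex; this yields $\TC_r(X) \geq \TC^{\D}_r(X)$ for every $r \geq 2$.

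Next I would appeal to Lemma \ref{LemmaTCDconn} to replace $\TC^{\D}_r(X)$ by $\TC_r(\pi)$. Its hypotheses are exactly the two standing assumptions of the corollary, namely that the universal cover of $X$ is $(k-1)$-connected and that $\cd(\pi) \leq k$, so it delivers the middle equality $\TC^{\D}_r(X) = \TC_r(\pi)$ without any further work. Finally, since $\pi$ is geometrically finite by assumption, Theorem \ref{TheoremLowerSeqTC} supplies the group-theoretic lower bound $\TC_r(\pi) = \TC_r(K(\pi,1)) \geq r \cdot \cd(\pi) - k(\pi)$, with $k(\pi)$ being precisely the quantity in terms of centralizers that appears in the statement. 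Concatenating the three steps then gives the asserted inequality.

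Because each of the three steps is a direct citation, the proof is essentially an assembly and presents no genuine computational obstacle. The only point requiring care is bookkeeping: one must confirm that the connectivity and cohomological-dimension hypotheses feeding into Lemma \ref{LemmaTCDconn} coincide verbatim with those of the corollary, and that the local finiteness needed for Theorem \ref{ThmTCDgeneral}.a) is present among the assumptions (it is). The substantive content---the equality between the $\D$-topological complexity and $\TC_r(\pi)$ in the prescribed connectivity range, and the centralizer-based lower bound for the aspherical case---has already been carried out in the cited statements, so the corollary follows at once.
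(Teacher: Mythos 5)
Your proposal is correct and follows exactly the paper's own proof: Theorem \ref{ThmTCDgeneral}.a) combined with Lemma \ref{LemmaTCDconn} gives $\TC_r(X) \geq \TC_r(\pi)$, and Theorem \ref{TheoremLowerSeqTC} then supplies the lower bound. The hypothesis-checking you describe is all that is required, and the paper does the same.
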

\begin{proof}
	Combining Theorem \ref{ThmTCDgeneral}.a) with Lemma \ref{LemmaTCDconn} shows that $\TC_r(X) \geq \TC_r(\pi)$ in this setting. The claim it then an immediate consequence of Theorem \ref{TheoremLowerSeqTC}.
\end{proof}

We want to show next also for a finite cell complex $X$ that is not aspherical, the height of its $r$-th canonical class provides a lower bound for $\TC_r(X)$.  Put $G:= \pi^r$ and consider the universal covering $\widetilde{X}^r\to X^r$ as a principal $G$-fibration. It is shown in \cite{FOSequ} that the associated fibration
$$p^X:\widetilde{X^r}\times_G \pi^{r-1}\to X^r, $$
defined with respect to the $G$-action on $\pi^{r-1}$ from \eqref{raction} and viewing $\pi^{r-1}$ as a discrete group, coincides with the covering $q^X: \widehat{X}^r\to X^r$ from Theorem \ref{ThmTCDgeneral}.b). Thus, by Theorem \ref{ThmTCDgeneral}.b),
 $$\tc^\D_r(X)= \secat\big(f_r:\widetilde{X}^r\times_G \pi^{r-1}\to X^r\big).$$
It is a result of A. Schwarz, see \cite[Theorem 3]{Schwarz66}, that the sectional category of a fibration $p: E\to B$ equals the smallest integer $k$ such that the fiberwise join $p\ast p\ast \dots \ast p$ of $k+1$ copies of $p: E\to B$ admits a continuous section.
One checks that the join of $k+1$ copies of the fibration $p^X$ is given by 
$$q^X_k \colon \widetilde X^r \times_G E_k(\pi^{r-1}) \to X^r,$$
where $ E_k(\pi^{r-1})= \pi^{r-1} * \pi^{r-1} * \dots * \pi^{r-1}$ denotes the $(k+1)$-fold join of $\pi^{r-1}$ and where the left $G$-action on $E_k(\pi^{r-1})$ is induced by the one on $\pi^{r-1}$.  Thus, we obtain that 
\begin{equation}
\label{EqTCDinf}
\TC^{\D}_r(X) = \inf \{k \in \NN \ | \ q_k \colon \widetilde X^r \times_G E_k(\pi^{r-1}) \to X^r \  \text{admits a continuous section}\}.
\end{equation}

 Let $K$ be a space of type $K(\pi,1)$ and let $f_X\colon X \to K$ be the classifying map of the universal cover of $X$. Then $f_X^r:X^r \to K^r$ is the classifying map of the universal cover of $X^r$ for any $r \geq 2$. 

Consider the coverings $q^X:\widehat{X^r}\to X^r$ and $q:\widehat{K^r} \to K^r$ corresponding to the subgroup $\Delta_r \subset \pi^r$. Then the following is a pullback diagram:
$$\begin{tikzcd}
	\widetilde{X}^r\times_G \pi^{r-1} \ar[r] \ar[d, swap, "p^X"] & \widetilde{K}^r\times_G \pi^{r-1} \ar[d, "p^K"] \\
	X^r \ar[r, "f_X^r"] & K^r.
\end{tikzcd}$$
Thus, all of the diagrams associated to fibrewise joins of the two fibrations with themselves are pullbacks as well. That is, the following is a pullback diagram for any $k \in \NN$:
\begin{equation}
\label{EqPullbackDiag}
\begin{tikzcd}
	{\widetilde X}^r\times_G E_k(G/\Delta) \ar[r] \ar[d, swap, "q_k^X"] & {\widetilde K}^r \times_G E_k(G/\Delta) \ar[d, "q^K_k"] \\
	X^r \ar[r, "f_X^r"] & K^r.
\end{tikzcd}
\end{equation}
 
Combining this with the previous observations shows us how we can generalize properties of canonical classes to cell complexes that are not necessarily aspherical. 

\begin{theorem}
\label{TheoremTCDlower}
	Let $r \in \NN$ with $r \geq 2$ and let $X$ be a connected finite cell complex. Let $\vv_r \in H^1(X;I_r)$ be the $r$-th canonical class of $X$. Then 
	$$\TC^{\D}_r(X) \geq \height(\vv_r).$$
\end{theorem}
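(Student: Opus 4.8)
The plan is to identify $\TC^{\D}_r(X)$ with a sectional category and then apply the cohomological lower bound of Theorem \ref{TheoremSecatProperties}.c) to the canonical class. By Theorem \ref{ThmTCDgeneral}.b) we have $\TC^{\D}_r(X) = \secat(q^X\colon \widehat{X}^r \to X^r)$, so it suffices to prove $\secat(q^X) \geq \height(\vv_r)$. First I would invoke Theorem \ref{TheoremSecatProperties}.c) for the map $q^X$, taking all coefficient systems equal to $I_r$ and all classes equal to $\vv_r$: if $\vv_r$ lies in $\ker[(q^X)^*\colon \widetilde{H}^1(X^r;I_r) \to \widetilde{H}^1(\widehat{X}^r;(q^X)^*I_r)]$ and $h := \height(\vv_r)$ satisfies $\vv_r^h = \vv_r \cup \dots \cup \vv_r \neq 0$, then $\secat(q^X) \geq h$, which is precisely the assertion. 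Since $\vv_r$ has degree $1$, reduced and unreduced cohomology agree, so no reduction issue arises.

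The one substantive point is therefore that $\vv_r$ is annihilated by $(q^X)^*$, and I would deduce this from the aspherical case by naturality. Let $K$ be a finite $K(\pi,1)$ and $f_X\colon X \to K$ a classifying map of the universal cover, so that $f_X^r\colon X^r \to K^r$ classifies the universal cover of $X^r$ and, by Corollary \ref{CorCanonicalPull}, $\vv_r = (f_X^r)^*(\vv_r^K)$. Writing $q^K\colon \widehat{K^r} \to K^r$ for the covering associated with $\Delta_r \subset \pi^r$ and $\Psi\colon \widehat{X}^r \to \widehat{K^r}$ for the induced map of covers, the square relating $q^X$ and $q^K$ over $f_X^r$ commutes; indeed it is the $k=0$ instance of the pullback squares \eqref{EqPullbackDiag}. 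Hence
\[
(q^X)^*\vv_r = (q^X)^*(f_X^r)^*(\vv_r^K) = \Psi^*(q^K)^*(\vv_r^K),
\]
and it remains to check $(q^K)^*(\vv_r^K) = 0$. By Proposition \ref{PropCanonBerstein}, $\vv_r^K = \psi_*(\omega)$ with $\omega \in H^1(\pi^r;J)$ the Berstein--Schwarz class of $\pi^r$ relative to $\Delta_r$; by \cite[Lemma 2.6]{BCE22} (the fact used in Proposition \ref{PropSecatOmega}) we have $\omega \in \ker[\iota^*]$ for $\iota\colon \Delta_r \hookrightarrow \pi^r$, and under Remark \ref{RemarkSecatInclCovering} this restriction $\iota^*$ is exactly $(q^K)^*$. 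As $\psi$ is a $\ZG$-homomorphism, $\psi_*$ commutes with $\iota^*$, whence $(q^K)^*(\vv_r^K) = \psi_*(\iota^*\omega) = 0$ and so $(q^X)^*\vv_r = 0$.

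The hard part will be the bookkeeping with the local coefficient systems and the module identifications: one must verify that $I_r$ over $X^r$, its pullback $(q^X)^*I_r$ over $\widehat{X}^r$, and the coefficients of $\vv_r^K$ all correspond under $\psi$ and $\Psi$, so that the displayed naturality chain is a genuine equality of classes and not merely an equality up to isomorphism. I would either track these identifications explicitly through Corollary \ref{CorCanonicalPull} and Proposition \ref{PropCanonBerstein}, or bypass them via the purely obstruction-theoretic remark that $\vv_r$ is, by Proposition \ref{PropObstr} together with Proposition \ref{PropCanonAspher}.a), the homological obstruction to a section of $q^X$, and that the homological obstruction of any fibration pulled back along itself vanishes because the pulled-back fibration carries a tautological diagonal section; this yields $(q^X)^*\vv_r = 0$ directly. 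Once the kernel property is secured, the cup-length estimate of Theorem \ref{TheoremSecatProperties}.c) concludes the proof at once.
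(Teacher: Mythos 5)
Your argument is correct, but it follows a genuinely different route from the paper's. You reduce to $\secat(q^X)$ via Theorem \ref{ThmTCDgeneral}.b) and then invoke the cup-product lower bound of Theorem \ref{TheoremSecatProperties}.c) with $h=\height(\vv_r)$ copies of $\vv_r$, which obliges you to verify the kernel condition $(q^X)^*\vv_r=0$; both of your ways of doing this are sound --- the naturality chain $(q^X)^*\vv_r=\Psi^*(q^K)^*(\vv_{r,\pi})$ combined with $\omega\in\ker\iota^*$ from \cite[Lemma 2.6]{BCE22} works because $\widehat{K^r}$ is a $K(\Delta_r,1)$, so $(q^K)^*$ really is $\iota^*$ in group cohomology, and $\psi_*$ commutes with restriction since $\psi$ is a $\ZG$-isomorphism; alternatively, the tautological diagonal section of the self-pullback of $q^X$ kills the obstruction class directly, and this bypasses the coefficient bookkeeping you rightly flag. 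The paper never establishes the kernel property for the non-aspherical covering and never uses Theorem \ref{TheoremSecatProperties}.c) here; instead, given $\vv_r^k\neq 0$ it deduces $\vv_{r,\pi}^k\neq 0$ from Corollary \ref{CorCanonicalPull}, identifies $\vv_{r,\pi}^k$ --- via Proposition \ref{PropCanonAspher}.a) and \cite[Proposition 12]{Schwarz66} --- with the primary obstruction to a section of the $k$-fold fibrewise join $q^K_{k-1}$, and then uses naturality of homological obstructions in the pullback diagram \eqref{EqPullbackDiag} together with the join characterization \eqref{EqTCDinf} to conclude that $q^X_{k-1}$ has no section, i.e.\ $\TC^{\D}_r(X)\geq k$. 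What each buys: your version is shorter and rests only on the standard black-box secat bound, at the cost of the coefficient identifications (which are no worse than the paper's own ``up to a suitable isomorphism of coefficient modules'' in Corollary \ref{CorCanonicalNatural}); the paper's version stays entirely inside the join obstruction theory it needs anyway, and this same machinery is reused essentially verbatim for the maximality criterion of Theorem \ref{TheoremMaximality}, where the cup-product bound alone would not give the converse direction. A minor point in your favor: since $X$ is a finite complex, $\height(\vv_r)\leq r\dim X<\infty$, so your application of the cup-length bound is unproblematic.
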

\begin{proof}
	Let $\pi=\pi_1(X)$ and let $k \in \NN$ with $\vv_r^k \neq 0$. By Corollary \ref{CorCanonicalPull}, $\vv_r^k = ((f^r)^*(\vv_{r,\pi}))^k = (f^r)^*(\vv_{r,\pi}^k)$, which shows that $\vv^k_{r,\pi} \neq 0$ as well.  By Proposition \ref{PropCanonAspher}.a) and \cite[Proposition 12]{Schwarz66},  $\vv^k_{r,\pi}$ is the homological obstruction to the existence of a continuous section of the fibration $$q_{k-1}^K: \widetilde{K}^r \times_G E_{k-1}(\pi^{r-1}) \to K^r. $$ Since homological obstructions are preserved under pullbacks, we derive from the pullback diagram \eqref{EqPullbackDiag} that $q^X_{k-1}:\widetilde{X}^r \times_G E_{k-1}(\pi^{r-1}) \to X^r$ does not admit a continuous section as well. Thus, it follows from \eqref{EqTCDinf} that $\TC^{\D}_r(X) \geq k$, which yields the claim.
\end{proof}

\begin{remark}
\label{RemarkTCrheight}
Evidently, combining Theorem \ref{ThmTCDgeneral} with Theorem \ref{TheoremTCDlower}	yields that
$$\TC_r(X) \geq \height(\vv_r)$$
for all finite cell complexes $X$ and all $r \geq 2$. This is implicitly shown in \cite{Costa} in the case of $r=2$. However, even in the case of $r=2$ it has hitherto not been shown that the height of the canonical class actually yields a lower bound not only for $\TC(X)$, but also for $\TC^{\D}(X)$.
\end{remark}

\subsection{Coincidence of $\TC_r$ and $\TC^{\D}_r$} We want to conclude our considerations by showing that $\TC_r$ and $\TC_r^{\D}$ coincide for some non-aspherical cell complexes under an additional condition that weakens the asphericity assumption. We view our result as an analogue of a result of A. Dranishnikov from \cite{DranishLens} in which the Lusternik-Schnirelmann category $\cat(X)$ and the $1$-category $\cat_1(X)$ are related in a similar way. The proof of Dranishnikov's result relied on a result on $k$-equivalences of joins of CW complexes.

We recall that a continuous map $f \colon X \to Y$ is a $k$-equivalence if $f_\#\colon \pi_j(X) \to \pi_j(Y)$ is an
isomorphism for $j < k$ and is a surjection for $j=k$. This is equivalent to saying that the relative homotopy groups
$\pi_j(Y,X)$ vanish for $j \leq k$.

\begin{proposition}[{\cite[Proposition 5.7]{DKR}}]
\label{prop:joinequiv}
Let $s \in \NN$ and let $X_1,\dots,X_s,Y_1,\dots,Y_s$ be CW complexes. Suppose that $f_j\colon X_j \to Y_j$ is an $n_j$-equivalence for $j\in \{1,2,\ldots,s\}$. Then the induced map on joins
$$(f_1 * \cdots * f_s)\colon X_1 * \cdots * X_s \to Y_1 * \cdots * Y_s $$
is a $(\min\{n_j \ | \ j \in \{1,2,\dots,s\}\} + s -1)$-equivalence.
\end{proposition}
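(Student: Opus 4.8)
The plan is to reduce this statement about joins to the much better understood behaviour of \emph{suspensions} and \emph{smash products} of maps. Recall that $f$ is an $n$-equivalence precisely when, after replacing it by a cofibration, the associated pair is $n$-connected, equivalently when its homotopy fibre is $(n-1)$-connected. The engine of the argument is the classical natural homotopy equivalence for well-pointed spaces
$$A * B \simeq \Sigma(A \wedge B),$$
under which $f_1 * f_2$ corresponds to $\Sigma(f_1 \wedge f_2)$. Iterating this identification along the associativity of the join yields, for well-pointed $X_1,\dots,X_s$, a natural homotopy equivalence $X_1 * \cdots * X_s \simeq \Sigma^{s-1}(X_1 \wedge \cdots \wedge X_s)$ under which $f_1 * \cdots * f_s$ is identified with $\Sigma^{s-1}(f_1 \wedge \cdots \wedge f_s)$. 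Since every CW complex is well-pointed, this reduction applies in the present situation.

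With this in hand it remains to estimate the connectivity of an iterated smash of maps and of an iterated suspension of a map, which I would carry out as two independent and standard steps. \emph{First}, I would show that $f_1 \wedge \cdots \wedge f_s$ is at least a $(\min_j n_j)$-equivalence. The key observation is that smashing a fixed CW complex $W$ against an $m$-equivalence $g$ produces a map $\id_W \wedge g$ that is again at least an $m$-equivalence: building $W$ up skeleton by skeleton and using that $S^k \wedge g = \Sigma^k g$ raises connectivity by $k \geq 0$, one never loses connectivity. Factoring
$$f_1 \wedge \cdots \wedge f_s = (f_1 \wedge \id) \circ (\id \wedge f_2 \wedge \id) \circ \cdots \circ (\id \wedge f_s)$$
into a composite of such one-factor smashes, the $j$-th factor is at least an $n_j$-equivalence; since a composite of an $a$-equivalence with a $b$-equivalence is a $\min\{a,b\}$-equivalence, the smash is at least a $(\min_j n_j)$-equivalence. \emph{Second}, I would invoke the standard fact (a consequence of the Freudenthal suspension theorem, or of the relative statement that $(\Sigma B,\Sigma A)$ is $(m+1)$-connected whenever $(B,A)$ is $m$-connected) that suspension raises the connectivity of a map by exactly one, so that $\Sigma^{s-1}$ carries an $m$-equivalence to an $(m+s-1)$-equivalence.

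Combining the two steps, $\Sigma^{s-1}(f_1 \wedge \cdots \wedge f_s)$ is a $(\min_j n_j + s - 1)$-equivalence, and by the identification of the first paragraph so is $f_1 * \cdots * f_s$, which is exactly the claim. It is worth noting that the final constant depends only on the $n_j$ and not on the connectivities of the individual spaces: the smash estimate $\min_j n_j$ is the \emph{uniform} lower bound valid for arbitrary CW factors (smashing can only help), while all guaranteed gain beyond it is produced by the $s-1$ suspensions.

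The step I expect to require the most care is the naturality of the identification $X_1 * \cdots * X_s \simeq \Sigma^{s-1}(X_1 \wedge \cdots \wedge X_s)$ together with its compatibility with the maps $f_j$: the join is an \emph{unbased} construction whereas smash and suspension are \emph{based}, so one must invoke well-pointedness of CW complexes to pass between reduced and unreduced models and to track basepoints, in particular when some $X_j$ or $Y_j$ is disconnected. An alternative that sidesteps this bookkeeping is to argue inductively from the homotopy-pushout description $A * B = \mathrm{hocolim}(A \leftarrow A \times B \to B)$ and to extract the extra $+1$ per join from the Blakers--Massey theorem applied to this square; this route is conceptually cleaner but heavier in the connectivity estimates, so I would keep the suspension--smash argument as the primary line of proof.
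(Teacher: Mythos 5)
The paper itself contains no proof of this proposition: it is imported verbatim from \cite[Proposition 5.7]{DKR}, so there is no internal argument to compare yours against, and your proposal has to be judged on its own terms. On those terms it is correct and follows a standard route. The identification $X_1 * \cdots * X_s \simeq \Sigma^{s-1}(X_1\wedge\cdots\wedge X_s)$ is legitimate and natural for CW complexes (the quotient map $X*Y \to \Sigma(X\wedge Y)$ collapses the contractible subcomplex $x_0 * Y \cup X * y_0$, a cofibration), your composite law for equivalences is right, and the suspension step needs only the relative statement you mention in parentheses, not Freudenthal. Two points want tightening. First, as you yourself flag, the $f_j$ are unbased maps while smash and suspension are based: homotope each $f_j$ to a cellular map (being an $n_j$-equivalence is homotopy-invariant and the join functor preserves homotopies), so that a $0$-cell basepoint is carried to a $0$-cell and all spaces are well-pointed. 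Second, the skeleton-by-skeleton claim that $\id_W\wedge g$ ``never loses connectivity'' is the one genuinely sketchy step; the clean way to make it rigorous is the CW-model lemma: an $m$-equivalence of CW complexes may be replaced, up to homotopy equivalence, by an inclusion $X \hookrightarrow Z$ in which $Z\setminus X$ carries only cells of dimension $>m$, and smashing with $W$ then attaches to $W\wedge X$ only cells $e_W\times e_Z$ of dimension $>m$, so the pair stays $m$-connected. It is worth observing that this same lemma short-circuits the entire proof and avoids the join-versus-smash bookkeeping altogether: replace each $f_j$ by such an inclusion $X_j \hookrightarrow Z_j$; in the CW structure on the join a cell $e_1 * \cdots * e_s$ has dimension $\dim e_1 + \cdots + \dim e_s + s-1$, and every cell of $Z_1 * \cdots * Z_s$ not lying in $X_1 * \cdots * X_s$ has at least one factor of dimension $\geq \min_j n_j + 1$, hence total dimension $\geq \min_j n_j + s$; therefore the pair $\bigl(Z_1 * \cdots * Z_s,\ X_1 * \cdots * X_s\bigr)$ is $\bigl(\min\{n_j \ | \ j\} + s-1\bigr)$-connected, which is exactly the assertion. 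This direct argument is presumably close to the one in \cite{DKR}, and it also explains your closing remark: the $+(s-1)$ is precisely the dimension shift built into the cells of an $s$-fold join, while $\min_j n_j$ is all that survives from the factors when no connectivity of the spaces themselves is assumed.
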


In general, consider an $n$-equivalence $f\colon E \to E'$ of two total spaces of fibrations of cell complexes over the same base
$B$. By comparing the long exact homotopy sequences of the fibrations, we see that $f$ induces an
$n$-equivalence of fibres $\tilde f\colon F \to F'$. Let $f_k\colon \Gamma_k(E) \to \Gamma_k(E')$
be the induced map on fibrewise joins with restriction to fibres given by $\tilde f_k\colon *^{k+1} F \to
*^{k+1} F'$. By Proposition \ref{prop:joinequiv}, $\tilde f_k$ is an $(n+k+1-1)=(n+k)$-equivalence. 

This in turn implies that $f_k$ itself is an $(n+k)$-equivalence by again comparing long exact homotopy sequences. In fact, if $f\colon E \to E'$
is a map of fibrations over $B$ and the induced map of fibres is an $n$-equivalence, then so is $f$. With this in mind, we will tackle the proof of the following result.

\begin{theorem}\label{thm:TCDequalTC}
Let $k,r \in \NN$ with $r \geq 2$ and suppose $X$ is an $n$-dimensional CW complex whose universal cover $\widetilde X$ is $(rn-k)$-connected. 
\begin{enumerate}[a)]
\item If $\TC_r^{\D}(X)\geq k$, then $\TC_r(X)=\TC_r^{\D}(X)$.
	\item If $\TC_r^{\D}(X) \leq k$, then $\TC_r(X) \leq k$ as well.
\end{enumerate}
\end{theorem}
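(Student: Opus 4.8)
The plan is to compare the two fibrations $p_r\colon PX \to X^r$ and $q^X\colon \widehat{X}^r \to X^r$ directly through the map $\phi_X$ of diagram \eqref{EqFTYdiag}, using the fibrewise join description of sectional category together with the connectivity bookkeeping set up in the paragraphs preceding the theorem. First I would record the relevant dimensions and connectivities. Since $\dim X = n$, we have $\dim X^r = rn$, and since the universal cover $\widetilde X$ is $(rn-k)$-connected, the relation $\pi_i(\Omega X) = \pi_{i+1}(X)$ shows that $\pi_i(\Omega X)=0$ for $1 \le i \le rn-k-1$, while $\pi_0(\Omega X) = \pi$. Hence the map $\Omega X \to \pi$ collapsing path components is an $(rn-k)$-equivalence, and therefore so is the fibre map $(\Omega X)^{r-1} \to \pi^{r-1}$ of $\phi_X$, being a finite product of $(rn-k)$-equivalences. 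As $\phi_X$ is a map of fibrations over $X^r$ whose induced fibre map is an $(rn-k)$-equivalence, the principle recalled just before the theorem gives that $\phi_X$ itself is an $(rn-k)$-equivalence.

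Next I would invoke Schwarz's join characterization of sectional category. Writing $\Gamma_j(p)$ for the fibrewise join of $j+1$ copies of a fibration $p$ over $X^r$ (as in the discussion before the theorem), one has $\secat(p) \le j$ if and only if $\Gamma_j(p)$ admits a section; for fibrations, a homotopy section can always be rectified to a genuine section via the homotopy lifting property. The map $\phi_X$ induces a fibrewise map $\Gamma_j(\phi_X)\colon \Gamma_j(PX) \to \Gamma_j(\widehat{X}^r)$, and by Proposition \ref{prop:joinequiv} together with the comparison of long exact homotopy sequences, $\Gamma_j(\phi_X)$ is an $((rn-k)+j)$-equivalence. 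The lifting input I would use is standard obstruction theory: if $g\colon E \to E'$ is an $N$-equivalence and $B$ is a CW complex with $\dim B \le N$, then every map $B \to E'$ lifts through $g$ up to homotopy, since the homotopy fibre of $g$ is $(N-1)$-connected, so all obstructions in $H^{i+1}(B;\pi_i(\mathrm{hofib}\,g))$ vanish (the coefficients are zero for $i \le N-1$, and $H^{i+1}(B)=0$ for $i \ge N \ge \dim B$).

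For part a), set $d := \TC^{\D}_r(X) = \secat(q^X)$ and assume $d \ge k$. Then $\Gamma_d(q^X)$ admits a section $s$. Since $\Gamma_d(\phi_X)$ is an $((rn-k)+d)$-equivalence and $(rn-k)+d \ge rn = \dim X^r$ precisely because $d \ge k$, the lifting lemma produces $s'\colon X^r \to \Gamma_d(PX)$ with $\Gamma_d(\phi_X)\circ s' \simeq s$. As $\Gamma_d(\phi_X)$ commutes with the projections to $X^r$, this $s'$ is a homotopy section of $\Gamma_d(p_r)$, hence a genuine section, so $\TC_r(X) = \secat(p_r) \le d = \TC^{\D}_r(X)$. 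The reverse inequality $\TC_r(X) \ge \TC^{\D}_r(X)$ is Theorem \ref{ThmTCDgeneral}.a), and equality follows. For part b), assume $\TC^{\D}_r(X) \le k$; then $\Gamma_k(q^X)$ admits a section (enlarging the number of join factors preserves sections). Now $\Gamma_k(\phi_X)$ is exactly an $rn$-equivalence and $\dim X^r = rn$, so the identical lifting argument yields a section of $\Gamma_k(p_r)$ and hence $\TC_r(X) \le k$.

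I expect the main obstacle to be the bookkeeping at the boundary of the dimension range rather than any single deep step: the whole argument hinges on the equivalence degree $(rn-k)+j$ of $\Gamma_j(\phi_X)$ being at least $\dim X^r = rn$, which forces $j \ge k$ and thereby explains both why part a) requires $\TC^{\D}_r(X) \ge k$ and why part b) must use exactly $j=k$. Care is also needed to verify that the homotopy section produced by the compression argument can be rectified to an honest section of the fibration $\Gamma_j(p_r)$, and that the fibre map of $\phi_X$ is genuinely the product of component-collapsing maps so that its connectivity is computed correctly.
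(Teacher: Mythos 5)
Your proposal is correct and follows essentially the same route as the paper's proof: the fibre map $(\Omega X)^{r-1}\to\pi^{r-1}$ of $\phi_X$ is an $(rn-k)$-equivalence, Proposition \ref{prop:joinequiv} upgrades this to an $((rn-k)+j)$-equivalence of fibrewise joins, and obstruction theory over the $rn$-dimensional base $X^r$ lifts the section of $\Gamma_j(q^X)$ to one of $\Gamma_j(p_r)$, with Theorem \ref{ThmTCDgeneral}.a) supplying the reverse inequality in part a) and naturality of joins supplying part b). The only cosmetic difference is that you run the argument uniformly (which works), whereas the paper first disposes of the case $k\leq (r-1)n$ by noting $\widetilde X$ is then contractible and citing Theorem \ref{rthm4}.
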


\begin{proof}
\begin{enumerate}[a)]
	\item We first note that if $k \leq (r-1)n$, then the assumption that $\widetilde X$ is $(rn-k)$-connected implies that
$\widetilde X$ is contractible, since it is $n$-dimensional and simply connected. Hence, $X$ is of type $K(\pi,1)$ and the claim follows from Theorem \ref{rthm4}.

We therefore consider the case of $k > (r-1)n$. Suppose $\TC^{\D}_r(X)=m \geq k$ and consider the diagram \eqref{EqFTYdiag}. Since $\widetilde X$ is $(rn-k)$-connected, we see that map induced by $\phi_X:PX \to \widehat{X^r}$ on fibres $(\Omega X)^{r-1} \to \pi^{r-1}$ is an $(rn-k)$-equivalence. As explained above, this yields that $\phi_X$ is an $(rn-k)$-equivalence as well. Therefore, by the discussion above, the induced map on fibrewise joins
$$\phi_m \colon \Gamma_m(p) \to \Gamma_m(q)$$
is an $(rn-k+m=rn+(m-k))$-equivalence with $m-k\geq 0$. By Theorem \ref{ThmTCDgeneral}.b), it holds that $\secat(q)=\TC_r^{\D}(X)=m$, so $q_m$ admits a continuous section $X^r \to \Gamma_m(q)$. The obstructions to lifting this section along $\phi_X$ to $\Gamma_m(p)$ lie in the groups $H^{i}(X^r;\pi_{i-1}(\mathcal{F}))$, where $\mathcal{F}$ is the homotopy fibre of $\phi_m$. Because $\phi_m$ is an $(rn+(m-k))$-equivalence, we know that $\pi_{i-1}(\mathcal{F})=\pi_i(\Gamma_m(q),\Gamma_m(p))=\{0\}$ for
$i \leq rn+(m-k)$. Since $X$ is assumed to be $n$-dimensional, $X^r$ is $rn$-dimensional, so all of the obstructions 
vanish and there is a section $X^r \to \Gamma_m(p)$. Hence, we have $\TC_r(X) \leq m = \TC^{\D}_r(X)$. Together with Theorem \ref{ThmTCDgeneral}.a), this shows the desired equality. 

\item If $\TC_r^{\D}(X) = m \leq k$, then the naturality of fibrewise joins implies that there is a section of $\Gamma_k(q) \to X^r$ 
induced by the section of $\Gamma_m(q) \to X^r$ and the natural maps $\Gamma_m(q) \to \Gamma_k(q)$ over $X^r$.
In this case we may apply the argument above verbatim to obtain a section of $\Gamma_k(p) \to X^r$, showing that $\TC_r(X) \leq k$.
\end{enumerate}
\end{proof}

The previous discussion has several interesting consequences.

\begin{corollary}
\label{CorTCDmax}
Suppose $X$ is a connected $n$-dimensional CW complex and $\TC_r(X) =rn$. Then 
$\TC_r^{\D}(X) = rn$ as well.
\end{corollary}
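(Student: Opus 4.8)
The plan is to sandwich $\TC_r^{\D}(X)$ between $rn$ from both sides. The upper bound is immediate: by Theorem \ref{ThmTCDgeneral}.a) one always has $\TC_r^{\D}(X) \leq \TC_r(X) = rn$, so the whole content lies in the reverse inequality $\TC_r^{\D}(X) \geq rn$.

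For the lower bound, the key observation is that the connectivity hypothesis of Theorem \ref{thm:TCDequalTC}.b) becomes vacuous for the correct choice of $k$. I would set $k = rn - 1$. Then $rn - k = 1$, so the requirement that the universal cover $\widetilde X$ be $(rn-k)$-connected reduces to $\widetilde X$ being $1$-connected, which holds automatically, since a universal cover is always simply connected. (For $n \geq 1$ and $r \geq 2$ one has $rn \geq 2$, so $k = rn - 1 \geq 1$ is a legitimate index; the degenerate case $n=0$, where $X$ is contractible and both invariants vanish, is disposed of separately and trivially.) Thus Theorem \ref{thm:TCDequalTC}.b) applies to $X$ with this value of $k$ without any extra assumption.

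With this $k$ fixed, I invoke the contrapositive of Theorem \ref{thm:TCDequalTC}.b), which states that $\TC_r^{\D}(X) \leq k$ forces $\TC_r(X) \leq k$. Since we are given $\TC_r(X) = rn = k+1 > k$, the hypothesis $\TC_r^{\D}(X) \leq k$ cannot hold; hence $\TC_r^{\D}(X) \geq k+1 = rn$. Combining this with the upper bound yields $\TC_r^{\D}(X) = rn$, as claimed.

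There is no genuinely hard step; the argument is a clean application of the machinery already assembled in Theorem \ref{thm:TCDequalTC}. The only point requiring care is recognizing that the maximal value $rn$ of $\TC_r$ pairs precisely with the index $k = rn-1$ for which the connectivity demand on $\widetilde X$ is automatically met, thereby sidestepping any nontrivial topological hypothesis. A secondary bookkeeping point is to confirm $k = rn-1 \in \NN$ and to treat the trivial $n=0$ case on its own.
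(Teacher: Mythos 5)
Your proposal is correct and takes essentially the same route as the paper's own proof: both set $k = rn-1$ in Theorem \ref{thm:TCDequalTC}, observe that the required $(rn-k)=1$-connectivity of $\widetilde X$ holds automatically for a universal cover, and then apply the contrapositive of part b) together with the upper bound $\TC_r^{\D}(X) \leq \TC_r(X)$ from Theorem \ref{ThmTCDgeneral}.a) to conclude $\TC_r^{\D}(X) = rn$. Your only addition is the explicit (and trivial) disposal of the $n=0$ case, which the paper leaves implicit.
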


\begin{proof}
The universal cover $\widetilde X$ is $1$-connected, so take $k=rn-1$ in Theorem \ref{thm:TCDequalTC}. 
By part b) of that theorem, if $\TC_r^{\D}(X) \leq rn-1$, then the same would be true for $\TC_r(X)$, so
$\TC_r^{\D}(X) > rn-1$. Hence, $\TC_r^{\D}(X)=rn$.
\end{proof}

\begin{corollary}
	Suppose $X$ is a connected $n$-dimensional CW complex and that the universal cover $\widetilde X$ is $(rn-k)$-connected with
	$\TC_r^{\D}(X) \geq k$ and $\cd(\pi) \leq rn - k +1$. Then $\TC_r(X) = \TC_r(\pi)$.
\end{corollary}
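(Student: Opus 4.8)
The plan is to derive this corollary directly from the two results established immediately above, namely Lemma \ref{LemmaTCDconn} and part a) of Theorem \ref{thm:TCDequalTC}. The crucial bookkeeping observation is that the connectivity hypothesis ``$\widetilde{X}$ is $(rn-k)$-connected'' may be rewritten as ``$\widetilde{X}$ is $(k'-1)$-connected'' upon setting $k' := rn-k+1$.

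First I would apply Lemma \ref{LemmaTCDconn} with this value $k' = rn-k+1$. Its hypotheses demand exactly that the universal cover of $X$ be $(k'-1)$-connected, which is our connectivity assumption after the rewriting, and that $\cd(\pi) \leq k'$, which is precisely the standing hypothesis $\cd(\pi) \leq rn-k+1$. The lemma then yields
$$\TC_r^{\D}(X) = \TC_r(\pi).$$
Next I would invoke Theorem \ref{thm:TCDequalTC}.a), whose hypotheses are that $X$ is $n$-dimensional, that $\widetilde{X}$ is $(rn-k)$-connected, and that $\TC_r^{\D}(X) \geq k$; all three are assumed in the corollary. This gives $\TC_r(X) = \TC_r^{\D}(X)$. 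Chaining the two equalities produces $\TC_r(X) = \TC_r^{\D}(X) = \TC_r(\pi)$, which is the claim.

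I do not anticipate a genuine obstacle, since the statement is essentially a repackaging of the two preceding results. The only subtlety is the off-by-one matching of the connectivity index $rn-k$ with the $k'-1$ appearing in Lemma \ref{LemmaTCDconn}; this forces the choice $k' = rn-k+1$ and explains the appearance of the $+1$ in the hypothesis $\cd(\pi) \leq rn-k+1$. For internal consistency one may also observe that the hypotheses force $rn \geq k$, as otherwise $\cd(\pi) \leq rn-k+1 \leq 0$ would render $\pi$ trivial, whence $\TC_r^{\D}(X) = 0$, contradicting $\TC_r^{\D}(X) \geq k$ when $k \geq 1$.
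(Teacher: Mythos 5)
Your proof is correct and follows exactly the paper's own argument: the paper likewise combines Theorem \ref{thm:TCDequalTC}.a), giving $\TC_r(X)=\TC_r^{\D}(X)$, with Lemma \ref{LemmaTCDconn} applied with the reindexed value $rn-k+1$, giving $\TC_r^{\D}(X)=\TC_r(\pi)$. Your explicit handling of the off-by-one index matching (and the consistency check that $rn \geq k$) is a welcome detail that the paper leaves implicit.
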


\begin{proof}
	This follows from combining Theorem \ref{thm:TCDequalTC}, which yields that $\TC_r(X) = \TC_r^{\D}(X)$ in this case, with Lemma \ref{LemmaTCDconn}.\end{proof}
	
The following result gives a cohomological characterization of the maximality of sequential topological complexities. It generalizes the Costa-Farber theorem \cite[Theorem 7]{Costa} dealing with the case of $r=2$.

\begin{theorem}
\label{TheoremMaximality}
Let $n,r \in \NN$ with $r \geq 2$ and let $X$ be an $n$-dimensional CW complex. It holds that $\TC_r(X) = r  n$ if and only if $\vv_r^{r\cdot n} \neq 0$, where $\vv_r$ denotes the canonical class of $X$.
\end{theorem}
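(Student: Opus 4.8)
The plan is to sandwich $\TC_r(X)$ between the dimension of $X^r$ and the height of the canonical class, deducing one implication from the cohomological lower bound and the other, harder one, from obstruction theory for fibrewise joins. Write $N:=rn=\dim(X^r)$. For every $n$-dimensional complex one has the universal upper bound
$$\TC_r(X)=\secat(p_r)\le\cat(X^r)\le\dim(X^r)=rn,$$
using Theorem \ref{TheoremSecatProperties}.a) and the standard estimate $\cat\le\dim$ for CW complexes. This immediately settles sufficiency: if $\vv_r^{r\cdot n}\neq0$ then $\height(\vv_r)\ge rn$, so Remark \ref{RemarkTCrheight} gives $\TC_r(X)\ge\height(\vv_r)\ge rn$, and combining with the upper bound yields $\TC_r(X)=rn$.

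For necessity, suppose $\TC_r(X)=rn$. First I would pass to the $\D$-topological complexity: by Corollary \ref{CorTCDmax} one obtains $\TC^{\D}_r(X)=rn$. By the reformulation \eqref{EqTCDinf}, this means precisely that the fibrewise join fibration
$$q_{rn-1}\colon \widetilde{X}^r\times_G E_{rn-1}(\pi^{r-1})\to X^r$$
of $rn$ copies of the covering $q^X$ admits \emph{no} continuous section. The goal is then to show that the sole obstruction to such a section is exactly $\vv_r^{r\cdot n}$, whence its non-vanishing follows at once.

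To identify this obstruction I would carry out the standard connectivity-versus-dimension bookkeeping. The fibre of $q_{rn-1}$ is $F:=E_{rn-1}(\pi^{r-1})$, the $rn$-fold join of the discrete set $\pi^{r-1}$; iterating the connectivity estimate for joins shows that $F$ is $(rn-2)$-connected, and the Hurewicz theorem together with the Künneth formula for joins identifies $\pi_{rn-1}(F)\cong\widetilde{H}_{rn-1}(F)\cong I_r^{rn}$ as $\ZG$-modules. Consequently a section exists over the $(rn-1)$-skeleton of $X^r$ automatically, and since $\dim(X^r)=rn$ the only obstruction to extending it over all of $X^r$ is a single primary class in $H^{rn}(X^r;I_r^{rn})$. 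By Proposition \ref{PropObstr} (and its aspherical incarnation Proposition \ref{PropCanonAspher}.a)), the canonical class $\vv_r$ is the homological primary obstruction of the covering $q^X$ itself, and by \cite[Proposition 12]{Schwarz66} the homological obstruction of the $rn$-fold fibrewise join is the $rn$-th power of that primary obstruction; hence this complete obstruction equals $\vv_r^{r\cdot n}$. As $q_{rn-1}$ has no section, we conclude $\vv_r^{r\cdot n}\neq0$.

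The hard part will be making this last identification watertight: one must argue not merely that $\vv_r^{r\cdot n}$ is \emph{an} obstruction (which is what the proof of Theorem \ref{TheoremTCDlower} already exploits) but that it is the \emph{complete} obstruction, so that its vanishing is equivalent to—rather than only necessary for—the existence of a section. This is where the precise numerology is essential: the $(rn-2)$-connectivity of the join fibre annihilates all potential lower obstructions, while $\dim(X^r)=rn$ annihilates all higher ones, isolating the single class in degree $rn$ as the complete obstruction. Verifying that Schwarz's homological obstruction coincides with the homotopical primary obstruction in exactly this range, via the Hurewicz isomorphism $\pi_{rn-1}(F)\cong\widetilde{H}_{rn-1}(F)$, is the technical heart of the argument.
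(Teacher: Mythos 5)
Your proposal is correct and takes essentially the same route as the paper's proof: sufficiency via Remark \ref{RemarkTCrheight} together with the dimensional upper bound, and necessity by passing to $\TC^{\D}_r$ via Corollary \ref{CorTCDmax}, invoking \eqref{EqTCDinf} to conclude that $q_{rn-1}$ has no section, and identifying $\vv_r^{r\cdot n}$ as the complete obstruction using the $(rn-2)$-connectivity of the join fibre together with Proposition \ref{PropObstr} and \cite[Proposition 12]{Schwarz66}. The Hurewicz/K\"unneth bookkeeping you single out as the technical heart is precisely what the paper handles implicitly through the connectivity of the $rn$-fold join, so your write-up matches the paper's argument step for step.
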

\begin{proof}
	If $\vv_r^{r  n}\neq 0$, then we derive from Remark \ref{RemarkTCrheight} that $\TC_r(X) \geq r  n$. The converse inequality follows from the standard upper bound of Theorem \ref{TheoremSecatProperties}.a), yielding that $\TC_r(X) \leq \dim (X^r) = rn$.
	
	Conversely, suppose that $\TC_r(X) = r  n$. Then Corollary \ref{CorTCDmax} yields that $\TC^{\D}_r(X)=rn$, so that by \eqref{EqTCDinf}, the fibrewise join $q_{rn-1}$ does not admit a continuous section. By Proposition \ref{PropObstr}.a) and \cite[Proposition 12]{Schwarz66}, the class $\vv_r^{rn}$ is the primary obstruction to a continuous section of $q_{rn-1}$. As previously discussed, the obstruction classes for the existence of continuous sections of $q_{rn-1}$ lie in the groups $H^i(X^r; \pi_{i-1}(E_{rn-1}(\pi)))$, for $1 \leq i \leq rn$. Recall, however, that the space $E_{rn-1}(\pi)$ is the $(rn)$-fold join $$ E_{rn-1}(\pi) = \pi^{*rn} $$ and consequently is a $(rn-2)$-connected space, which implies that all but the primary obstruction necessarily vanish. Therefore, $\vv_r^{r  n} \neq 0$.
\end{proof}

\begin{corollary}
\label{CorAbelian}
	Let $X$ be a connected $n$-dimensional finite CW complex, where $n \in \NN$, whose fundamental group is free abelian of rank at most $n$. Then 
	$$ \TC_r(X) < rn \qquad \forall r \geq 2. $$
\end{corollary}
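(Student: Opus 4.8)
The plan is to invoke the cohomological characterization of maximality from Theorem \ref{TheoremMaximality}. Since $X$ is $n$-dimensional, the standard upper bound (Theorem \ref{TheoremSecatProperties}.a) gives $\TC_r(X) \leq \dim(X^r) = rn$ for every $r \geq 2$, so it suffices to rule out equality. By Theorem \ref{TheoremMaximality}, $\TC_r(X) = rn$ holds if and only if $\vv_r^{rn} \neq 0$, where $\vv_r \in H^1(X; I_r)$ is the $r$-th canonical class. Hence the entire task reduces to proving that $\vv_r^{rn} = 0$.

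To compute this power I would pass to the aspherical model. Write $\pi := \pi_1(X) \cong \Z^m$ with $m \leq n$, let $f_X \colon X \to K(\pi,1)$ be a classifying map of the universal cover, and put $G := \pi^r$. By Corollary \ref{CorCanonicalPull} one has $\vv_r^{rn} = (f_X^r)^*(\vv_{r,\pi}^{rn})$, so it is enough to show that the aspherical canonical class satisfies $\vv_{r,\pi}^{rn} = 0$. By Proposition \ref{PropCanonBerstein}, $\vv_{r,\pi} = \psi_*(\omega)$, where $\omega \in H^1(G; J)$ is the Berstein--Schwarz class of $G = \pi^r$ relative to the diagonal $\Delta_r$ and $\psi$ is an isomorphism of $\ZG$-modules. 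Consequently $\vv_{r,\pi}^{rn} = (\psi^{\otimes rn})_*(\omega^{rn})$, and since $\psi^{\otimes rn}$ is again an isomorphism on coefficients, the whole problem collapses to establishing $\omega^{rn} = 0$, i.e. $\height(\omega) < rn$.

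The input that supplies this is the exact value of the sequential TC of a free abelian group. Because $\pi$ is free abelian, $\Delta_r$ is a \emph{normal} subgroup of $\pi^r$ with $\pi^r/\Delta_r \cong \pi^{r-1}$; thus Remark \ref{RemarkTCr}(3) (via Theorem \ref{TheoremSecatNormal}) yields $\secat(\Delta_r \hookrightarrow \pi^r) = \TC_r(\pi) = (r-1)m$. Proposition \ref{PropSecatOmega} then gives $\height(\omega) \leq \secat(\Delta_r \hookrightarrow \pi^r) = (r-1)m \leq (r-1)n < rn$, using $m \leq n$ and $n \geq 1$. Therefore $\omega^{rn} = 0$, which forces $\vv_{r,\pi}^{rn} = 0$, hence $\vv_r^{rn} = 0$, and by Theorem \ref{TheoremMaximality} we conclude $\TC_r(X) < rn$.

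The main obstacle — and the reason the rank hypothesis is needed rather than a crude cohomological-dimension count — is the boundary case $m = n$. There $\cd(\pi^r) = rn$, so $H^{rn}(\pi^r; -)$ need not vanish and one cannot deduce $\omega^{rn} = 0$ from dimension considerations alone. It is precisely the sharp equality $\TC_r(\Z^m) = (r-1)m$, i.e. the collapse of $\secat(\Delta_r \hookrightarrow \pi^r)$ below the cohomological dimension of $\pi^r$, that pushes $\height(\omega)$ strictly beneath $rn$ even in top degree; verifying that this bound on $\height(\omega)$ transfers cleanly through the isomorphism $\psi$ and the pullback $(f_X^r)^*$ is the only other point requiring care.
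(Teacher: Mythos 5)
Your proof is correct and follows essentially the same route as the paper: both reduce the claim via Theorem \ref{TheoremMaximality} and the naturality statement of Corollary \ref{CorCanonicalPull} to the aspherical case, where the computation $\TC_r(\Z^m) = (r-1)m$ from Remark \ref{RemarkTCr}.(3) kills the $rn$-th power of the canonical class. The only (cosmetic) difference is that you derive $\vv_{r,\pi}^{rn}=0$ by unwinding through Proposition \ref{PropCanonBerstein} and the height bound of Proposition \ref{PropSecatOmega} --- i.e.\ you re-derive Corollary \ref{CorSeqTCCanonical}.a) inline --- whereas the paper simply cites Theorem \ref{TheoremMaximality} a second time on the aspherical model; these are the same estimate packaged differently.
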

\begin{proof}
By the hypothesis on $\pi_1(X)$ and Remark \ref{RemarkTCr}.(3), it holds that $\TC_r(\pi_1(X))<rn$, so that $\vv_{r,\pi_1(X)}^{rn}= 0$ by Theorem \ref{TheoremMaximality}. We derive from Corollary \ref{CorCanonicalPull}, that $\vv_r^{rn} = 0$ as well, where $\vv_r$ denotes the $r$-th canonical class of $X$. Therefore, the claim follows from Theorem \ref{TheoremMaximality}. 
\end{proof}

\begin{remark}
The non-maximality of topological complexity for closed manifolds with abelian fundamental groups was investigated by D. Cohen and L. Vandembroucq in \cite{CohVan}. While most of their results involve fundamental groups with torsion, see \cite[Theorem 1.2.(2a)]{CohVan} for a result similar to Corollary \ref{CorAbelian}.
\end{remark}

\bibliography{bibliography}{}
\bibliographystyle{plain}

\end{document}